\newcommand{\be}{\begin{equation}}
\newcommand{\ee}{\end{equation}}
\newcommand{\bea}{\begin{eqnarray}}
\newcommand{\eea}{\end{eqnarray}}
\newcommand{\bu}{\mathbf u}
\newcommand{\bv}{\mathbf v}
\newcommand{\bw}{\mathbf w}
\newcommand{\bx}{\mathbf x}
\newcommand{\f}{\mathbf f}
\newcommand{\rw}{\textrm{\rm w}}
\newcommand{\rb}{\textrm{\rm b}}
\newcommand{\rc}{\textrm{\rm c}}
\newcommand{\rloc}{\textrm{\rm loc}}
\newcommand{\KU}{\Pi_{t_0}^{-1}K\cap\mathcal{U}}
\newcommand{\KUI}{\Pi_{t_0}^{-1}K\cap\mathcal{U}_I}
\newcommand{\mM}{\mathcal M}
\newcommand{\mX}{\mathcal X}
\newcommand{\mA}{\mathcal A}
\newcommand{\mC}{\mathcal C}
\newcommand{\mK}{\mathcal K}
\newcommand{\mP}{\mathcal P}
\newcommand{\mS}{\mathcal S}
\newcommand{\mY}{\mathcal Y}
\newcommand{\mU}{\mathcal U}
\newcommand{\mV}{\mathcal V}
\newcommand{\mZ}{\mathcal Z}
\newcommand{\F}{\mathbf F}
\newcommand{\fB}{\mathfrak{B}}
\newcommand{\fK}{\mathfrak{K}}
\newcommand{\fF}{\mathfrak{F}}
\newcommand{\fL}{\mathfrak{L}}
\newcommand{\wsconv}{\stackrel{w*}{\rightharpoonup}}
\newcommand{\wconv}{\stackrel{wsc*}{\rightharpoonup}}
\newcommand{\rd}{{\text{\rm d}}}
\newcommand{\Cloc}{\mathcal{C}_{\rloc}}
\newcommand{\MX}{\mathcal{M}(X)}
\newcommand{\PX}{\mathcal{P}(X)}
\newcommand{\MXt}{\mathcal{M}(X,\text{tight})}
\newcommand{\PXt}{\mathcal{P}(X,\text{tight})}
\theoremstyle{plain}
\newtheorem{thm}{Theorem}[section]
\newtheorem{lem}{Lemma}[section]
\newtheorem{prop}{Proposition}[section]
\newtheorem{cor}{Corollary}[section]
\newtheorem{defs}{Definition}[section]
\theoremstyle{definition}
\newtheorem{rmk}{Remark}[section]
\newtheorem{prob}{Problem}[section]
\newcommand{\comment}[1]{\\\indent\textcolor{red}{\framebox{\parbox{0.9\textwidth}{\textbf{#1}}}}\\}
\renewcommand{\comment}[1]{}
\begin{document}

\title{Abstract Framework for the Theory of Statistical Solutions}
\author{Anne C. Bronzi}
\author{Cecilia F. Mondaini}
\author{Ricardo M. S. Rosa}
\address[A. C. Bronzi]{Instituto de Matem\'atica, Estat\'{\i}stica e Computa\c{c}\~ao Cient\'{\i}fica, Universidade Estadual de Campinas, Campinas, São Paulo, 13083-859, Brazil}
\address[C. F. Mondaini]{Instituto de Matem\'atica, Universidade Federal do Rio de Janeiro, Caixa Postal 68530 Ilha do Fund\~ao, Rio de Janeiro, RJ 21941-909,
  Brazil.}
\address[R. M. S. Rosa]{Instituto de Matem\'atica, Universidade Federal do Rio de Janeiro, Caixa Postal 68530, Ilha do Fund\~ao, Rio de Janeiro, RJ 21941-909,
  Brazil.}

\email[A. C. Bronzi]{annebronzi@ime.unicamp.br}
\email[C. F. Mondaini]{cfmondaini@gmail.com}
\email[R. M. S. Rosa]{rrosa@im.ufrj.br}

\begin{abstract}
An abstract framework for the theory of statistical solutions is developed for general evolution equations, extending the theory initially developed for the three-dimensional incompressible Navier-Stokes equations. The motivation for this concept is to model the evolution of uncertainties on the initial conditions for systems which have global solutions that are not known to be unique. Both concepts of statistical solution in trajectory space and in phase space are given, and the corresponding results of existence of statistical solution for the associated initial value problems are proved. The wide applicability of the theory is illustrated with the very incompressible Navier-Stokes equations, a reaction-diffusion equation, and a nonlinear wave equation, all displaying the property of global existence of weak solutions without a known result of global uniqueness.
\end{abstract}

\subjclass[2010]{Primary: 76D06, 35Q30, 35Q35; Secondary: 60B05, 35Q99}
\keywords{statistical solutions, trajectory statistical solutions, Navier-Stokes equations}

\maketitle

\section{Introduction}

The concept of statistical solution was introduced for the study of turbulence in incompressible Newtonian fluid flows. In a turbulent flow, most relevant physical quantities (e.g. velocity, kinetic energy, energy dissipation) display a wild variation in space and time, while displaying a more orderly behavior when averaged in space or time (see e.g. \cite{Taylor35, Batchelor1953, Hinze1975, MoninYaglom1975, Frisch1995, Lesieur1997}). This behavior appears, in fact, for different realizations of the flow, with somehow ``universal'' properties, so that one is led to consider averages with respect to an ensemble of flows, in an attempt to capture common properties of turbulent flows.

One can work with ensemble average in a formal sense, without worrying about the regularity of the solutions of the system, or one can be more strict and work with some notion of weak solution of the system for which existence results for the corresponding initial value problem are available. The statistical solution were introduced exactly with this later purpose in mind: they have been defined to model, in a rigorous way, the evolution of ensembles of weak solutions of the incompressible Navier-Stokes equations, as a foundation for a rigorous treatment of turbulent flows. 

Since its inception, the theory of statistical solutions for the incompressible Navier-Stokes equations has been the basis for a growing number of rigorous results for turbulent flows (e.g. \cite{Foias74, CFM94, Bercovicietal95, Foias97, Fursikov99, FMRT01, FJMR2002, RRT08, Rosa2009}). The concept of statistical solution has also been successfully adapted to a number of other models, particularly fluid flow models, but also other types of nonlinear partial differential equations (e.g. \cite{Capinski1979, Arsenev1979b, VishikKomech1982, Kharchenko1982, Sobolev1983, Chueshov1983, Vasileva1983, PulvirentiWick1985, Chae1991, CapinskiCutland1993, ConstantinJiahong1997, Kim1998, Panov2002, ConstantinRamos2007, Kelliher2009,RamosTiti2010,BronziRosa2014}). 

In fact, the notion of ensemble average is relevant for any evolution system displaying a complicated dynamics, in which uncertainties in the initial condition are of crucial concern. The concept of ensemble averages is directly related to the evolution of a probability distribution of initial conditions. In a well-posed system, with a well-defined semigroup $\{S(t)\}_{t\geq 0}$, the evolution $\{\mu_t\}_{t\geq 0}$ of the probability distribution of the state of the system at each time $t$ is just the transport, or push-forward, $\mu_t = S(t)\mu_0$, of the initial probability measure $\mu_0$, by the semigroup (more precisely, $\mu_t(E) = \mu_0(S(t)^{-1}E)$, for any Borel subset $E$ of the phase space). The difficulty is to extend this definition to obtain the distributions $\mu_t$ for systems in which $\{S(t)\}_{t\geq 0}$ might not be defined. This was the aim of the concept of statistical solution in the particular and fundamental case of the three-dimensional incompressible Navier-Stokes equations.

Two main definitions of statistical solutions have been introduced in the 1970's. First, Foias (\cite{Foias72, Foias73}), in works stemmed from discussions with Prodi (see e.g. \cite{FoiasProdi76}), introduced the concept of statistical solution in phase space, consisting of a family of measures on the phase space of the Navier-Stokes system, parametrized by the time variable, and representing the evolution of the probability distribution of the state of the system. Then, Vishik and Fursikov (\cite{VF78, VF88}) introduced the notion of a space-time statistical solution, which is that of a single measure defined on the space of trajectories of the system, hence encompassing both space and time variables at the same time.

Still in the mid to late 1970's, it is worth mentioning the work by Ladyzhenskaya and Vershik (\cite{LadyVershik1977}), presenting a different proof of existence of statistical solution (in phase space) using a representation theorem by Castaing (\cite{Castaing1969}), for the measurability of multivalued solution maps, to overcome the fact that the solution operator is not a continuous, single-valued map. There is also the work by Arsen'ev (\cite{Arsenev1976}) using a measurable selection argument to construct a statistical solution and, in fact, already introducing a notion of space-time statistical solution, which at first had some restrictions on the initial measure, but that were eventually relaxed (\cite{Arsenev1979}). Another proof of existence of space-time statistical solutions, based on nonstandard analysis, was given in the early 1990's, by Capinski and Cutland (see e.g. \cite{CapinskiCutland1992}). 

More recently, inspired by the definition given in \cite{VF78, VF88} and using a different approach for the construction of statistical solutions, by approximating the initial measure by convex combinations of Dirac delta functions, given in \cite{FMRT}, Foias, Rosa and Temam \cite{FRT2010} considered the idea of space-time statistical solution of Vishik and Fursikov but with slightly different hypotheses that make it more amenable to analysis. Projecting that modified space-time statistical solution to the phase space at each time yields a family of measures which is a particular type of statistical solution in phase space, hence bridging the two notions given earlier. 

This work \cite{FRT2010} inspired us to look for a more general abstract framework for the definition of statistical solutions for various types of systems and with reasonable conditions guaranteeing the existence of statistical solutions for the associated initial-value problem. The constructions developed in the previous works were very much based on the structure of the equations, but nevertheless we hoped to find a framework general enough to be applied to different types of systems. 

The first result was published in \cite{bmr2014} and was sufficiently general to apply naturally to an ample class of problems. However, there is one condition in \cite{bmr2014} which has a more involved formulation and made us feel that there should be a simpler way to attack the problem. Moreover, \cite{bmr2014} only addressed the notion of space-time statistical solution, which we call here a trajectory statistical solution or a statistical solution in trajectory space.

In our current work, we keep the same framework of \cite{bmr2014} for the trajectory statistical solutions but with a simpler set of hypotheses that avoid the more involved condition in our previous work. The proof is also simpler with this new set of hypotheses. Furthermore, we address not only statistical solutions in trajectory space but also in phase space, presenting a general setting for the definition of those two types of solutions and the corresponding theorems of existence for the associated initial value problems. The proof also significantly simplifies the known proofs for concrete examples of equations, particularly the Navier-Stokes equations.

We should mention that the concept of statistical solution had been previously extended to a few frameworks encompassing some particular classes of differential equations (e.g. \cite{IllnerWick1981, Golec1993, Barbablah1995, Haragus1999, Slawik2008}), but none of them nearly as general as done here.

The hypotheses needed for our existence results are natural and not difficult to verify, relying essentially on properties of the set of individual solutions of the system. In fact, \emph{the key ingredient in our approach is to look for topological and measure-theoretic properties of the set of solutions instead of looking at the structure of the equation.}

The abstract framework that we construct starts with a Hausdorff space $X$, an interval $I\subset\mathds{R}$, the space of continuous paths $\mX=\Cloc(I,X)$ endowed with the compact-open topology, and a subset $\mU$ of $\mX$. There is no system of equations or solution operator at this abstract level, allowing for a wide range of applications, even for evolution problems not arising directly from differential equations. Only in the applications is that those objects will be realized, with the space $X$ being interpreted as the phase space of the system; the interval $I$, as a time interval for the evolution system; the space $\mX$, as a space-time function space in which the solutions, or trajectories, of the system are included; and $\mU$, as a subset of solutions, or trajectories, of the system. In the case of partial differential equations, the set $\mU$ is usually the whole set of weak solutions of the problem or some particular subset of solutions (e.g. Leray-Hopf weak solutions or suitable weak solutions for the three-dimensional Navier-Stokes equations, viscosity solutions for evolutionary Hamilton-Jacobi equations).

In this abstract setting, a $\mU$-trajectory statistical solution or simply a trajectory statistical solution is a tight Borel probability measure $\rho$ carried by a Borel subset of $\mX$ which is included in the set $\mU$ (see Definition \ref{def-stat-sol}), i.e. 
\begin{equation}
  \mbox{there exists a Borel subset } \mV \subset \mU \mbox{ such that } \rho(\mX\setminus \mV) = 0.
\end{equation}
We are tempted to say that $\rho$ is carried by $\mU$, but at this point it is not assumed that $\mU$ is a Borel subset of $\mX$. This allows the framework to be applied to systems for which it is not known whether the subset of solutions is a Borel set in the appropriate space. Nevertheless, $\mU$ is certainly measurable with respect to the Lebesgue completion $\bar\rho$ of $\rho$, and $\bar\rho(\mU) = 1$ (see Remark \ref{urhobarmeasurable}).

The terminology of trajectory statistical solution is inspired by the notion of trajectory attractor given, for instance, in \cite{ChepVishik} (see also \cite{sell1996}), and the connection between the two will become more apparent in a future work, when we focus on the case of a stationary trajectory statistical solution.

For the initial value problem, we consider the interval $I$ as being closed and bounded on the left, with left endpoint $t_0$, and consider a tight Borel probability measure $\mu_0$, defined on the space $X$, as the initial probability distribution of the state of the system. We also consider the projection operator $\Pi_{t_0}:\mX\rightarrow X$ which takes $u$, in $\mX$, into its value $\Pi_{t_0}u = u(t_0)$, at time $t_0$. Then, the initial value problem for trajectory statistical solutions is simply to find a $\mU$-trajectory statistical solution $\rho$ such that $\Pi_{t_0}\rho =\mu_0$ (see Problem \ref{ivp_tss}).

The existence of trajectory statistical solutions for this initial value problem relies on essentially three conditions: 
(i) a set-theoretic surjective condition on the projection of $\mU$ at the initial time, which in the applications is simply a statement of existence of ``global'' solutions, over the whole interval $I$, for every initial condition in the ``phase'' space $X$; (ii) a topological compactness condition on the subset $\mU$ associated with a certain family of compact sets of initial conditions, which in applications follows from appropriate compact embedding theorems typically used for the existence of individual solutions of the system; and (iii) a joint topological and measure-theoretic property saying that the family of compact sets of initial conditions in the previous condition is sufficiently large to approximate from below any tight Borel probability measure on $X$ (see Theorem \ref{existencestatsol}).

It might happen that the set of initial conditions for the existence of individual solutions does not coincide with the space in which the continuity in time holds. In this case, the result can in fact be modified to yield statistical solutions only carried by the ``good'' set of initial conditions (see Theorems \ref{existencestatsolweaker} and  \ref{thmextensionhdoubleprimephasespace} and Remark \ref{extensionhdoubleprime}).

Next, we turn to the concept of statistical solution in phase space. At this point, we need to be more specific about the evolution problem. Hence, we look for statistical solutions of an evolution equation of the form
\begin{equation}
  \label{diffeq}
  u_t = F(t,u). 
\end{equation}
For this differential equation to make sense, we need a vector space structure. With that in mind, we start with the same setting as above and add another Hausdorff space $Z$ and a topological vector space $Y$, such that $Z\subset X \subset Y_{\rw^*}'$, with continuous injections, and where $Y_{\rw^*}'$ is the dual of $Y$ endowed with the weak star topology. We denote the duality product between $Y$ and $Y'$ by $\langle \cdot, \cdot \rangle_{Y',Y}$. We consider a function $F: I \times Z \rightarrow Y'$ and assume that $u\in \mU$ is such that $u(t)$ belongs to $Z$ for almost every $t\in I$. With this framework, the definition of statistical solution in phase space is that of a family of measures $\{\mu_t\}_{t\in I}$ which satisties \eqref{diffeq} in a suitable weak-star sense in the mean, i.e.
\begin{equation}
  \label{diffeqmean}
  \frac{\rd}{\rd t} \int_X \Phi(u)\;\rd\mu_t(u) = \int_X \langle F(t,u),\Phi'(u) \rangle_{Y',Y} \;\rd\mu_t(u),
\end{equation}
in the distribution sense on $I$, for appropriate cylindrical test functions $\Phi:Y'\rightarrow \mathds{R}$, in conjunction with some measurability and integrability properties (see Definition \ref{def-statsolphasespace}). If a statistical solution in phase space $\{\rho_t\}_{t\in I}$ is obtained as the family of projections $\rho_t=\Pi_t\rho$, $t\in I$, of a trajectory statistical solution $\rho$, we say that it is a projected statistical solution.

For the initial value problem for statistical solutions in the phase space, we first make the same assumptions as above for the subset $\mU$, hence obtaining a $\mU$-trajectory statistical solution starting with a given initial measure $\mu_0$. Then, we assume that, for every $u\in \mU$, the function $\langle u(t),v\rangle_{Y',Y}$ is absolutely continuous on $I$, for every $v\in Y$, with \eqref{diffeq} being satisfied in a weak sense, namely that
\begin{equation}
  \label{diffeqweak}
  \frac{\rd }{\rd t} \langle u(t), v\rangle_{Y',Y} = \langle F(t,u(t)), v \rangle_{Y', Y}, 
\end{equation}
in the distribution sense on $I$, for every $v\in Y$. We add measurability conditions on the spaces and on $F$ which guarantee the appropriate measurability of the Nemytskii-type operator $(t,u) \mapsto F(t,u(t))$. One measurability condition is that every Borel subset of $Z$ be also a Borel subset of $X$, which is a condition satisfied in all the examples that we are aware of (see Section \ref{subsecinjectborel}). We also add integrability conditions on the right hand side of \eqref{diffeqweak}, for each trajectory in $\mU$, and an associated integrability condition on the initial measure.  Those integrability conditions are related to a~priori estimates which are natural for the system. Then, under these conditions, we obtain that the initial value problem has a statistical solution in phase space for any tight Borel probability measure satisfying the integrability condition for the initial measure (Theorem \ref{thm-existence-projectedss}). Part of the proof of this result consists in showing that, under suitable conditions, the family $\{\rho_t\}_{t\in I}$ of the projections $\rho_t = \Pi_t\rho$, $t\in I$, of a trajectory statistical solution $\rho$ is a statistical solution in phase space (see Theorem \ref{thm_projectedss_is_ss}).

The result on the existence of a statistical solution for the associated initial value problem is complemented with a result saying essentially that any energy-type inequality (or equality) valid for the individual solutions holds also, in average, for the trajectory statistical solution and, hence, also for the projected statistical solutions (Propositions \ref{prop-energy-ineq} and \ref{prop-energy-eq}).

The proof of existence of a trajectory statistical solution for the initial value problem is based on the Krein-Milmam approximation of the initial measure by convex combinations of Dirac deltas, as done in \cite{FMRT, FRT2013}. We construct a net of trajectory statistical solutions with the measure at the initial time approximating the given initial measure. At the limit, we obtain the desired trajectory statistical solution. In order to pass to the limit, one needs a compactness result for measures. A suitable result of this kind in our abstract framework is the one developed by Topsoe along his work on a generalization of Prohorov's Theorem (\cite{Prohorov56}) to spaces which are not necessarily Polish  (\cite{Topsoe,Topsoebook,Topsoe2}).

Topsoe's topology is based on semi-continuity and is finer than the weak-star topology, hence it has less compact sets, but compactness is not the main problem here. Instead, the finer topology is needed to yield more open sets, so that two different tight measures can be separated by open sets. As a consequence, the space of tight measures on $X$ under this semi-continuity-weak-star topology is a Hausdorff space, a fact that does not hold in general for the weak-star topology. If, however, the space $X$ is also completely regular, then both topologies do coincide. (See Section \ref{subsectopformeasspaces}.)

After completing the abstract theory, we present some applications of our framework. The first and natural one is the system of Navier-Stokes equations, on which our whole abstract formulation was based (Section \ref{subsecnse}). The second one is a reaction-diffusion equation (Section \ref{subsecreacdiffeq}). The third and last example is a nonlinear hyperbolic wave equation (Section \ref{subsecnonlinearwaveeq}). In each case, the system is formulated in the abstract framework and the hypotheses that yield both trajectory statistical solution and statistical solution in phase space for the corresponding initial value problems are verified. In all these examples, the spaces $Z$ and $Y$ are separable Banach spaces. In a future work, we plan to show how the theory applies to finding space-homogeneous statistical solutions for equations such as the Navier-Stokes equations on the whole space $\mathds R^3$, in which $Y$ will be taken as the inductive limit of a sequence of separable Fr\'echet spaces. 

\section{Basic Tools}

In this section, we introduce the basic concepts underlying our results. 

\subsection{Function spaces}

When working with measures on topological spaces, the topological structure is of fundamental importance. In this regard, we recall a few concepts that play an important role in this work. Besides the fundamental notion of a \textbf{Hausdorff} space, which is a topological space in which two distinct points can be separated by disjoint open sets, we recall that a topological space is \textbf{completely regular} when every nonempty closed set and every singleton disjoint from it can be separated by a continuous function. A completely regular space in which every singleton is closed is called a \textbf{Tychonoff} space. A topological space is said to be \textbf{completely metrizable} when there exists a metric compatible with the topology of the space and under which the space is complete. A topological space is called \textbf{Polish} when it is separable and completely metrizable. Separable Banach spaces and separable Fr\'echet spaces are examples of topological vector spaces which are Polish spaces.

When $X$ is a topological vector space, we denote its dual by $X'$ and the duality product is denoted by $\langle \cdot, \cdot\rangle_{X',X}$. When $X$ is endowed with its weak topology, we denote the space by $X_\rw$. Similarly, we consider $X'$ endowed with the weak-star topology, in which case we denote it by $X_{\rw^*}'$. Notice that, for any topological vector space $X$, the space $X_{\rw^*}'$ is always a Hausdorff locally convex topological vector space (\cite[Section 1.11.1]{Edwards1965}). If $X$ is a Banach space, the norm in $X$ is denoted by $\|\cdot\|_X$, while $\|\cdot\|_{X'}$ denotes the usual operator norm in the dual space.

Let $X$ be a Hausdorff space and $I\subset \mathbb R$ an arbitrary interval. Denote by $\mC(I,X)$ the \textbf{space of continuous paths} in $X$ defined on $I$, i.e. the space of all functions $u:I\rightarrow X$ which are continuous. The \textbf{compact-open topology} in $\mC(I,X)$ is the topology generated by the subbase consisting of sets of the form
\[
S(J,U) = \{ u\in \mC(I,X) \,|\, u(J)\subset U \},
\]
where $J$ is a compact subinterval of $I$ and $U$ is an open subset of $X$. When endowed with the compact-open topology, this space is denoted by $\mX = \Cloc(I,X)$ and is a Hausdorff space.

The subscript ``\emph{\rloc}'' in $\Cloc(I,X)$ refers to the fact that this topology considers compact sets in $I$. When $X$ is a uniform space, the compact-open topology in $\Cloc(I,X)$ coincides with the \textbf{topology of uniform convergence on compact subsets} (\cite[Theorem 7.11]{Kelley75}). This holds, in particular, when $X$ is a topological vector space, which is the case in the applications that are presented in Section \ref{secapplications}.

For any $t\in I$, let $\Pi_{t}:\mX\rightarrow X$ be the ``projection'' map at time $t$ defined by
\be\label{defprojectionop}
\Pi_{t}u = u(t), \quad \forall u\in\mX.
\ee
It is readily verified that $\Pi_t$ is continuous with respect to the compact-open topology.

We also consider the space of bounded and continuous real-valued functions on $X$, denoted by $\mC_\rb(X)$. When $X$ is a subset of $\mathds R^m$, $m\in \mathds N$, we also consider the space $\mC_\rc^\infty(X)$ of infinitely differentiable real-valued functions on $X$ which are compactly supported in the interior of $X$.

\subsection{Elements of measure theory}
\label{subsecelementsofmeastheory}

Let $X$ be a topological space and $\fB_X$ denote the $\sigma$-algebra of Borel sets in $X$. We denote by $\MX$ the set of finite and nonnegative Borel measures on $X$, i.e., the set of nonnegative measures $\mu$ defined on $\fB_X$ such that $\mu(X) < \infty$. The subset of $\MX$ consisting of Borel probability measures is denoted by $\PX$. The space $\MX$ can be identified with a subset of the dual space $\mC_\rb(X)'$ of the space $\mC_\rb(X)$.

A \textbf{carrier} of a measure is any measurable subset of full measure, i.e., such that its complement has null measure. If $C$ is a carrier for a measure $\mu$, we say that $\mu$ is \textbf{carried} by $C$. If a probability measure is carried by a single point $x\in X$, then it is a \textbf{Dirac measure} and it is denoted by $\delta_x$. A probability measure that can be written as a (finite) convex combination of Dirac measures is called a \textbf{discrete measure}.

Given a family of sets $\fF\subset\fB_X$, we say that a Borel measure $\mu$ on $X$ is \textbf{inner regular with respect to the family $\fF$} if
\begin{equation}
  \label{defmureg}
  \mu(A)=\sup\{\mu(F)\,|\,F\in\fF \text{ and }F\subset A\}, \qquad \forall A \in \fB_X.
\end{equation}
We say that a Borel measure $\mu$ is \textbf{outer regular with respect to the family $\fF$} if
\begin{equation}
  \label{defmuouterreg}
  \mu(A)=\inf\{\mu(F)\,|\,F\in\fF \text{ and }A \subset F\}, \qquad \forall A \in \fB_X.
\end{equation}

A \textbf{tight} measure is a nonnegative Borel measure which is inner regular with respect to the family of compact subsets of $X$ (such a measure is also called a \textbf{Radon} measure, see \cite{Bogachev}). If a finite Borel measure $\mu$ on $X$ is both tight and outer regular with respect to the family of open sets of $X$, then we say that $\mu$ is a \textbf{regular} measure. When $X$ is a Polish space, every finite Borel measure is regular (\cite[Theorem 12.7]{AB}). In case $X$ is just a metrizable space, every finite Borel measure is inner regular with respect to the family of closed subsets of $X$ and outer regular with  respect to the family of open sets of $X$ (\cite[Theorem 12.5]{AB}) (such a measure is called \textbf{normal} in \cite{AB}).

For a compact and metrizable space $X$, it follows in particular from the result in \cite[Theorem 12.5]{AB} that every finite Borel measure is tight. The metrizability is indeed a necessary condition, since it is possible to construct a finite Borel measure defined on a certain nonmetrizable compact Hausdorff space which is not tight (see \cite[Example 12.9]{AB}).

Furthermore, a net $\{\mu_\alpha\}_\alpha$ of measures in $\MX$ is said to be \textbf{uniformly tight} if for every $\varepsilon>0$ we can find a compact set $K\subset X$ such that
\[
\mu_\alpha(X \backslash K) < \varepsilon, \quad \forall \alpha.
\]

The set of measures $\mu \in \MX$ which are tight is denoted by $\MXt$. The subset of $\MXt$ consisting of probability measures is denoted by $\PXt$.

Now consider a Hausdorff space $Y$ and let $F:X\rightarrow Y$ be a Borel measurable function. Then for every measure $\mu$ on $\fB_X$ we define a measure $F\mu$ on $\fB_Y$ by
\[
F\mu(E) = \mu(F^{-1}(E)), \quad \forall E\in\fB_Y,
\]
which is called the \textbf{induced measure from $\mu$ by $F$ on $\fB_Y$}, also known as \textbf{push-forward of $\mu$ by $F$}. When $\mu$ is a tight measure and $F$ is a continuous function, the induced measure $F\mu$ is also tight.

In regard to the concept of induced measures, if $\varphi : Y \rightarrow \mathds{R}$ is a $F\mu$-integrable function then $\varphi \circ F$ is $\mu$-integrable and
\be\label{eq00}
\int_X \varphi \circ F \rd \mu = \int_Y \varphi \rd F \mu
\ee
(see \cite[Theorem 13.46]{AB}).

For the sake of notation, if $\mu \in \MX$ and $f$ is a $\mu$-integrable function, we write
\[
\mu(f) = \int_X f \rd \mu.
\]

In the case of real numbers, we are also interested in the Lebesgue measure, which we denote by $\lambda$, and in the Lebesgue subsets of intervals $I \subset \mathds R$. We denote the $\sigma$-algebra of those sets by $\fL_I$.

\subsection{Continuous injection of Borel sets}
\label{subsecinjectborel}

In the case of statistical solutions in phase space, in the abstract framework that we consider, a fundamental property that we need concerns the continuous injection of Borel subsets of a topological space into another topological space. More precisely, when we have two topological spaces $Z$ and $X$, with $Z$ continuously injected into $X$, meaning that there exists a continuous injective map $j : Z \rightarrow X$, we are interested in knowing whether the Borel subsets of $Z$ are taken into Borel subsets of $X$ by the injection $j$. Of course, this is equivalent to asking that the open subsets of $Z$ are taken into Borel subsets of $X$.

In general, this is a delicate issue. In fact, one can take $X=[0,1]$ with the usual norm inherited from $\mathds{R}$ and $Z=[0,1]$ with the zero-one norm (associated with the discrete topology), so that every subset of $Z$ is an open set, hence Borel in $Z$, but they are certainly not all included in the family of Borel subsets of $X$. It is worth noticing in this case that $X$ is a compact metric space and $Z$ is a locally compact metric space, but $Z$ is not separable. 

In the case that $Z$ and $X$ are Polish spaces, then \cite[Theorem 6.8.6]{Bogachev} guarantees that  $j(B)$ is Borel in $X$, for any Borel $B$ in $Z$. The theorem actually allows $X$ to be more generally a Souslin space, which is defined as a continuous image of a Polish space. A closely related result can be deduced from \cite[Lemmas 6.7.6 and 6.7.7]{Bourbaki1966}, allowing $X$ to be simply a metrizable space, while assuming that $Z$ is a Lusin space, which is a space intermediate between Polish and Souslin, that can be characterized as the image of a Polish space under a continuous bijective map \cite[Definition 6.4.6 and Proposition 6.4.12]{Bourbaki1966}. Thus, if $Z$ is a Lusin space and $X$ is a metrizable space, then every Borel subset of $Z$ is a Borel subset of $X$. This situation encompasses many important applications in which $Z$ and $X$ are Sobolev spaces, or other classical Banach spaces like Besov and Morrey, as long as $Z$ is separable.

Another particular situation important for us is when we consider the weak and strong topologies of a topological vector space. Since the strong topology is finer than the weak topology, every Borel set in the weak topology is also a Borel set in the strong topology. Conversely, if the topological vector space is separable and locally convex, then every strongly open set can be written as a countable union of strongly closed convex sets, and, thanks to the Hahn-Banach Theorem, every strongly closed convex set is weakly closed, so that every Borel set for the strong topology is also a Borel set for the weak topology. Therefore, in the case of separable locally convex topological vector space, both strong and weak Borel $\sigma$-algebras in fact coincide. Combining this result with that of \cite{Bourbaki1966} mentioned above, we see that if $Z$ is a Lusin space and $X$ is obtained as a metrizable and separable locally convex topological vector space endowed with its weak topology, then the Borel subsets of $Z$ are Borel subsets of $X$. This includes the case in which $X$ is a separable Banach space, or even separable Fr\'echet, endowed with its weak topology, which is the case we consider in the applications in Sections \ref{subsecnse} and \ref{subsecnonlinearwaveeq}.

The results above can be easily extended to an important situation in partial differential equations, namely when $Z$ is the space of infinitely-differentiable test functions with compact support and endowed with the topology used in the theory of distributions, which is the inductive limit of a countable family of separable Fr\'echet spaces \cite[Section I.1]{Yosida1980}. Notice that any separable Fr\'echet space is a Polish space. Now, if $Z=\cup_{i\in\mathds N} Z_i$ is any inductive limit of a countable sequence of subsets $Z_i$ which are separable Fr\'echet spaces, then any open set $O$ in $Z$ can be written as $O = \cup_i (O \cap Z_i)$, and each $O\cap Z_i$ is open in $Z_i$. Then, if $X$ is either a metrizable space or a separable Fr\'echet space endowed with its weak topology, we have that each $Z_i$ is also continuously injected into $X$, and, by the previous results, each $O\cap Z_i$ is Borel in $X$. Thus, the countable union $O = \cup_i (O\cap Z_i)$ is also Borel in $X$. Hence, we deduce that any Borel subset of $Z$ is a Borel subset of $X$.

Finally, we should mention that the results described above can be extended to include products of those spaces, allowing us to tackle systems of equations. 

The conditions on $Z$ and $X$ described above certainly do not exhaust all the possible situations (see \cite{RogersJayne1980} for further conditions). For this reason, our main results on the existence of statistical solutions in the phase space (Theorems \ref{thm_projectedss_is_ss}, \ref{thm-existence-projectedss}, and \ref{thmextensionhdoubleprimephasespace}, and Proposition \ref{prop-energy-ineq}) do not assume any extra structure on the spaces $Z$ and $X$, instead just assume that Borel subsets of $Z$ are Borel subsets of $X$, leaving the verification of this condition to the applications.

\subsection{Topologies for measure spaces and related results}
\label{subsectopformeasspaces}

In \cite{Topsoebook}, Topsoe considered a topology in $\MX$ obtained as the smallest one for which the mappings $\mu \mapsto \mu(f)$ are upper semicontinuous, for every bounded and upper semi-continuous real-valued function $f$ on $X$. Topsoe calls this topology the ``weak topology'', but in order to avoid any confusion we call it here the \textbf{weak-star semi-continuity topology} on $\MX$. When a net $\{\mu_\alpha\}_\alpha$ converges to $\mu$ with respect to this topology, we write $\mu_\alpha \wconv \mu$.

A more common topology used in $\MX$ is the \textbf{weak-star topology,} which is the smallest topology for which the mappings $\mu \mapsto \mu(f)$ are continuous, for every bounded and continuous real-valued function $f$ on $X$. If a net $\{\mu_\alpha\}_\alpha$ converges to $\mu$ with respect to this topology, we denote $\mu_\alpha \wsconv \mu$.

It is not difficult to see that the weak-star semi-continuity topology is equivalent to the topology obtained as the smallest one for which the mappings $\mu \mapsto \mu(f)$ are lower semi-continuous, for every bounded and lower semi-continuous real-valued function $f$ on $X$. Then, considering now $f$ as a bounded and continuous real-valued function on $X$, it follows that, for every $a,b \in \mathds R$, the set
\begin{multline}
\{\mu \in \MX \,|\, \mu(f) \in (a,b)\} = \\
\{\mu \in \MX \,|\, \mu(f) \in (a,+\infty)\} \cap \{\mu \in \MX \,|\, \mu(f) \in (-\infty,b)\}
\end{multline}
is open in the weak-star semi-continuity topology, since $f$ is in particular a lower and upper semi-continuous function. This shows that the weak-star topology is in general weaker than the weak-star semi-continuity topology. Moreover, according to Lemma \ref{lemportmanteau} below, if $X$ is a completely regular Hausdorff space, then these two topologies coincide when restricted to the space $\MXt$.

The following lemma summarizes the properties about the weak-star semi-continuity topology that we have mentioned and provides some additional useful characterizations (see \cite[Theorem 8.1]{Topsoebook}).

\begin{lem}\label{lemportmanteau}Let $X$ be a Hausdorff space.
For a net $\{\mu_\alpha\}_\alpha$ in $\MX$ and $\mu\in \MX$,
consider the following statements:
\begin{itemize}
\item [(1)]$\mu_\alpha \wconv\mu$;
\item [(2)]$\limsup \mu_\alpha (f)\leq \mu(f)$, for all bounded upper
semicontinuous function $f$;
\item [(3)]$\liminf \mu_\alpha (f)\geq \mu(f)$, for all bounded lower
semicontinuous function $f$;
\item [(4)]$\lim_\alpha \mu_\alpha(X)=\mu(X)$ and $\limsup \mu_\alpha (F)\leq \mu(F)$, for all closed set $F\subset X$;
\item [(5)]$\lim_\alpha \mu_\alpha(X)=\mu(X)$ and $\liminf \mu_\alpha (G)\geq \mu(G)$, for all open set $G\subset X$;
\item [(6)]$\mu_\alpha \wsconv \mu$.
\end{itemize}

Then the first five statements are equivalent and each of them implies the last one.

Furthermore, if $X$ is also completely regular and $\mu \in \MXt$, then all six statements are equivalent.
\end{lem}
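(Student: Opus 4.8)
The plan is to establish the chain of equivalences $(1)\Leftrightarrow(2)\Leftrightarrow(3)$, then $(2)\Rightarrow(4)\Leftrightarrow(5)\Rightarrow(3)$, so that the first five statements are equivalent, to observe that $(2)$ and $(3)$ together force $(6)$, and finally to close the loop under the extra hypotheses by proving $(6)\Rightarrow(5)$. First I would dispatch $(1)\Leftrightarrow(2)$ directly from the definition of the weak-star semi-continuity topology: since that topology is the coarsest one making each map $\nu\mapsto\nu(f)$ upper semicontinuous for bounded u.s.c.\ $f$, its subbasic open sets are of the form $\{\nu : \nu(f)<c\}$, and a net converges to $\mu$ iff it is eventually inside each such neighborhood of $\mu$, which is exactly the statement $\limsup_\alpha\mu_\alpha(f)\le\mu(f)$ for all bounded u.s.c.\ $f$ (the net characterization of upper semicontinuity). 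The equivalence $(2)\Leftrightarrow(3)$ is the substitution $f\mapsto -f$, which exchanges bounded u.s.c.\ and bounded l.s.c.\ functions and turns $\limsup$ into $-\liminf$. Applying $(2)$ to the constant functions $f\equiv1$ and $f\equiv -1$ (both continuous, hence u.s.c.) already yields $\lim_\alpha\mu_\alpha(X)=\mu(X)$, the total-mass condition appearing in $(4)$ and $(5)$.

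Next, for $(2)\Rightarrow(4)$ and $(3)\Rightarrow(5)$ I would test against the indicators $\mathbf 1_F$ of closed sets (which are u.s.c.) and $\mathbf 1_G$ of open sets (which are l.s.c.), together with the mass condition; and $(4)\Leftrightarrow(5)$ follows by complementation $G=X\setminus F$ using $\mu_\alpha(G)=\mu_\alpha(X)-\mu_\alpha(F)$ and $\lim\mu_\alpha(X)=\mu(X)$. The one genuinely quantitative step is $(5)\Rightarrow(3)$: given bounded l.s.c.\ $f$, after an affine normalization (harmless thanks to the mass condition) I may assume $0\le f\le M$, and I approximate $f$ from below by the lower sums $\frac{M}{n}\sum_{k=1}^{n-1}\mathbf 1_{G_k}$ with $G_k=\{f>kM/n\}$ open. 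Since the sum is finite, superadditivity of $\liminf$ together with $(5)$ gives $\liminf_\alpha\mu_\alpha(f)\ge \frac{M}{n}\sum_{k=1}^{n-1}\mu(G_k)$, and letting $n\to\infty$ the right-hand side increases to $\int_0^M\mu(\{f>t\})\,\rd t=\mu(f)$ by the layer-cake formula. Working with finite sums here is what lets me avoid invoking Fatou's lemma for nets, for which the usual sequential statement is not available. This closes $(1)$--$(5)$, and since a bounded continuous $f$ is simultaneously u.s.c.\ and l.s.c., $(2)$ and $(3)$ give $\lim_\alpha\mu_\alpha(f)=\mu(f)$, i.e.\ $(6)$.

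The main obstacle is the reverse implication under the extra hypotheses, $(6)\Rightarrow(5)$, which is exactly where complete regularity and tightness of $\mu$ enter. Fix an open set $G$ and $\varepsilon>0$. Since $\mu$ is tight it is inner regular with respect to compact sets, so I may choose a compact $K\subset G$ with $\mu(K)>\mu(G)-\varepsilon$. Complete regularity then supplies a continuous Urysohn-type function $h:X\to[0,1]$ with $h\equiv1$ on $K$ and $h\equiv0$ off $G$; I would build it by separating each point of $K$ from the closed set $X\setminus G$, extracting a finite subcover of $K$, and taking a truncated maximum of the resulting functions. Then $\mathbf 1_K\le h\le\mathbf 1_G$, so $\liminf_\alpha\mu_\alpha(G)\ge\liminf_\alpha\mu_\alpha(h)=\mu(h)\ge\mu(K)>\mu(G)-\varepsilon$, using $(6)$ for the middle equality; as $\varepsilon$ is arbitrary this yields $(5)$ (the mass condition again coming from $f\equiv1$). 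I expect the construction of $h$ and the careful use of inner regularity to be the delicate points, and I would emphasize that without complete regularity one cannot separate compact sets from complements of open sets by continuous functions, while without tightness one cannot pass from $G$ to an inner compact approximation, so both hypotheses are genuinely used here.
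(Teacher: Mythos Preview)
Your argument is correct and follows the standard Portmanteau route: the definition of the weak-star semi-continuity topology gives $(1)\Leftrightarrow(2)$ directly, the substitution $f\mapsto -f$ handles $(2)\Leftrightarrow(3)$, indicators of closed/open sets give $(2)\Rightarrow(4)$ and $(3)\Rightarrow(5)$, complementation gives $(4)\Leftrightarrow(5)$, and the layer-cake/lower-sum approximation closes $(5)\Rightarrow(3)$; your care to work with finite sums so as to avoid a Fatou lemma for nets is the right move. The final step $(6)\Rightarrow(5)$ via inner regularity on compacts plus a Urysohn-type function manufactured from complete regularity and a finite subcover of $K$ is exactly the standard mechanism, and your remark that both hypotheses are genuinely used there is accurate.

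As for comparison: the paper does not actually supply its own proof of this lemma. It states the result and refers the reader to \cite[Theorem 8.1]{Topsoebook}. Your write-up is essentially a self-contained reconstruction of that classical argument, so there is nothing to contrast at the level of strategy; you have simply filled in what the paper chose to cite.
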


Although our framework is based on a general Hausdorff space, our proofs rely on reducing some structures to compact subsets, hence completely regular Hausdorff spaces, i.e., Tychonoff spaces. Moreover, since the measures in our proofs are usually tight, then in this setting both topologies coincide, so that we could have very well considered only the weak-star topology. However, we prefer to use the weak-star semi-continuity topology since it is a more natural topology for arbitrary Hausdorff spaces which simplifies our presentation and parts of the proofs, and which yields a compactness result in a stronger topology.

When dealing with convergent nets in a given space, a natural question arises as to whether the limits are unique. This requires the given space of measures to be Hausdorff. The delicate issue is to determine the minimal hypotheses for that.

If $X$ is a metrizable topological space, then $\MX$ is a Hausdorff space with respect to the weak-star topology (see \cite[Section 15.1]{AB}), and hence also with respect to the weak-star semi-continuity topology. However, requiring $X$ to be metrizable is too restrictive for our purposes. In looking for a more general setting for the space $X$, we were led to work within the space of tight measures $\MXt$ and with the weak-star semi-continuity topology, which Topsoe proved to be a Hausdorff space. This key result was in fact a motivation for Topsoe to advance his work on the subject (see \cite[Preface]{Topsoebook}). A proof of this fact is given in \cite[Theorem 11.2]{Topsoebook} by showing that the limits of convergent nets are unique. Here we chose to include a proof showing directly that distinct measures in $\MX$ can be separated by open sets.

\begin{thm}\label{thmMXthausdorff}
Let $X$ be a Hausdorff space. Then, $\MXt$ is a Hausdorff space with respect to the weak-star semi-continuity topology.
\end{thm}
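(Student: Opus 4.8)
The plan is to prove Hausdorffness directly, by showing that any two distinct measures $\mu,\nu\in\MXt$ admit disjoint neighborhoods in the weak-star semi-continuity topology. The first thing to record is which open sets are available. Since this topology is the smallest one making $\rho\mapsto\rho(f)$ upper semicontinuous for every bounded upper semicontinuous $f$, and (as noted before Lemma \ref{lemportmanteau}) equivalently the smallest making $\rho\mapsto\rho(g)$ lower semicontinuous for every bounded lower semicontinuous $g$, I would extract two families of building blocks: taking $f=\chi_F$ with $F$ closed (a bounded u.s.c. function) shows that $\{\rho:\rho(F)<c\}$ is open, and taking $g=\chi_G$ with $G$ open (a bounded l.s.c. function) shows that $\{\rho:\rho(G)>c\}$ is open. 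I would also note that $\rho\mapsto\rho(X)=\rho(\chi_X)$ is continuous, since $\chi_X\equiv 1$ is both u.s.c. and l.s.c.

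Next I would dispose of the easy case. If $\mu(X)\neq\nu(X)$, pick $c$ strictly between the two values; then $\{\rho:\rho(X)<c\}$ and $\{\rho:\rho(X)>c\}$ are disjoint open neighborhoods by continuity of the total mass. So I may assume $\mu(X)=\nu(X)=m$. Since $\mu\neq\nu$, there is a Borel set $A$ with, say, $\mu(A)>\nu(A)$, and inner regularity of $\mu$ with respect to compact sets (tightness) produces a compact $K\subseteq A$ with $\mu(K)>\nu(A)\geq\nu(K)$. The whole problem thus reduces to separating $\mu,\nu$ given a compact set $K$ on which $\mu$ carries strictly more mass than $\nu$, with equal total masses.

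The main obstacle is the asymmetry of the available open sets: I can only assert ``mass on an open set is large'' or ``mass on a closed set is small'', and two such conditions are mutually exclusive \emph{only} when the open set sits inside the closed set, so that $\rho(G)\leq\rho(F)$ produces a contradiction. Hence the heart of the argument is to manufacture an open $G$ and a closed $F$ with $G\subseteq F$ and $\mu(G)>\nu(F)$. Here I would use tightness of $\nu$ a second time: setting $\varepsilon=(\mu(K)-\nu(K))/4>0$, inner regularity gives a compact $K'\subseteq X\setminus K$ with $\nu(K')>\nu(X\setminus K)-\varepsilon$. Now $K$ and $K'$ are disjoint compact subsets of a Hausdorff space, so the standard argument encloses them in disjoint open sets $G\supseteq K$ and $G'\supseteq K'$; taking $F=X\setminus G'$ yields a closed set with $G\subseteq F$.

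Finally I would verify the mass inequality and conclude. Since $G\supseteq K$, one has $\mu(G)\geq\mu(K)=\nu(K)+4\varepsilon$, while $F=X\setminus G'$ gives $\nu(F)=m-\nu(G')\leq m-\nu(K')<m-\nu(X\setminus K)+\varepsilon=\nu(K)+\varepsilon$, using $m-\nu(X\setminus K)=\nu(K)$. Choosing $c=\nu(K)+2\varepsilon$, so that $\nu(F)<c<\mu(G)$, the sets $U=\{\rho:\rho(G)>c\}$ and $V=\{\rho:\rho(F)<c\}$ are open by the first paragraph, contain $\mu$ and $\nu$ respectively, and are disjoint because $G\subseteq F$ forces $\rho(G)\leq\rho(F)$ for every $\rho$. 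This separates $\mu$ from $\nu$ and establishes that $\MXt$ is Hausdorff. The only ingredients requiring care beyond this bookkeeping are the elementary facts that $\chi_F$ is u.s.c. for $F$ closed and $\chi_G$ is l.s.c. for $G$ open, and that disjoint compact sets in a Hausdorff space have disjoint open neighborhoods; everything else follows from the two tightness approximations.
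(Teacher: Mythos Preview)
Your proof is correct and uses essentially the same ingredients as the paper's: separation via characteristic functions of open and closed sets, tightness to produce compact approximants, and the fact that disjoint compact subsets of a Hausdorff space admit disjoint open neighborhoods. The only difference is organizational: the paper wraps the construction in a contradiction argument (assuming no Borel $A$ satisfies $\mu_1(\bar A)<\mu_2(\mathring A)$ and deriving $\mu_1=\mu_2$), whereas you build the separating open $G$ and closed $F$ with $G\subset F$ directly, which is arguably cleaner.
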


\begin{proof}
Consider two distinct measures $\mu_1,\mu_2$ in $\MXt$.

Suppose at first that $\mu_1(X) \neq \mu_2(X)$. Let us assume, without loss of generality, that $\mu_1(X) < \mu_2(X)$. Then there exists $\varepsilon > 0$ such that
\be\label{eqmu1neqmu2}
\mu_1(X) < \mu_1(X) + \varepsilon < \mu_2(X) - \varepsilon < \mu_2(X).
\ee
Denote $a = \mu_1(X) + \varepsilon$ and $b = \mu_2(X) - \varepsilon$. Let $f$ be the constant function $f \equiv 1$ on $X$. Then, it follows from \eqref{eqmu1neqmu2} that
\be\label{lemHausdeq1}
\mu_1 \in \{\mu \in \MX \,|\, \mu(f) \in (-\infty,a)\}
\ee
and
\be\label{lemHausdeq2}
\mu_2 \in \{ \mu \in \MX \,|\, \mu(f) \in (b,+\infty)\}.
\ee
Since $f$ is in particular a bounded and continuous real-valued function on $X$, it follows that the sets in \eqref{lemHausdeq1} and \eqref{lemHausdeq2} are open sets in $X$. These sets are also clearly disjoint. Hence, $\mu_1$ and $\mu_2$ can be separated by disjoint open sets in $X$.

Now suppose that $\mu_1(X) = \mu_2(X)$. We claim that there exists a set $A \in \fB_X$ such that $\mu_1(\bar{A}) < \mu_2(\mathring{A})$, where $\bar{A}$ and $\mathring{A}$ denote the closure and interior of $A$, respectively. Indeed, suppose by contradiction that
\be\label{eqcontradict}
\mu_1(\bar{A}) \geq \mu_2(\mathring{A}), \quad \forall A \in \fB_X.
\ee

Consider $E \in \fB_X$ and let $K_1,K_2$ be arbitrary compact sets in $X$ satisfying $K_1 \subset X \backslash E$ and $K_2 \subset E$. Then, since $X$ is a Hausdorff space, there exist disjoint open sets $B_1,B_2$ in $X$ such that $K_1 \subset B_1$ and $K_2 \subset B_2$.

In particular, using that $B_2$ is an open set, it follows from \eqref{eqcontradict} that
\be\label{eqmu2}
\mu_2 (B_2) = \mu_2 (\mathring{B_2}) \leq \mu_1 (\bar{B_2}).
\ee

But clearly $\bar{B_2} \subset X \backslash K_1$. Then,
\be\label{eqmu1}
\mu_1(\bar{B_2}) \leq \mu_1(X \backslash K_1) = \mu_1(X) - \mu_1 (K_1).
\ee

From \eqref{eqmu2} and \eqref{eqmu1}, we obtain that
\[
\mu_2(K_2) \leq \mu_2 (B_2) \leq \mu_1(X) - \mu_1(K_1).
\]

Now since $K_1$ and $K_2$ were chosen arbitrarily, taking the supremum over all compact sets $K_1,K_2$ with $K_1 \subset X \backslash E$ and $K_2 \subset E$, and using that $\mu_1$ and $\mu_2$ are tight, it follows that
\[
\mu_2(E) \leq \mu_1(X) - \mu_1 (X \backslash E) = \mu_1 (E).
\]
Thus,
\be\label{eqleq}
\mu_2(E) \leq \mu_1(E), \quad \forall E \in \fB_X.
\ee
This implies in particular that
\[
\mu_2 (X) - \mu_2(E) = \mu_2( X \backslash E) \leq \mu_1 (X \backslash E) = \mu_1(X) - \mu_1(E).
\]
Using the hypothesis that $\mu_1(X) = \mu_2(X)$, we then obtain
\be\label{eqgeq}
\mu_2(E) \geq \mu_1(E), \quad \forall E \in \fB_X.
\ee
Now \eqref{eqleq} and \eqref{eqgeq} yield $\mu_1 = \mu_2$, which is a contradiction.

Thus, we may consider $A \in \fB_X$ such that $\mu_1(\bar{A}) < \mu_2(\mathring{A})$. The argument now follows analogously to the previous case. Consider $\varepsilon > 0$ satisfying
\[
\mu_1(\bar{A}) < \mu_1( \bar{A}) + \varepsilon < \mu_2(\mathring{A}) - \varepsilon < \mu_2(\mathring{A})
\]
and denote $a' = \mu_1( \bar{A}) + \varepsilon$ and $b' = \mu_2(\mathring{A}) - \varepsilon$. Since the characteristic function $\chi_{\bar{A}}$ of $\bar{A}$ is bounded and upper semi-continuous and the characteristic function $\chi_{\mathring{A}}$ of $\mathring{A}$ is bounded and lower semicontinuous, then the sets
\[
\{\mu \in \MX \,|\, \mu(\chi_{\bar{A}}) \in (-\infty, a')\}
\]
and
\[
\{\mu \in \MX \,|\, \mu(\chi_{\mathring{A}}) \in (b', +\infty)\}
\]
are clearly disjoint open sets in $X$ containing $\mu_1$ and $\mu_2$, respectively. Thus, $\mu_1$ and $\mu_2$ can also be separated by disjoint open sets of $X$ in case $\mu_1(X) = \mu_2(X)$.

This proves that $\MXt$ is a Hausdorff space.
\end{proof}

Moreover, if $X$ is assumed to be a completely regular Hausdorff space, then by Lemma \ref{lemportmanteau} the weak-star semi-continuity and weak-star topologies are the same in $\MXt$. Thus, it follows from Theorem \ref{thmMXthausdorff} that $\MXt$ is also Hausdorff with respect to the weak-star topology. We have just proved the following corollary:

\begin{cor}\label{corMXthausdorff}
Let $X$ be a completely regular Hausdorff space. Then $\mM(X,\text{tight})$ is a Hausdorff space with respect to the weak-star topology.
\end{cor}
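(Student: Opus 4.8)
The plan is to deduce this directly from the two results just established, namely Theorem \ref{thmMXthausdorff} and Lemma \ref{lemportmanteau}, rather than to separate measures by hand in the weak-star topology. The idea is that Hausdorffness is a property that transfers along an equality of topologies, and Lemma \ref{lemportmanteau} supplies exactly such an equality on the subset $\MXt$ once $X$ is completely regular. Thus the whole argument reduces to checking that the weak-star and the weak-star semi-continuity topologies agree when restricted to $\MXt$, and then invoking Theorem \ref{thmMXthausdorff}.

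First I would record that the weak-star topology is always coarser than the weak-star semi-continuity topology on $\MX$, as observed just before Lemma \ref{lemportmanteau}; equivalently, every net satisfying $\mu_\alpha \wconv \mu$ also satisfies $\mu_\alpha \wsconv \mu$. This is one of the two inclusions of convergent nets, and it holds without any regularity assumption. For the reverse inclusion I would use the hypothesis that $X$ is completely regular together with the fact that the limit lies in $\MXt$: by the final assertion of Lemma \ref{lemportmanteau}, statements (1) and (6) are then equivalent, so $\mu_\alpha \wsconv \mu$ implies $\mu_\alpha \wconv \mu$ for every net $\{\mu_\alpha\}$ and every $\mu \in \MXt$. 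Since the members of any net drawn from $\MXt$ lie a fortiori in $\MX$, the lemma applies to nets internal to $\MXt$, and we conclude that the two subspace topologies on $\MXt$ have exactly the same convergent nets with the same limits.

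From here I would appeal to the standard fact that a topology is completely determined by its convergent nets: two topologies on a common set coincide precisely when they declare the same nets convergent to the same limits. Applying this to the two subspace topologies on $\MXt$ yields that they are identical. Consequently, since Theorem \ref{thmMXthausdorff} already shows $\MXt$ to be Hausdorff in the weak-star semi-continuity topology, it is Hausdorff in the weak-star topology as well, which is the assertion of the corollary.

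The one step requiring care is the upgrade from the net-level equivalence in Lemma \ref{lemportmanteau} to an honest equality of the two subspace topologies: one must ensure that restricting attention to nets that live inside $\MXt$ (rather than arbitrary nets in $\MX$ with tight limit) still captures both subspace topologies, and then invoke the characterization of topologies by net convergence. An alternative, more hands-on route would be to separate two distinct $\mu_1,\mu_2\in\MXt$ directly by disjoint weak-star open sets, mimicking the proof of Theorem \ref{thmMXthausdorff}; there the obstacle would be that the separating functions $\chi_{\bar{A}}$ and $\chi_{\mathring{A}}$ used in the case $\mu_1(X)=\mu_2(X)$ are only semicontinuous, so one would have to exploit complete regularity to replace them by genuinely continuous functions, essentially reproving Theorem \ref{thmMXthausdorff} under the stronger hypothesis. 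The topology-coincidence argument is cleaner and is the one I would adopt.
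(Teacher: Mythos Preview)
Your proposal is correct and follows essentially the same approach as the paper: the paper observes that, by Lemma \ref{lemportmanteau}, the weak-star and weak-star semi-continuity topologies coincide on $\MXt$ when $X$ is completely regular, and then invokes Theorem \ref{thmMXthausdorff}. You flesh out the topology-coincidence step more carefully via net convergence, but the underlying argument is the same.
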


\begin{rmk}
If we relaxed the hypothesis in Corollary \ref{corMXthausdorff} by assuming $X$ to be a regular space instead of a completely regular space, then this result would no longer be valid. Indeed, it is possible to construct examples of regular spaces which are not completely regular and containing two distinct points $a,b$ for which every continuous real-valued function satisfies $f(a)=f(b)$ (see \cite{Mysior,Raha}). The corresponding space $\mM(X,\text{tight})$ is then not Hausdorff with respect to the weak-star topology, since it suffices to consider the Dirac measures $\delta_a$ and $\delta_b$ concentrated on $a$ and $b$, respectively, and to note that $\delta_a(f)=\delta_b(f)$, for every bounded and continuous real-valued function $f$ on $X$.
\end{rmk}

We next state a result of compactness on the space of tight measures $\MXt$ that is essential for our main result. For a proof of this fact, see \cite[Theorem 9.1]{Topsoebook}.

\begin{thm}\label{thmMXtcompactness}
Let $X$ be a Hausdorff space and let $\{\mu_{\alpha}\}_{\alpha}$ be a net in
$\MXt$ such that $\limsup \mu_{\alpha} (X) < \infty$. If $\{\mu_{\alpha}\}_{\alpha}$ is uniformly tight, then it is compact with respect to the weak-star semi-continuity topology in $\MXt$.
\end{thm}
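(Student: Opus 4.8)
The statement asserts that the net is (relatively) compact in $\MXt$ for the weak-star semi-continuity topology; I read this as: every subnet admits a further subnet converging, in that topology, to some element of $\MXt$. Since any subnet of $\{\mu_\alpha\}_\alpha$ is again uniformly tight with $\limsup_\alpha\mu_\alpha(X)<\infty$, it suffices to produce one convergent subnet of $\{\mu_\alpha\}_\alpha$ itself. The plan is to first discard an initial segment so that $\mu_\alpha(X)\le M$ for a fixed finite $M$, and then regard the assignment $\alpha\mapsto(\mu_\alpha(G))_{G}$, indexed by the open subsets $G\subset X$, as a net in the product $\prod_{G}[0,M]$. By Tychonoff's theorem this product is compact, so there is a subnet, which I keep denoting $\{\mu_\alpha\}_\alpha$, along which $\mu_\alpha(G)\to\nu(G)$ for every open set $G$; taking $G=X$ gives $\mu_\alpha(X)\to m:=\nu(X)$.

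The set function $\nu$ on open sets inherits, in the limit, the exact finite relations valid for each $\mu_\alpha$: it is monotone, and for open $G_1,G_2$ the identity $\mu_\alpha(G_1\cup G_2)+\mu_\alpha(G_1\cap G_2)=\mu_\alpha(G_1)+\mu_\alpha(G_2)$ passes to the limit to give $\nu(G_1\cup G_2)+\nu(G_1\cap G_2)=\nu(G_1)+\nu(G_2)$, whence $\nu$ is modular and finitely subadditive. From $\nu$ I would form the outer set function
\[
\mu^*(A)=\inf\{\nu(G)\,|\,A\subset G,\ G\text{ open}\},\qquad A\subset X,
\]
which agrees with $\nu$ on open sets by monotonicity. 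The goal is to check that $\mu^*$ is an outer measure and that every open set, hence every Borel set, is $\mu^*$-measurable (the latter using modularity of $\nu$ on opens), so that $\mu:=\mu^*|_{\fB_X}$ is a finite Borel measure with $\mu(G)=\nu(G)$ for open $G$ and $\mu(X)=m$.

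The delicate point, and the step I expect to be the main obstacle, is promoting the merely finitely additive limit $\nu$ to a genuinely countably additive tight measure: countable subadditivity of $\mu^*$ amounts to continuity of $\nu$ from below along increasing sequences of open sets, and this can fail for a limit of measures precisely when mass escapes (e.g. $\mu_n=\delta_n$ on $\mathds R$, where $\nu$ vanishes on every bounded open set yet $\nu(\mathds R)=1$). This is exactly where uniform tightness is indispensable. Given $\varepsilon>0$, choose a compact $K\subset X$ with $\mu_\alpha(X\setminus K)<\varepsilon$ for all $\alpha$; since compact subsets of a Hausdorff space are closed, $X\setminus K$ is open and $\mu(X\setminus K)=\nu(X\setminus K)=\lim_\alpha\mu_\alpha(X\setminus K)\le\varepsilon$, so $\mu(K)\ge m-\varepsilon$. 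This uniform confinement of the mass to compact sets both secures the $\tau$-smoothness needed to close the countable-additivity argument (no mass leaks out of the compacts in the limit) and shows directly that $\mu$ is inner regular by compacts, i.e. $\mu\in\MXt$; the remaining verifications are the routine outer-measure/extension machinery.

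Finally, I would deduce the convergence $\mu_\alpha\wconv\mu$ from Lemma \ref{lemportmanteau}, checking its condition (5): along the extracted subnet one has $\lim_\alpha\mu_\alpha(X)=m=\mu(X)$, and for every open $G$ the limit exists with $\liminf_\alpha\mu_\alpha(G)=\lim_\alpha\mu_\alpha(G)=\nu(G)=\mu(G)$, so in particular $\liminf_\alpha\mu_\alpha(G)\ge\mu(G)$. By Lemma \ref{lemportmanteau} these two facts give $\mu_\alpha\wconv\mu$ with $\mu\in\MXt$, which, together with the reduction to subnets noted at the outset, establishes the asserted compactness.
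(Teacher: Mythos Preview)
The paper does not give its own proof of this theorem; it simply cites \cite[Theorem 9.1]{Topsoebook}. So there is no ``paper's proof'' to compare against, and your proposal must stand on its own.

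Your overall plan --- extract a Tychonoff subnet so that $\mu_\alpha(G)\to\nu(G)$ for every open $G$, build a Borel measure $\mu$ from $\nu$, check tightness, then invoke Lemma~\ref{lemportmanteau}(5) --- is the natural strategy, and the final two steps are fine. The gap is in the middle step, which you label ``routine outer-measure/extension machinery.'' In a bare Hausdorff space this is precisely the nontrivial content of Topsoe's result, and your sketch does not close it. Two concrete issues:

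\emph{Countable subadditivity of $\nu$ on opens.} You correctly identify that this reduces to continuity from below: $G_n\uparrow G$ open implies $\nu(G_n)\to\nu(G)$. You then say uniform tightness ``secures the $\tau$-smoothness.'' But the uniform compact $K$ you produce need not lie inside $G$, and $K\cap G$ is not compact, so you cannot extract a finite subcover from $\{G_n\}$ to control $\nu(G)$ uniformly in $\alpha$. The double limit $\lim_\alpha\sup_n\mu_\alpha(G_n)$ versus $\sup_n\lim_\alpha\mu_\alpha(G_n)$ does not interchange without an argument you have not supplied.

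\emph{Carath\'eodory measurability of open sets.} Modularity of $\nu$ on opens gives $\nu(U)\ge\nu(H)-\nu(H\cap G)$ for any open $U\supset H\cap G^c$, which is the wrong direction: to get $\mu^*(A)\ge\mu^*(A\cap G)+\mu^*(A\cap G^c)$ you would need an open $U\supset H\cap G^c$ with $\nu(U)$ \emph{at most} $\nu(H)-\nu(H\cap G)+\varepsilon$, and there is no reason such $U$ exists in a general Hausdorff space (no outer approximation of closed sets by opens is available).

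A route that does work, and is closer to Topsoe's, is to approximate from inside by compacts rather than from outside by opens: set $\lambda(K)=\lim_\alpha\mu_\alpha(K)$ for compact $K$ (well-defined by complementation from your $\nu$), show $\lambda$ is a finitely additive inner-compact-regular content, and then use the elementary fact that any finitely additive set function on a ring that is inner regular with respect to a compact class is automatically $\sigma$-additive (the finite-intersection-property argument). Uniform tightness enters exactly to give $\lambda(K_\varepsilon)\ge m-\varepsilon$, which makes the inner extension nondegenerate. Even so, the verifications are not one-liners; this is why the paper defers to Topsoe rather than reproducing the proof.
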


The previous theorem allows us to obtain a convergent subnet of a given net in $\MXt$, provided it satisfies the required conditions. Also, by using Theorem \ref{thmMXthausdorff}, we can guarantee that the limit of this convergent subnet is unique.

Evidently, all the results shown above are also valid in the space of probability measures. In what follows, we consider both the spaces of probability measures defined over the Hausdorff space $X$ and over the space of continuous paths $\mX$.

\subsection{Weak-star scalarwise derivative}
\label{subsecweakderivatives}

For an evolutionary differential equation, we need a notion of time-derivative for functions with values in a vector space. For our purposes we only need a scalarwise-type derivative, but since we look for solutions which are also weakly continuous (see Section \ref{subsecevoleq}), this turns out to be a functional derivative associated with the functional integral in the sense of Pettis (see Remark \ref{weakscalarwiseintegralandPettis}). 

We follow the nomenclature of scalarwise property given in \cite{Bourbaki1959, Edwards1965}, except that we restrict ourselves to functionals with values in the dual of a topological vector space.  More precisely, we consider a topological vector space $Y$ and its dual $Y'$, and we say that $u: I \rightarrow Y'$ has a certain property $P$ \textbf{weak-star scalarwise} if each function $t\mapsto \langle u(t),v\rangle_{Y',Y}$ has the property $P$, for every $v\in Y$. We will be interested in the weak-star scalarwise versions of the properties of a function being (Lebesgue) measurable, integrable, locally integrable, continuous, absolutely continuous, and almost everywhere differentiable. 

Saying that $u$ is weak-star scalarwise continuous is the same as saying that $u$ is weak-star continuous (i.e $u$ is continuous as a function from $I$ into the topological vector space $Y_{\rw^*}'$). But the same cannot be said about the other properties. For instance, $u$ can be weak-star scalarwise measurable without being Borel measurable from $I$ into $Y_{\rw^*}'$ (see Remark \ref{wscalarmeasnotstrongmeas}).

We are interested in functions $u: I \rightarrow Y'$ that are weak-star scalarwise  continuous and such that there exists a weak-star scalarwise locally integrable function $w: I \rightarrow Y'$ which is the weak-star scalarwise derivative of $u$ in the distribution sense on $I$, i.e.
\begin{equation}
  \label{evoleqweakdist}
 - \int_I \psi'(t)\langle u(t),v\rangle_{Y',Y}  \;\rd t = \int_I \psi(t) \langle w(t), v \rangle_{Y' ,Y} \;\rd t, \quad \forall v\in Y, \;\forall \psi\in \mC_\rc^\infty(I).
\end{equation}
This is equivalent  to assuming that $u: I \rightarrow Y'$ is weak-star scalarwise  continuous and that there exists a weak-star scalarwise locally integrable function $w: I \rightarrow Y'$ satisfying
\begin{equation}
  \label{evoleqintegral}
  \langle u(t_2),v\rangle_{Y',Y} - \langle u(t_1),v\rangle_{Y',Y} = \int_{t_1}^{t_2} \langle w(t), v \rangle_{Y' ,Y} \;\rd t, \quad \forall t_1, t_2\in I, \; \forall v\in Y.
\end{equation}
This is also equivalent (see e.g. \cite[Theorem 3.35]{Folland}) to considering functions that are weak-star scalarwise absolutely continuous and such that there exists a weak-star scalarwise locally integrable function $w: I \rightarrow Y'$ satisfying
\begin{equation}
  \label{evoleqweakae}
  \frac{\rd}{\rd t}\langle u(t),v\rangle_{Y',Y} = \langle w(t), v \rangle_{Y' ,Y}, \quad \forall v\in Y, \; \text{for a.e. } t \in I.
\end{equation}

Using the shorter property \eqref{evoleqweakae} of the equivalent forms \eqref{evoleqweakdist}, \eqref{evoleqintegral}, and \eqref{evoleqweakae}, we define the space
\begin{equation}
  \label{defYprime}
  \mY_1=\left\{ u: I \rightarrow Y' : \parbox{3.8in}{$u$ is weak-star scalarwise absolutely continuous and \smallskip \\ 
   \hspace{2mm} $\exists$ $w:I \rightarrow Y'$ weak-star scalarwise locally integrable \smallskip \\
   \ \ \ with $\displaystyle \frac{\rd}{\rd t}\langle u(t),v\rangle_{Y',Y} = \langle w(t), v \rangle_{Y' ,Y}$, a.e.  $t\in I$, $\forall v\in Y$} \right\}.
\end{equation}

The weak-star scalarwise derivative $w$ of an element of $\mY_1$, as given in the definition \eqref{defYprime}, is uniquely defined in the case that $Y_\rw$ is separable (see Remark \ref{wunique}), but may not be unique in general (see Remark \ref{wnotunique}).

\begin{rmk}
  \label{weakscalarwiseintegralandPettis}
  If $w:I \rightarrow Y'$ is weak-star scalarwise integrable on $I$, then $w$ induces a linear map $\Lambda_w$ belonging to the algebraic dual of $Y$, associating each element $v\in Y$ to the scalar $\Lambda_w(v) = \int_I \langle w(t),v\rangle_{Y',Y} \;\rd t$. This map is uniquely defined. If this map belongs to $Y'$, then it is the Pettis integral of $w$ over $I$ and it is written as $\Lambda_w = \int_I w(t) \;\rd t$. Similarly for the locally weak-star scalarwise integrable case, when integrating over compact measurable sets within $I$.  More generally, one may consider integrals on any topological vector space, not necessarily dual spaces, but this is the context we are interested in. There are a number of results concerning the scalarwise measurability and integrability and conditions for the existence of the Pettis integral; see e.g. \cite{Edwards1965, Bourbaki1959} for further details. For instance, if $Y$ is a locally convex topological vector space with the so-called GDF property (i.e. any linear map of $Y$ into a Banach space $B$ having a graph that is sequentially closed in $Y\times B$ is a continuous map), then any (weak-star) scalarwise integrable function $w$ from $I$ into $Y'$ has a Pettis integral well defined and belonging to $Y'$ \cite[Theorem 8.16.1]{Edwards1965}. Any Fr\'echet space and any inductive limit of Fr\'echet spaces have the GDF property \cite[Subsection 8.14.12]{Edwards1965}. In our work, however, we do not need to worry about conditions for the existence of the Pettis integral in $Y'$, since the weak formulation is all we need. Nevertheless, the Pettis integral of any weak-star scalarwise derivative of a function in $\mY_1$ does exist since the equivalent formulation \eqref{evoleqintegral} shows that the weak-star scalarwise integral is a continuous linear function on $Y$.
\end{rmk}

\begin{rmk}
  \label{wunique}
  In the case that $Y_\rw$ is separable, i.e. if $Y$ is separable under the weak topology, then $w: I \rightarrow Y'$ satisfying the conditions in the definition of $\mY_1$ in \eqref{defYprime} is unique (up to measure zero on $I$). Indeed, suppose $w_1$ and $w_2$ are two functions satisfying the conditions on $w$ in the definition of $\mY_1$. Assume there exists a countable set $D=\{v_j\}_{j\in \mathds N}$ in $Y$ which is dense in $Y$ with respect to the weak topology. Then, for each $v_j$, we have $\langle w_1(t), v_j \rangle_{Y' ,Y}= \langle w_2(t), v_j \rangle_{Y' ,Y}$, for almost every $t$ in $I$. Since the set $D$ is countable, there is a set of full measure $I_0$ in $I$ for which $\langle w_2(t)-w_1(t), v_j \rangle_{Y' ,Y}= 0$, $\forall t\in I_0$, $\forall j\in \mathds N$. For each $t\in I_0$, since $D$ is dense in $Y$ in the weak topology, we find that $\langle w_2(t)-w_1(t), v \rangle_{Y' ,Y}= 0$, for all $v\in Y$. This means that $w_2(t) = w_1(t)$, for all $t\in I_0$. Since $I_0$ is of full measure, we find that $w_1=w_2$ up to a set of measure zero. 
\end{rmk}

\begin{rmk}
  \label{wnotunique}  
  If $Y_\rw$ is not separable, then we cannot guarantee that $w: I \rightarrow Y'$ satisfying the conditions in the definition of $\mY_1$ in \eqref{defYprime} is unique. Indeed, take any nonseparable Hilbert space $Y$ with a basis with the cardinality of the continuum. Denote the inner product by $\langle \cdot, \cdot \rangle_Y$ and identify the dual $Y'$ with $Y$ itself. Let $I=[0,1]$ and let $\{v_s\}_{s\in I}$ be an orthonormal basis, parametrized by the continuum $I$. Consider the function $\tilde w:I\rightarrow Y$ given by $\tilde w(t)=v_t$, for $t\in [0,1]$. For each $v_s$, the real-valued function $t\mapsto \langle \tilde w(t),v_s\rangle_Y$ is equal to $0$, for $t\neq s$, and to $1$, for $t=s$. Thus, it is almost everywhere equal to zero and, in particular, Lebesgue integrable on $I$. If $v\in Y$ is arbitrary, then $v$ is a linear combination of at most a countable set of indices, so that $\langle \tilde w(t),v\rangle_Y$ is different from zero at most at a countable set of values $t$. This means that the real-valued function $t\mapsto \langle \tilde w(t),v\rangle_Y$ is, again, zero almost everywhere and Lebesgue integrable. Thus,
\[
  \int_0^1 \langle \tilde w(t),v\rangle_Y \; \rd t = 0, \quad \forall v\in Y.
\]
Nevertheless, $\tilde w$ is not zero almost everywhere in $Y$. In fact, it is different from zero everywhere on $I$. Thus, if a function $w:I\rightarrow Y'$ satisfies the conditions in the definition of $\mY_1$ in \eqref{defYprime}, then so does $w+\tilde w$, which shows the nonuniqueness of the weak-star scalarwise derivative in this case.
\end{rmk}  

\begin{rmk}
  \label{wscalarmeasnotstrongmeas}
  The same function $w$ defined in Remark \ref{wnotunique} when $Y$ is a nonseparable Hilbert space with an orthonormal basis with the cardinality of the continuum is an example of a function which is (weak-star) scalarwise Borel measurable on $I$ but not Borel measurable from $I$ into $Y_\rw$ (which in this case is exactly $Y_{w*}'$). Indeed, for each $t\in I$, let $O_t$ be a weakly open set in $Y$ which only contains the vector $v_t$ of the basis, not $v_s$, for $s\neq t$ (say $O_t = \{v\in Y: \;\langle v, v_t\rangle_Y > 1/2\}$). Take any nonmeasurable subset $A$ of $I$. Then, $O=\cup_{t\in A} O_t$ is a weakly open set in $Y$ such that $w^{-1}(O)=A$ is not measurable.
\end{rmk}

\begin{rmk}
  \label{bochnerint}
  In case $Y$ is a Banach  space, then so is $Y'$, and one may consider the Bochner integral in  $Y'$ \cite{Yosida1980}. The set of locally Bochner integrable functions that have a weak derivative which is also locally Bochner integrable in $Y'$ forms the space $W^{1,1}_{\rloc}(I;Y')$, and, in this case, the time derivative $u_t$ of a function $u\in W^{1,1}_{\rloc}(I;Y')$ is uniquely defined and satisfies $u(t_1) = u(t_0) + \int_{t_0}^{t_1} u_t(t)\;\rd t$ in $Y'$, for almost every $t_0,t_1\in I$, where the time integral is the Bochner integral in $Y'$. In this case, we notice that $W^{1,1}_{\rloc}(I;Y')$ is included in the space $\mY_1$ defined in \eqref{defYprime} (see e.g. \cite[Lemma 3.1.1]{temam84}), with \eqref{evoleqweakdist}, \eqref{evoleqintegral}, and \eqref{evoleqweakae} holding with $w=u_t$. Therefore, the framework above can be used in applications where the differential equation holds in a functional sense associated with the Bochner integral.
\end{rmk}

\subsection{Evolution equations}
\label{subsecevoleq}

The statistical solutions in a phase space $X$ (see Definition \ref{def-statsolphasespace}) are directly related to an evolution equation of the form
\be\label{evoleq}
u_t(t)=F(t,u(t)),
\ee
where the unknown $u$ belongs to the space $\mX=\Cloc(I,X)$, with $X$ being a Hausdorff space, and $I\subset \mathds R$, an interval. 

We want to be able to make sense of the equation \eqref{evoleq} under minimal hypotheses on the structure of $F$ and on the spaces involved. Of course, the peculiarities of the solutions are of interest in each application, but for our general result of existence of statistical solutions for the initial value problem, we want to avoid superfluous conditions. 

For partial differential equations, the right hand side of the equation \eqref{evoleq} involves spatial derivatives of the solution $u$, so it is not expected that $F$ be defined on $X$, but, instead, on a more regular space included in $X$. Moreover, for the time-derivative to be defined, we need a vector space structure, so that $u$ needs to be included in some topological vector space, possibly less regular than $X$.

With that in mind, we consider another Hausdorff space $Z$ and a topological vector space $Y$, and consider $F$ such that
\begin{equation}
  \label{typefunctionF}
  F:I\times Z \rightarrow Y',
\end{equation}
with the spaces satisfying
\begin{equation}
  \label{spacesforstatsolphasespace}
   Z \subset X \subset Y_{\rw^*}',
\end{equation}
where each space is continuously included in the next one, and where $Y_{\rw^*}'$ is the dual space $Y'$ endowed with the weak-star topology. The reason for the inclusion $X\subset Y_{\rw^*}'$ is that we will in fact consider the weak formulation of the evolution equation \eqref{evoleq}, so that \eqref{spacesforstatsolphasespace} is the natural assumption to make. In many cases, $Z=Y$, but this is not always convenient or necessary (see Section \ref{subsecreacdiffeq} and Remark \ref{ZequalsYrmk}).

Then, we look at the weak formulation of \eqref{evoleq}, i.e.
\begin{equation}
  \label{evoleqweak}
  \frac{\rd}{\rd t}\langle u(t),v\rangle_{Y',Y} = \langle F(t,u(t)), v \rangle_{Y' ,Y},
\end{equation}
in the distribution sense on $I$, for any $v\in Y$.

Since $u\in \mX=\Cloc(I,X)$ and $X\subset Y_{\rw^*}'$, we have that $u\in \Cloc(I,Y_{\rw^*}')$, so that $t\mapsto \langle u(t),v\rangle_{Y',Y}$ is continuous, for every $v\in Y$. Thus, in order for \eqref{evoleqweak} to make sense, $t\mapsto \langle u(t),v\rangle_{Y',Y}$ needs to be absolutely continuous and the right hand side of \eqref{evoleqweak} needs to be integrable. With that in mind, we consider $u$ belonging also to the set $\mY_1$ defined in \eqref{defYprime}. Moreover, we need $u$ such that $u(t)\in Z$ for almost every $t\in I$. Therefore, we define the spaces
\begin{equation}
  \label{defZ}
  \mZ=\{u\in \mX: u(t)\in Z \mbox{ for almost all } t\in I\},
\end{equation}
and
\begin{equation}
  \label{defXprime}
  \mX_1 = \mZ \cap \mY_1.
\end{equation}
Then, if $u\in \mX_1$ is such that
\begin{equation}
  \label{Ftuislocint}
  t \mapsto F(t,u(t)) \text{ is weak-star scalarwise locally integrable,}
\end{equation}
as a function from $I$ into $Y'$, it makes sense to require that $u_t = F(t,u)$ be valid in the weak sense \eqref{evoleqweak}. Notice that, for $u\in\mX_1$ such that \eqref{Ftuislocint} holds, condition \eqref{evoleqweak} is precisely \eqref{evoleqweakdist} with $w(t)=F(t,u(t))$ and is equivalent to both \eqref{evoleqintegral} and \eqref{evoleqweakae}, with the same $w$.

\subsection{Cylindrical test functions}

Consider a topological vector space $Y$ and let $v_1,\ldots, v_k \in Y$, where $k\in \mathds N$. Let $\phi$ be a continuously differentiable real-valued function on $\mathds R^k$ with compact support. For each $u\in Y'$, define $\Phi(u)\in \mathds R$ by
\[  \Phi(u) = \phi(\langle u, v_1\rangle_{Y',Y}, \ldots, \langle u,
v_k\rangle_{Y',Y}).
\]
The function $\Phi$ is clearly continuous from $Y'$ to $\mathds R$ and in fact it is differentiable in $Y'$, with differential $\Phi'$ at $u\in Y'$ given by
\be\label{eqfrechetderphi}
 \Phi'(u)=\sum_{j=1}^k\partial_j\phi(\langle u, v_1\rangle_{Y',Y}, \ldots,
\langle u, v_k\rangle_{Y',Y}) v_j,
\ee
where $\partial_j\phi$ denotes the derivative of $\phi$ with respect to its $j$-th coordinate. For each $u, w\in Y'$, 
\[ \langle w, \Phi'(u)\rangle_{Y',Y} = \sum_{j=1}^k \partial_j\phi(\langle u, v_1\rangle_{Y',Y}, \ldots,
\langle u, v_k\rangle_{Y',Y}) \langle w, v_j\rangle_{Y',Y}
\]
is the G\^ateaux derivative of $\Phi$ at $u$, in the direction of $w$. Moreover, $\Phi'(u)$ belongs to $Y$ and the differential $\Phi'$ is continuous from $Y_{\rw^*}'$ into itself, where we recall that $Y'_{w*}$ denotes the space $Y'$ endowed with its weak-star topology. If $Y$ is a Banach space, then $Y'$ is a Banach space under the strong operator norm, and $\Phi'$ is the Fr\'echet differential of $\Phi$ in this strong norm of $Y'$.

Functions of this form are called \textbf{cylindrical test functions in} $Y'$ and play an important role as test functions in the definition of statistical solution in phase space (see Definition \ref{def-statsolphasespace}). In that context, we will also consider a Hausdorff space $X$ which is assumed to be continuously imbedded in $Y'_{w*}$. Notice that since $u \mapsto \langle u, v\rangle_{Y',Y}$ is continuous in the weak-star topology for any $v\in Y$, the function $\Phi$ is also continuous from $Y'_{w*}$ into $\mathds R$. Then, since $X$ is continuously imbedded in $Y'_{w*}$, we may consider $\Phi$ restricted to $X$, which is continuous as a function from $X$ into $\mathds R$.

\begin{rmk}
The set of cylindrical test functions is a relatively large set, as can be seen from the Stone-Weierstrass Theorem (see e.g. \cite[Theorem IV.6.16]{dunfordschwartz}). In fact, consider a compact set $K'$ in $Y_{\rw^*}'$ (e.g. a (strongly) closed ball in $Y'$, if $Y$ is a normed space or, more generally, any closed subset of $Y_{\rw^*}'$ which is weakly bounded and equicontinuous (\cite[Theorem 1.11.4]{Edwards1965})). Denote by $\mS_{K'}$ the collection of the real-valued functions on $K'$ which are the restriction to $K'$ of the cylindrical test functions. Clearly, $\mS_{K'}\subset \mC(K')$ and, if $\Psi_1,\Psi_2\in \mS_{K'}$, then their sum $\Psi_1+\Psi_2$ and their product $\Psi_1\Psi_2$ also belong to $\mS_{K'}$. This means that $\mS_{K'}$ is a subalgebra of the algebra $\mC(K')$. Choose any $v\in Y$ and notice that the set $J = \{\langle u, v\rangle_{Y',Y}: u\in K'\}$ is compact in $\mathds R$, otherwise $K'$ would not be weak-star compact. Then, by taking any $\phi:\mathds{R}\rightarrow \mathds{R}$ which is continuously differentiable and is equal to $1$ on $J$, we see that $\Phi(u) = \phi(\langle u, v\rangle_{Y',Y})$ is a cylindrical test function which is equal to $1$ for $u\in K'$, showing that $\mS_{K'}$ contains the unit element. Moreover, if $u_1, u_2$ are distinct points in $K'$, there exists $v\in Y$ such that $\langle u_1, v\rangle_{Y',Y} \neq \langle u_2, v\rangle_{Y',Y}$. Hence, by choosing a continuously differentiable function $\phi:\mathds{R}\rightarrow \mathds{R}$ which is compactly supported and assumes different values at the points $\langle u_1, v\rangle_{Y',Y}$ and $\langle u_2, v\rangle_{Y',Y}$, we see that $\Phi(u) = \phi(\langle u, v\rangle_{Y',Y})$ is a cylindrical test function which assumes different values at $u_1$ and $u_2$, proving that $\mS_{K'}$ separates the points in $K'$. Therefore, the Stone-Weierstrass Theorem yields that $\mS_{K'}$ is dense in $\mC(K')$, for the uniform topology. Similarly, since $X$ is continuously imbedded into $Y_{w*}'$, it follows that, for any compact subset $K$ of $X$, the collection of the real-valued functions on $K$ which are the restrictions to $K$ of cylindrical test functions is dense in $\mC(K)$.
\end{rmk}

\subsection{The Nemytskii operator}

In relation to the evolution equation \eqref{evoleq}, we consider a function $F$ of the form $F:I\times Z\rightarrow W$, where $Z$ and $W$ are Hausdorff topological spaces, and consider the operator $G: I\times \mZ \rightarrow W$ given by $G(t,u) = F(t,u(t))$, where $\mZ$ is defined in \eqref{defZ}. It is of fundamental importance to deduce measurability properties for $G$ from those of $F$. This is the aim of this section. This extended operator $G$ is known as a \textbf{Nemytskii operator} in the context of partial differential equations, where the Hausdorff spaces above are simply spaces of real-valued functions defined on subdomains of Euclidean spaces. In this regard, we have the following results.

\begin{lem}
  \label{lemnemytskiiX}
Let $X$ and $W$ be Hausdorff spaces and let $I$ be an interval in $\mathds R$. Consider a function $F : I \times X \rightarrow W$ and define the associated Nemytskii operator $G:I\times \mX\rightarrow W$ by $G(t,u) = F(t,u(t))$, for $t\in I$ and $u\in \mX=\Cloc(I,X)$. If $F$ is a $(\fL_I\otimes\mathcal\fB_X,\fB_W)$-measurable function, then $G$ is $(\fL_I\otimes\fB_{\mX},\fB_W)$-measurable.
\end{lem}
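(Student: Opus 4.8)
The plan is to reduce the measurability of the Nemytskii operator $G$ to that of $F$ by factoring $G$ through an auxiliary evaluation-type map and using the product structure of the $\sigma$-algebra $\fL_I\otimes\fB_{\mX}$. The essential observation is that $G(t,u) = F(t,u(t)) = F(\text{ev}(t,u))$, where $\text{ev}:I\times\mX\rightarrow I\times X$ is the map $\text{ev}(t,u) = (t,u(t))$. Since $F$ is $(\fL_I\otimes\fB_X,\fB_W)$-measurable by hypothesis, it suffices to show that $\text{ev}$ is $(\fL_I\otimes\fB_{\mX}, \fL_I\otimes\fB_X)$-measurable; composing the two then yields the claim. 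Thus the whole problem collapses to proving measurability of the evaluation map $\text{ev}$.

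To establish measurability of $\text{ev}$, I would work generator-by-generator on the target $\sigma$-algebra $\fL_I\otimes\fB_X$. A measurable rectangle $L\times B$ with $L\in\fL_I$ and $B\in\fB_X$ generates this $\sigma$-algebra, so it is enough to check that $\text{ev}^{-1}(L\times B)\in\fL_I\otimes\fB_{\mX}$. Now
\begin{equation}
\text{ev}^{-1}(L\times B) = \{(t,u)\in I\times\mX : t\in L \text{ and } u(t)\in B\} = (L\times\mX)\cap\{(t,u):\Pi_t u\in B\},
\end{equation}
recalling the projection map $\Pi_t u = u(t)$ from \eqref{defprojectionop}. The first factor $L\times\mX$ clearly lies in $\fL_I\otimes\fB_{\mX}$, so the crux is the set $\{(t,u):u(t)\in B\}$. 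I would first treat the case where $B$ is open in $X$, which reduces to showing that the joint evaluation map $(t,u)\mapsto u(t)$ is jointly measurable; from the open sets one passes to all Borel $B$ by a standard monotone-class / good-sets argument, since the collection of $B\in\fB_X$ for which $\{(t,u):u(t)\in B\}$ is product-measurable forms a $\sigma$-algebra containing the open sets.

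The technical heart, then, is proving joint measurability of $(t,u)\mapsto u(t)$ for the open-set case, and this is where I expect the main obstacle to lie, because $X$ is merely a Hausdorff space with no countability or metrizability assumed. For fixed $t$ the map $u\mapsto u(t)=\Pi_t u$ is continuous, hence Borel, by the remark following \eqref{defprojectionop}; and for fixed $u$ the map $t\mapsto u(t)$ is continuous, hence Borel. The difficulty is upgrading separate measurability in each variable to joint measurability, which in full generality can fail. The natural route is to exploit the compact-open topology directly: for an open $U\subset X$ and a compact subinterval $J$, the subbasic set $S(J,U)=\{u:u(J)\subset U\}$ is open in $\mX$, and I would try to express $\{(t,u):t\in J,\,u(t)\in U\}$ in terms of such sets together with a covering/approximation of the time interval by a countable family of compact subintervals (using that $I\subset\mathds{R}$ is second countable). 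By continuity of each path and continuity of $\Pi_t$, I expect to write the target set as a countable union over rational subintervals of products of the form $(\text{subinterval})\times S(J,U)$, each of which is manifestly in $\fL_I\otimes\fB_{\mX}$. Carrying out this covering argument carefully — handling the interplay between the time variable ranging over $L$ and the path constraint, and verifying that continuity lets one pass from pointwise membership $u(t)\in U$ to the compact-open condition on a small neighborhood of $t$ — is the delicate step; once it is done, the reduction above finishes the proof.
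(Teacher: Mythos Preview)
Your overall factorization $G = F\circ\text{ev}$ with $\text{ev}(t,u)=(t,u(t))$ is exactly the paper's approach (the paper writes this as $G=F\circ(\Pi_I,U)$ with $U(t,u)=u(t)$). The difference lies entirely in how you establish the $(\fL_I\otimes\fB_{\mX},\fL_I\otimes\fB_X)$-measurability of $\text{ev}$.

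You anticipate this as the ``technical heart'' and propose a hands-on covering argument for open $B\subset X$, writing $\{(t,u):u(t)\in B\}$ as a countable union of products indexed by rational subintervals of $I$, then invoking a monotone-class argument to pass to all Borel $B$. This would work, and your instinct that second countability of $I$ is what saves the day is correct. But the paper sidesteps this entirely with two standard observations. First, the evaluation map $(t,u)\mapsto u(t)$ is jointly \emph{continuous} from $I\times\mX$ (product topology) to $X$; this is a classical property of the compact-open topology whenever the domain $I$ is locally compact Hausdorff, which it is here. Hence the evaluation map is $(\fB_{I\times\mX},\fB_X)$-measurable. Second, because $I$ is second countable Hausdorff, one has $\fB_{I\times\mX}=\fB_I\otimes\fB_{\mX}$ (the paper cites \cite[Lemma 6.4.2]{Bogachev}), so the evaluation map is in fact $(\fB_I\otimes\fB_{\mX},\fB_X)$-measurable, and a fortiori $(\fL_I\otimes\fB_{\mX},\fB_X)$-measurable. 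Combined with the obvious measurability of the first-coordinate projection, this gives the measurability of $\text{ev}=(\Pi_I,U)$ immediately.

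So there is no gap in your plan, but you are working harder than necessary: the ``delicate step'' you flag dissolves once you recognize that evaluation is jointly continuous and that the product-$\sigma$-algebra identity holds because one factor is second countable. Your covering argument is, in effect, a direct proof of a special case of these two facts combined.
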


\begin{proof}
Consider the projection
\begin{eqnarray*}
\qquad\Pi_I: I\times \mX&\rightarrow &I\\
                (t,u)&\mapsto& t,
\end{eqnarray*}
and the evaluation operator
\begin{eqnarray}\label{eqdefUonIX}
\qquad U: I\times \mX&\rightarrow &X \nonumber\\
                (t,u)&\mapsto& u(t).
\end{eqnarray}
The projection $\Pi_I$ is clearly a $(\fL_I \otimes \fB_{\mX},\fL_I)$-measurable function. Moreover, $U$ is a continuous function and then, in particular, $(\fB_{I\times \mX},\fB_X)$-measurable. Since $\mX$ is a Hausdorff space and $I$ is a second countable  Hausdorff space it follows that $\fB_{I\times\mX} = \fB_I \otimes \fB_{\mX}$ (\cite[Lemma 6.4.2]{Bogachev}). Thus, $U$ is also a $(\fL_I \otimes \fB_{\mX},\fB_X)$-measurable function. From this we obtain that the function $(\Pi_I,U): I \times \mX \rightarrow I\times X$, defined by $(\Pi_I,U)(t,u)=(t,u(t))$, is $(\fL_I\otimes\fB_{\mX},\fL_I\otimes \fB_X)$-measurable.
Now, since $F$ is, by assumption, $(\fL_I\otimes \fB_X,\fB_W)$-measurable, and $G$ can be written as $G=F\circ(\Pi_I, U)$ we conclude that $G$ is  $(\mathcal L_I\otimes\fB_{\mX},\fB_W)$-measurable.
\end{proof}

Now we consider the measurability of the trivial extension of a function $F$ defined on $I\times Z$ to $I\times X$.

\begin{lem}
  \label{lemextF}
  Let $Z, X, W$ be (nonempty) Hausdorff spaces such that $Z\subset X$, with continuous inclusion, and let $I$ be an interval in $\mathds R$. Consider a function $F : I \times Z \rightarrow W$. Let $w$ be an arbitrary element in $W$ and define the extension $\tilde F:I\times X\rightarrow W$ by
\be\label{defextF}
\tilde F(t,u)= \left\{\begin{array}{ll}
                  F(t,u), &\mbox{ if } u\in Z,\\
                  w, &\mbox{otherwise}.
                 \end{array}\right.
\ee
Assume that every Borel subset of $Z$ is a Borel subset of $X$ and that $F$ is a $(\fL_I\otimes\fB_Z,\fB_W)$-measurable function. Then $\tilde F$ is a $(\fL_I\otimes\fB_X,\fB_W)$-measurable function.
\end{lem}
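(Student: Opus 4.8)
The plan is to verify measurability by checking preimages of Borel sets in $W$. Since $\tilde F$ is $(\fL_I\otimes\fB_X,\fB_W)$-measurable if and only if $\tilde F^{-1}(B)\in \fL_I\otimes\fB_X$ for every $B\in\fB_W$, I would fix an arbitrary $B\in\fB_W$ and analyze the preimage by splitting the domain according to whether the second coordinate lies in $Z$ or not.

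First I would write $\tilde F^{-1}(B)$ as a disjoint union of two pieces. On the set $I\times Z$, the function $\tilde F$ agrees with $F$, so the part of the preimage sitting inside $I\times Z$ is exactly $F^{-1}(B)\subset I\times Z$, which by hypothesis belongs to $\fL_I\otimes\fB_Z$. The complementary part, sitting inside $I\times (X\setminus Z)$, is either all of $I\times(X\setminus Z)$ or empty, according to whether the designated element $w$ lies in $B$ or not. Thus I would treat the two cases $w\in B$ and $w\notin B$ separately, but in both cases the relevant pieces are of the same two basic types: a subset of $I\times Z$ coming from $F^{-1}(B)$, and possibly the full slab $I\times(X\setminus Z)$.

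The main step, and the key place where the hypotheses enter, is to argue that each of these pieces lies in $\fL_I\otimes\fB_X$. For this I would use the assumption that every Borel subset of $Z$ is a Borel subset of $X$. This gives an inclusion $\fB_Z\subset\fB_X$, and hence $\fL_I\otimes\fB_Z\subset\fL_I\otimes\fB_X$ (since the product $\sigma$-algebra is generated by measurable rectangles and each generating rectangle on the $Z$-side is now a legitimate rectangle on the $X$-side). Applying this to $F^{-1}(B)\in\fL_I\otimes\fB_Z$ shows $F^{-1}(B)\in\fL_I\otimes\fB_X$. For the complementary slab, I would note that $Z\in\fB_Z$, so by the same hypothesis $Z\in\fB_X$, whence $X\setminus Z\in\fB_X$ and $I\times(X\setminus Z)\in\fL_I\otimes\fB_X$. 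Combining these, $\tilde F^{-1}(B)$ is a finite union of sets in $\fL_I\otimes\fB_X$ and so belongs to $\fL_I\otimes\fB_X$.

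The only real subtlety, and the one point I would be careful about, is the passage $\fB_Z\subset\fB_X\Rightarrow \fL_I\otimes\fB_Z\subset\fL_I\otimes\fB_X$; this is where the hypothesis ``every Borel subset of $Z$ is a Borel subset of $X$'' is genuinely used, rather than mere continuity of the inclusion (continuity alone would only give the reverse-type statement, that the inclusion is Borel measurable from $Z$ to $X$, which is not enough here). Everything else is the routine bookkeeping of splitting a preimage and taking finite unions, so I expect no serious obstacle beyond stating this inclusion of product $\sigma$-algebras cleanly.
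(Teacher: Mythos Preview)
Your proposal is correct and follows essentially the same approach as the paper's own proof: write $\tilde F^{-1}(B)$ as $F^{-1}(B)$ when $w\notin B$ and as $F^{-1}(B)\cup (I\times(X\setminus Z))$ when $w\in B$, then use $\fB_Z\subset\fB_X$ to conclude both that $F^{-1}(B)\in\fL_I\otimes\fB_Z\subset\fL_I\otimes\fB_X$ and that $Z$ (hence $X\setminus Z$) is Borel in $X$. Your additional remark explaining why mere continuity of the inclusion would not suffice is a helpful gloss, but the argument itself matches the paper's exactly.
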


\begin{proof}
Let $E\in \fB_W$ and note that
\[\tilde F^{-1}(E)=\left\{\begin{array}{ll}
     F^{-1}(E)\cup (I\times(X\setminus Z)), &\mbox{ if } w\in E,\\
     F^{-1}(E), &\mbox{ if } w\notin E.
\end{array}\right.\]
Since $F$ is $(\fL_I\otimes\fB_Z,\fB_W)$-measurable then $F^{-1}(E)\in
\fL_I\otimes\fB_Z$. From the hypothesis that $\fB_Z\subset \fB_X$, this implies that $F^{-1}(E)\in
\fL_I\otimes\fB_X$. Moreover, again from $\fB_Z\subset \fB_X$, we have, in particular, that $Z$ is Borel in $X$, so that $X\setminus Z\in \fB_X$, as well, and, hence, $I\times (X\setminus Z) \in \fL_I\otimes\fB_X$. Thus, $\tilde F^{-1}(E)\in \fL_I\otimes \fB_X$, regardless of whether $w$ belongs to $E$ or not. This proves that $\tilde F$ is $(\fL_I\otimes\fB_X,\fB_W)$-measurable.
\end{proof}

Combining Lemmas \ref{lemnemytskiiX} and \ref{lemextF} we obtain the following result.

\begin{prop}
  \label{propnemytskii}
Let $Z, X, W$ be (nonempty) Hausdorff spaces such that $Z\subset X$, with continuous inclusion, and let $I$ be an interval in $\mathds R$. Consider a function $F : I \times Z \rightarrow W$. Let $w$ be an arbitrary element of $W$ and define the associated Nemytskii operator $G:I\times \mX\rightarrow W$ by
\be\label{eqdefnemytskiiop}
G(t,u)= \left\{\begin{array}{ll}
                  F(t,u(t)), &\mbox{ if } u(t)\in Z,\\
                  w, &\mbox{otherwise},
                 \end{array}\right.
\ee
where $\mX=\Cloc(I,X)$. Assume that every Borel subset of $Z$ is a Borel subset of $X$ and that $F$ is a $(\fL_I\otimes\mathcal\fB_Z,\fB_W)$-measurable function. Then $G$ is $(\fL_I\otimes\fB_{\mX},\fB_W)$-measurable.
\end{prop}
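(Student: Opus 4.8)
The plan is to obtain Proposition \ref{propnemytskii} as a direct composition of the two preceding lemmas, since the hypotheses have been arranged precisely so that the output of Lemma \ref{lemextF} is exactly the admissible input for Lemma \ref{lemnemytskiiX}.

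First I would pass from the function $F$, defined only on $I \times Z$, to its trivial extension $\tilde F : I \times X \to W$ given by \eqref{defextF}, using the same element $w \in W$ fixed in the statement. Since we assume that every Borel subset of $Z$ is a Borel subset of $X$ and that $F$ is $(\fL_I \otimes \fB_Z, \fB_W)$-measurable, Lemma \ref{lemextF} applies verbatim and yields that $\tilde F$ is $(\fL_I \otimes \fB_X, \fB_W)$-measurable.

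Next I would feed $\tilde F$ into Lemma \ref{lemnemytskiiX}, which is stated precisely for functions defined on all of $I \times X$. That lemma then gives that the associated Nemytskii operator $\tilde G(t,u) = \tilde F(t, u(t))$ is $(\fL_I \otimes \fB_{\mX}, \fB_W)$-measurable. The remaining, and only genuinely content-bearing, step is to observe that $\tilde G$ coincides with the operator $G$ defined in \eqref{eqdefnemytskiiop}: unwinding the definition \eqref{defextF} of $\tilde F$ at the point $u(t)$, one has $\tilde F(t, u(t)) = F(t, u(t))$ when $u(t) \in Z$ and $\tilde F(t, u(t)) = w$ otherwise, which is exactly the piecewise prescription for $G$. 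Hence $G = \tilde G$ is measurable, as claimed.

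I do not expect a real obstacle at this level, as the substance of the argument is carried entirely by the two lemmas: the transfer of the Borel structure from $Z$ to $X$ (needed in Lemma \ref{lemextF} to keep $F^{-1}(E)$ and $X \setminus Z$ inside $\fL_I \otimes \fB_X$) and the product-measurability identity $\fB_{I \times \mX} = \fB_I \otimes \fB_{\mX}$ together with continuity of the evaluation map (needed in Lemma \ref{lemnemytskiiX}). The one point requiring care is purely bookkeeping: verifying that the pointwise branching ``$u(t) \in Z$'' in \eqref{eqdefnemytskiiop} matches the branching ``$u \in Z$'' built into $\tilde F$ after evaluation, with no almost-everywhere qualification entering, so that $G$ and $\tilde G$ agree for every $(t,u) \in I \times \mX$ rather than merely up to a null set.
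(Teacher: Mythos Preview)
Your proposal is correct and follows exactly the approach the paper intends: the paper states the proposition as an immediate consequence of combining Lemmas \ref{lemnemytskiiX} and \ref{lemextF}, and you have spelled out precisely that combination, including the verification that $G = \tilde G$ pointwise on $I \times \mX$.
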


\section{Abstract Results}\label{secabstractresults}

In this section, we present our abstract framework for the theory of statistical solutions. First, in Section \ref{subsectypesofss}, we give our general definitions of statistical solutions in trajectory space and in phase space. Then, in Sections \ref{subsecexistenceoftrajss} and \ref{subsecexistenceofss} we prove the main results on the existence of these general types of statistical solutions with respect to a given initial data.

\subsection{Types of Statistical Solutions}
\label{subsectypesofss}

We first define statistical solutions in the space of continuous paths $\mX=\Cloc(I,X)$, in a Hausdorff space $X$. They are named \textbf{trajectory statistical solutions}, owing to the fact that they are measures carried by a measurable subset of a certain set $\mU$ in $\mX$ which, in applications, would consist in the set of trajectories, i.e. the set of solutions, in an appropriate sense, of a given evolution equation. At this abstract level, however, there is no evolution equation, and the problem is simply to find a measure carried by a given subset $\mU$ of $\mX$. As such, this is a trivial problem, as showed in Remark \ref{trivialdiracstatsol}. The interesting and difficult problem is the corresponding Initial Value Problem  \ref{ivp_tss}. Nevertheless, we start with the following definition.

\begin{defs}\label{def-stat-sol}
Let $X$ be a Hausdorff space and let $I \subset \mathds R$ be an arbitrary interval. Consider $\mX=\Cloc(I,X)$ and let $\mU$ be a subset of $\mX$. We say that a Borel probability measure $\rho$ on $\mX$ is a \textbf{$\mU$-trajectory statistical solution over $I$} (or simply a \textbf{trajectory statistical solution}) if
\renewcommand{\theenumi}{\roman{enumi}}
\begin{enumerate}
\item $\rho$ is tight;
\item $\rho$ is carried by a Borel subset of $\mX$ included in $\mU$, i.e., there exists $\mV\in\fB_\mX$ such that $\mV\subset \mU$ and $\rho(\mX \setminus \mV)=0$.
\end{enumerate}
\end{defs}

\begin{rmk}
Our abstract definition of a trajectory statistical solution was inspired by the concept of a Vishik-Fursikov measure given in \cite{FRT2013}. Such measures are defined within the context of the Navier-Stokes equations and have the property of being carried by their set of weak solutions, called Leray-Hopf weak solutions. In \cite[Propositions 2.9 and 2.12]{FRT2013} it is proved that the set of Leray-Hopf weak solutions is a Borel set in the corresponding space of continuous paths. However, since we do not know whether this is always the case in every application, we prefer not to assume that $\mU$ is Borel, and assume instead that there exists a Borel subset of $\mU$ that carries the measure $\rho$.
\end{rmk}
\begin{rmk}
\label{urhobarmeasurable}
From the Definition \ref{def-stat-sol}, however, we see that $\mU\setminus \mV$ belongs to the Borel null set $\mX\setminus \mV$, hence we can certainly say that $\mU$ is measurable with respect to the Lebesgue completion of $\rho$, which we denote by $\bar\rho$, and so that $\bar\rho$ is carried by $\mU$.
\end{rmk}

Now we define statistical solutions in phase space. Unlike the definition of a trajectory statistical solution, which is given by a single measure defined on $\mX$, this second type of statistical solution consists in a family of measures defined on the Hausdorff space $X$ and parametrized by an index $t$ varying in an interval $I \subset \mathds R$. For the statistical solutions in phase space, we do need an evolution equation, although minimal hypotheses will be needed, as described in Section \ref{subsecevoleq}. We then have the following definition of statistical solution in phase space:

\begin{defs}\label{def-statsolphasespace}
Let $X$ and $Z$ be Hausdorff spaces and $Y$ be a topological vector space such that $Z\subset X\subset Y_{\rw^*}'$, with continuous injections, where $Y'$ denotes the dual space of $Y$ and $Y_{\rw^*}'$ is the dual space $Y'$ endowed with the weak-star topology. Let $I\subset \mathds{R}$ be an arbitrary interval and consider a function $F: I \times Z \rightarrow Y'$. We say that a family $\{\rho_t\}_{t\in I}$ of Borel probability measures in $X$ is a \textbf{statistical solution in phase space} (or simply a \textbf{statistical solution}) of the evolution equation $u_t = F(t,u)$, over the interval $I$, if the following conditions are satisfied:
\renewcommand{\theenumi}{\roman{enumi}}
\begin{enumerate}
\item\label{ssphasespace1} The function
\[t\mapsto \int_X \varphi(u)\;\rd\rho_t(u)\]
is continuous on $I$, for every $\varphi \in \mC_\rb(X)$.
\item \label{ssphasespace2} For almost every $t\in I$, the measure $\rho_t$ is carried by $Z$ and the function $u \mapsto \langle F(t,u),v\rangle_{Y', Y}$ is $\rho_t$-integrable, for every $v\in Y$. Moreover, the map
\[t\mapsto \int_X \langle F(t,u),v\rangle_{Y',Y}\;\rd\rho_t(u)\]
belongs to $L^1_{\rloc}(I)$, for every $v\in Y$.
\item \label{ssphasespace3} For any cylindrical test function $\Phi$ in $Y'$, it follows that
\be\label{meaneq}
\int_X\Phi(u)\;\rd\rho_t(u)=\int_X\Phi(u)\;\rd\rho_{t'}(u)+\int_{t'} ^t\int_X\langle F(s,u),\Phi'(u)\rangle_{Y',Y}\;\rd\rho_s(u)\;\rd s,
\ee
for all $t,t'\in I$.
\end{enumerate}
\end{defs}

In Definition \ref{def-statsolphasespace}, equation \eqref{meaneq} represents a Liouville-type equation similar to that from statistical mechanics. In order to motivate the definition \eqref{meaneq}, let us suppose that a particular statistical solution $\{\rho_t\}_{t\in I}$ is given in the form of a convex combination of Dirac measures,
\[
\rho_t = \frac{1}{N} \sum_{n=1}^N \delta_{u_n(t)},\quad t \in I,
\]
with equal probability $1/N$, where $N\in \mathbb{N}$, and each $u_n$ is a smooth solution (say continuously differentiable as a function from the time interval $I$ into the space $Y'$) of the system
\[ u_t(t) = F(t,u(t)),\quad  t \in I.
\]
We then formally have
\begin{multline}
\frac{\rd}{\rd s} \int_X \Phi(u) \;\rd\rho_s(u) = \frac{\rd}{\rd s} \frac{1}{N} \sum_{n=1}^N \Phi(u_n(s))
= \frac{1}{N} \sum_{n=1}^N \frac{\rd}{\rd s} \Phi(u_n(s)) \\
= \frac{1}{N} \sum_{n=1}^N \left\langle \frac{\rd}{\rd s} u_n(s), \Phi'(u_n(s)) \right\rangle_{Y',Y}
= \frac{1}{N} \sum_{n=1}^N \langle F(s,u_n(s)), \Phi'(u_n(s)) \rangle_{Y',Y} \\
= \int_X \langle F(s,u), \Phi'(u) \rangle_{Y',Y} \;\rd \rho_s(u).
\end{multline}
Thus, integrating with respect to $s$ on $[t',t]$ yields \eqref{meaneq}.

In Section \ref{subsecexistenceofss} (Theorem \ref{thm_projectedss_is_ss}), we prove that the family of measures obtained as the projections of a trajectory statistical solution at each $t \in I$ on the space $X$ is a statistical solution. This shows that every trajectory statistical solution on $\mX$ yields a statistical solution on $X$. However, the converse is not necessarily true.  We then call a statistical solution for which the converse is valid, i.e., which can be written as the projections on $X$ of a trajectory statistical solution, a \textbf{projected statistical solution}, as given below.

\begin{defs}\label{defprojectstatsol}
Let $X$ and $Z$ be Hausdorff spaces and $Y$ be a topological vector space such that $Z\subset X\subset Y_{\rw^*}'$, with continuous injections, where $Y'$ denotes the dual space of $Y$ and $Y_{\rw^*}'$ is the dual space $Y'$ endowed with the weak-star topology. Let $I\subset \mathds{R}$ be an arbitrary interval and let $\mU$ be a subset of $\mX$. Consider a function $F: I \times Z \rightarrow Y'$. We say that a family $\{\rho_t\}_{t\in I}$ of Borel probability measures on $X$ is a \textbf{statistical solution projected from a $\mU$-trajectory statistical solution} of the evolution equation $u_t = F(t,u)$, over the interval $I$ (or simply a \textbf{projected statistical solution}), when $\{\rho_t\}_{t\in I}$ is a statistical solution of the evolution equation $u_t=F(t,u)$ in the sense of Definition \ref{def-statsolphasespace} and there exists a $\mU$-trajectory statistical solution $\rho$ such that $\rho_t=\Pi_t\rho$, for every $t\in I$.
\end{defs}

\begin{rmk}
  \label{newconditioniiissufficient}
  In Definition \ref{def-statsolphasespace}, condition \eqref{ssphasespace3}, since $\Phi'(u)$ is a linear combination of vectors in $Y$ (see \eqref{eqfrechetderphi}), the condition \eqref{ssphasespace2} is sufficient to guarantee that the second integral on the right hand side of \eqref{meaneq} is well-defined. More details in the proof of Theorem \ref{thm_projectedss_is_ss}.
\end{rmk}

\begin{rmk}
  \label{ZequalsYrmk}
  In most applications, the spaces $Z$ and $Y$ in the Definition \ref{def-statsolphasespace} of phase-space statistical solution are taken to be the same. Nevertheless, this is not a requirement in the abstract setting and we allow for this flexibility. This situation is illustrated in Section \ref{subsecreacdiffeq}.
\end{rmk}

\begin{rmk}
  \label{trivialdiracstatsol}
Note that whenever $\mU$ is a nonempty set, we can always obtain a trajectory statistical solution by considering the Dirac measure $\delta_{u}$, for any element $u\in\mU$ ($\delta_u$ is tight and $\{u\}$ is a Borel set in $\mU$ satisfying $\delta_{u}(\{u\})=1$). A statistical solution can then also be easily obtained by considering the family of projections $\{\delta_{u(t)}\}_{t\in I}$. However, our main concern is not simply the existence of a measure or a family of measures satisfying the properties described in Definitions \ref{def-stat-sol} or \ref{def-statsolphasespace}, respectively. Our aim is to prove the existence of such solutions for an initial value problem, as described below.
\end{rmk}

In the case of trajectory statistical solutions, the initial value problem takes the following form:

\begin{prob}[Initial Value Problem for Trajectory Statistical Solutions]\label{ivp_tss}
Let $I\subset \mathds{R}$ be an interval closed and bounded on the left, with left end point $t_0$, and let $X$ be a Hausdorff space. Let $\mX=\Cloc(I,X)$ be the space of continuous paths in $X$ endowed with the compact-open topology. Let $\mU$ be a given subset of $\mX$. Given an ``initial'' tight Borel probability measure $\mu_0$ on $X$, we look for a $\mU$-trajectory statistical solution $\rho$ on $\mX$ satisfying the initial condition $\Pi_{t_0}\rho = \mu_0$, i.e., we look for a measure $\rho\in\mP(\mX)$ satisfying conditions $(i)$ and $(ii)$ of Definition \ref{def-stat-sol} and such that
  \[\rho(\Pi_{t_0}^{-1}(A))=\mu_0(A)\,,\,\,\forall A\in\fB_X.\]
\end{prob}

The corresponding problem for statistical solutions is stated analogously:

\begin{prob}[Initial Value Problem for Statistical Solutions]\label{ivp_ss}
Let $X$ and $Z$ be Hausdorff spaces and $Y$ be a topological vector space such that $Z\subset X\subset Y_{\rw^*}'$, with continuous injections. Let $I\subset \mathds{R}$ be an interval closed and bounded on the left, with left end point $t_0$. Consider $F: I \times Z \rightarrow Y'$. Given an ``initial'' tight Borel probability measure $\mu_0$ on $X$, we look for a statistical solution $\{\rho_t\}_{t\in I}$ of the equation $u_t = F(t,u)$, over the interval $I$, and satisfying the initial condition $\rho_{t_0}=\mu_0$.
\end{prob}

Note that in the Initial Value Problems \ref{ivp_tss} and \ref{ivp_ss} the interval $I$ is considered as being closed and bounded on the left, with left end point $t_0$. This point $t_0$ represents, of course, the initial time in an application, so that $\mu_0$ is the initial measure.

\subsection{Existence of Trajectory Statistical Solutions}
\label{subsecexistenceoftrajss}

In order to obtain the existence of trajectory statistical solutions in the sense of Definition \ref{def-stat-sol} and satisfying a given initial data (Problem \ref{ivp_tss}), we impose some natural assumptions on the subset $\mU\subset \mX$. In fact, our main existence result for the initial value problem for trajectory statistical solutions reads as follows.

\begin{thm}\label{existencestatsol}
Let $X$ be a Hausdorff space and let $I$ be a real interval closed and bounded on the left with left end point $t_0$. Let $\mU\subset \mX$ be a subset of $\mX=\Cloc(I,X)$. Suppose that
\renewcommand{\theenumi}{{H}\arabic{enumi}}
\begin{enumerate}
  \item \label{eqH1b} $\Pi_{t_0}\mU=X$.
\end{enumerate}
Suppose, moreover, that there exists a family $\fK'(X)$ of compact subsets of $X$ such that
\begin{enumerate}
  \setcounter{enumi}{1}
  \item \label{eqH2bii} Every tight Borel probability measure $\mu_0$ on $X$ is inner regular with respect to the family $\fK'(X)$ in the sense of \eqref{defmureg};
  \item \label{eqH2biii} For every $K\in\fK'(X)$, the subset $\Pi_{t_0}^{-1}K\cap\mU$ is compact in $\mX$.
\end{enumerate}
Then, for any tight Borel probability measure $\mu_0$ on $X$ there exists a $\mU$-trajectory statistical solution $\rho$ on $I$ such that $\Pi_{t_0}\rho=\mu_0$.
\end{thm}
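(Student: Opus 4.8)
The plan is to prove existence first for a discrete initial measure $\mu_0$, then pass to the limit for a general tight Borel probability measure using the compactness (Theorem \ref{thmMXtcompactness}) and Hausdorffness (Theorem \ref{thmMXthausdorff}) results for $\MXt$. The key structural idea is that (H1), the surjectivity $\Pi_{t_0}\mU = X$, lets us lift any point $x\in X$ to a trajectory $u_x\in\mU$ with $u_x(t_0)=x$, and hence lift any Dirac measure $\delta_x$ on $X$ to the Dirac measure $\delta_{u_x}$ on $\mX$, which is trivially a $\mU$-trajectory statistical solution by Remark \ref{trivialdiracstatsol} and satisfies $\Pi_{t_0}\delta_{u_x}=\delta_x$. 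By linearity this handles any discrete measure $\mu_0 = \sum_{j=1}^m \lambda_j \delta_{x_j}$: set $\rho = \sum_j \lambda_j \delta_{u_{x_j}}$, which is tight (a finite convex combination of Dirac measures), carried by the finite — hence Borel — set $\{u_{x_1},\dots,u_{x_m}\}\subset\mU$, and satisfies $\Pi_{t_0}\rho=\mu_0$.

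For a general tight $\mu_0$, I would first use (H2): inner regularity with respect to $\fK'(X)$ produces, for each $\ve>0$, a compact $K\in\fK'(X)$ with $\mu_0(X\setminus K)<\ve$. The Krein--Milman / approximation step from \cite{FMRT, FRT2013} then yields a net $\{\mu_0^\alpha\}$ of discrete measures converging to $\mu_0$ in the weak-star semi-continuity topology, where each $\mu_0^\alpha$ can be arranged to be supported on the approximating compacta and to be uniformly tight (each $\mu_0^\alpha$ inherits from $\mu_0$, via condition (H2), the property that nearly all its mass sits in a fixed $K$). For each $\alpha$ I construct, as above, a discrete trajectory statistical solution $\rho^\alpha$ with $\Pi_{t_0}\rho^\alpha = \mu_0^\alpha$, taking care to choose the lifting points so that each atom $x$ of $\mu_0^\alpha$ lying in $K\in\fK'(X)$ is lifted to a trajectory in $\Pi_{t_0}^{-1}K\cap\mU$.

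The crucial step is to show the net $\{\rho^\alpha\}$ is uniformly tight in $\MXt$, so that Theorem \ref{thmMXtcompactness} furnishes a convergent subnet $\rho^\alpha \wconv \rho$. This is exactly where (H3) enters: given $\ve>0$, pick $K\in\fK'(X)$ with $\mu_0^\alpha(X\setminus K)<\ve$ uniformly in $\alpha$; then by (H3) the set $\mK := \Pi_{t_0}^{-1}K\cap\mU$ is \emph{compact} in $\mX$, and because $\rho^\alpha$ is carried by liftings sitting over $K$ we get $\rho^\alpha(\mX\setminus \mK)\le\mu_0^\alpha(X\setminus K)<\ve$ for all $\alpha$. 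This is the main obstacle, since it is the one place the abstract compactness hypothesis must be leveraged to upgrade compactness of sets of \emph{initial data} to compactness of sets of \emph{trajectories}. With uniform tightness and $\limsup\rho^\alpha(\mX)=1<\infty$ in hand, Theorem \ref{thmMXtcompactness} gives the convergent subnet and Theorem \ref{thmMXthausdorff} guarantees the limit $\rho$ is unique.

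It remains to verify $\rho$ is the desired solution. Tightness of $\rho$ follows since $\rho\in\MXt$. The initial condition $\Pi_{t_0}\rho=\mu_0$ follows because $\Pi_{t_0}$ is continuous, so $\Pi_{t_0}\rho^\alpha \wconv \Pi_{t_0}\rho$; since $\Pi_{t_0}\rho^\alpha = \mu_0^\alpha \wconv \mu_0$, uniqueness of limits in $\MXt$ (Theorem \ref{thmMXthausdorff}) forces $\Pi_{t_0}\rho=\mu_0$. Finally, to see $\rho$ is carried by a Borel subset of $\mU$, I would argue that $\rho$ is carried by the countable union $\bigcup_n \mK_n$ of the compact sets $\mK_n=\Pi_{t_0}^{-1}K_n\cap\mU$ associated with a sequence $\ve_n\to 0$: each $\mK_n$ is compact hence closed hence Borel in $\mX$, each is contained in $\mU$, and by the portmanteau characterization (Lemma \ref{lemportmanteau}, statement (4)) the closed-set inequality $\limsup\rho^\alpha(\mK_n)\le\rho(\mK_n)$ together with $\rho^\alpha(\mX\setminus\mK_n)<\ve_n$ uniformly gives $\rho(\mX\setminus\mK_n)\le\ve_n$, whence $\rho(\mX\setminus\bigcup_n\mK_n)=0$. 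Thus $\mV:=\bigcup_n\mK_n\in\fB_\mX$ satisfies $\mV\subset\mU$ and $\rho(\mX\setminus\mV)=0$, so $\rho$ is a $\mU$-trajectory statistical solution, completing the proof.
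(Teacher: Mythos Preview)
Your overall strategy—lift discrete measures via (H1), use (H3) for compactness in $\mX$, pass to the limit via Theorems \ref{thmMXtcompactness} and \ref{thmMXthausdorff}—matches the paper's, and the argument that $\Pi_{t_0}\rho^\alpha\wconv\Pi_{t_0}\rho$ and the portmanteau step showing $\rho$ is carried by $\bigcup_n\mK_n$ are essentially correct. However, there is a genuine gap in how you treat a general $\mu_0$ not carried by a single $K\in\fK'(X)$.

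You write that the Krein--Milman step ``yields a net $\{\mu_0^\alpha\}$ of discrete measures converging to $\mu_0$ \ldots\ where each $\mu_0^\alpha$ can be arranged to be supported on the approximating compacta and to be uniformly tight.'' This is the unjustified step. Krein--Milman applies inside $\mP(K)$ for a single compact $K$; it produces discrete approximants of $\mu_0|_K/\mu_0(K)$, not of $\mu_0$. If $\mu_0$ is not carried by any one $K\in\fK'(X)$, you cannot simultaneously have all $\mu_0^\alpha$ supported in a fixed $K$ \emph{and} converging to $\mu_0$. Your uniform tightness claim $\mu_0^\alpha(X\setminus K)<\ve$ ``uniformly in $\alpha$'' therefore floats free, and without it the uniform tightness of $\{\rho^\alpha\}$ in $\mX$ fails. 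The paper avoids this by a two-stage argument: first it proves the theorem only for $\mu_0$ carried by a single $K\in\fK'(X)$ (where Krein--Milman applies cleanly in $\mP(K)$ and every $\rho_\alpha$ lives in the single compact $\Pi_{t_0}^{-1}K\cap\mU$); then, for general $\mu_0$, it uses (H2) to pick an increasing sequence $K_n\in\fK'(X)$, writes $\mu_0=\sum_j\mu_0(D_j)\mu_0^j$ with $D_j=K_j\setminus K_{j-1}$ and each normalized piece $\mu_0^j$ carried by $K_j$, applies the first case to each $\mu_0^j$ to obtain $\rho_j$, and sets $\rho=\sum_j\mu_0(D_j)\rho_j$. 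Tightness of this countable sum is then checked directly. Replacing your single limiting argument with this decomposition-and-sum step closes the gap.
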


Before going into the proof of Theorem \ref{existencestatsol}, let us first discuss the hypotheses of the theorem and the main ideas in its proof. 

In the applications to a given evolution equation, the Hausdorff space $X$ plays the role of the phase space associated with the equation, and the set $\mU$ is the set of solutions in a given sense. These solutions are assumed to be continuous functions defined on a real time-interval $I$ and with values in the phase space $X$, so that $\mU$ is a subset of $\mX = \Cloc(I,X)$. 

Then, hypothesis \eqref{eqH1b} is simply a statement about global existence of solutions for every initial condition in $X$. Indeed, it requires that for every initial condition $u_0 \in X$, there is a solution $u \in \mU$, existing for the whole time interval $I$, and satisfying $u(t_0) = u_0$.

Hypotheses \eqref{eqH2bii} and \eqref{eqH2biii} come together and are essentially a compactness condition on the subsets of solutions with initial conditions in sets belonging to a certain family of compact subsets of the phase space which is sufficiently large to approximate from below every tight Borel probability measure on $X$. In many applications, $\fK'(X)$ may be considered as the entire family of compact sets of $X$, so that hypothesis \eqref{eqH2bii} holds trivially, and then hypothesis \eqref{eqH2biii} follows from usual a~priori estimates and some compactness embedding theorem. This is the case, for instance, for the reaction-diffusion equation presented in Section \ref{subsecreacdiffeq}. 

In other applications, however, such as to the incompressible Navier-Stokes equations and the nonlinear wave equation (Sections \ref{subsecnse} and \ref{subsecnonlinearwaveeq}), the compactness of $\Pi_{t_0}^{-1}K \cap \mU$ is not known to hold for every compact set $K$ in $X$. The reason is related to the fact that the solutions are taken to be weakly continuous in time while being strongly continuous at the initial time (with the phase space $X$ being equal to a separable Banach space, in a set-theoretic sense, but endowed with the corresponding weak topology). It is precisely this strong continuity at the initial time which might be lost for the limit points of $\Pi_{t_0}^{-1}K \cap \mU$. In order to overcome this problem, the family $\fK'(X)$ is taken to be slightly smaller, made only of the strongly compact sets. Using the energy inequality, one proves that the strong compactness of $K$ implies that the limit points of $\Pi_{t_0}^{-1}K \cap \mU$ are also strongly continuous at the initial time, so that $\Pi_{t_0}^{-1}K \cap \mU$ is, in fact, compact in $\mX$. On the other hand, since in a separable Banach space the Borel $\sigma$-algebras corresponding to the strong and the weak topologies coincide (see Section \ref{subsecinjectborel}), any Borel probability measure on $X$ (endowed with the weak topology) is also a Borel measure with respect to the strong topology. Moreover, since every separable Banach space is also a Polish space, any Borel probability measure with respect to the strong topology of X is inner regular with respect to the family of (strongly) compact subsets (see Section \ref{subsecelementsofmeastheory}), showing that hypothesis \eqref{eqH2bii} also holds.

Let us now outline the main ideas of the proof itself. Starting with an initial measure $\mu_0$ in $\PXt$, at a given time $t_0$, our intention is to show the existence of a measure $\rho$ which is a trajectory statistical solution satisfying the initial condition $\Pi_{t_0}\rho = \mu_0$. As usual, this measure $\rho$ is obtained from the limit of a convergent net of measures.

We first consider the case when the initial measure $\mu_0$ is carried by a set $K$ in the family $\fK'(X)$. Since $K$ is a compact set in $X$, then by using the Krein-Milman Theorem we obtain a net $\{\mu_0^\alpha\}_\alpha$ of discrete measures converging to $\mu_0$ in $X$. Using hypothesis \eqref{eqH1b}, we can easily extend each discrete initial measure $\mu_0^\alpha$ to a discrete measure $\rho_\alpha$ in $\mX$; we just apply \eqref{eqH1b} to each point in the support of $\mu_0^\alpha$. By construction, each $\rho_\alpha$ is a tight measure carried by $\KU$, which by hypothesis \eqref{eqH2biii} is a compact set. This implies that $\{\rho_\alpha\}_\alpha$ is a uniformly tight net and then Theorem \ref{thmMXtcompactness} is applied to obtain a subnet converging to some tight measure $\rho$, also carried by $\KU$, which is, in particular, a Borel set in $\mX$. Thus $\rho$ is a trajectory statistical solution, in the sense of Definition \ref{def-stat-sol}. The fact that $\rho$ satisfies the initial condition, i.e., $\Pi_{t_0}\rho=\mu_0$, follows easily from the uniqueness of the limits in $\mM(X,\text{tight})$, guaranteed by Theorem \ref{thmMXthausdorff}.

The proof of the case when $\mu_0$ is not carried by any set $K \in \fK'(X)$, can be reduced to the previous case by using the hypothesis that $\mu_0$ is a tight measure and thus, in particular, inner regular with respect to the family $\fK'(X)$ by hypothesis \eqref{eqH2bii}. The idea, then, consists in decomposing $\mu_0$ as a sum of Borel measures, each being carried by a set in $\fK'(X)$. The previous case can be applied to each of these measures (normalized to probability measures), yielding a countable family of $\mU$-trajectory statistical solutions. Our desired measure is therefore obtained as a (weighted) sum of these particular measures.

In the previous discussion, we skipped some technical details concerning the restriction of the approximating measures to convenient compact subsets. If we assumed that our underlying phase space was completely regular, the proof could be made a bit simpler, as these restrictions would no longer be necessary since in completely regular Hausdorff spaces the weak-star semi-continuity topology coincides with the weak-star topology (see Lemma \ref{lemportmanteau}). But again, looking for a higher degree of generality, we assume only that our phase space $X$ is a Hausdorff space.

\begin{proof}[Proof of Theorem \ref{existencestatsol}]
Let us first suppose that $\mu_0$ is carried by a compact set $K$ in $\fK'(X)$. Using the Krein-Milman Theorem (\cite[Theorem 3.23]{RudinFA}) we obtain a net $\{\mu_0^\alpha\}_\alpha$ of discrete measures in $\mP(K)$ such that $\mu_0^\alpha\wconv \mu_0|_K$. Since each $\mu_0^\alpha$ is a discrete measure, there exist $J_\alpha\in\mathds{N}$, $0< \theta_j^\alpha\leq 1$, and $u_{0,j}^\alpha\in K$, such that
\[\mu_0^\alpha=\sum_{j=1}^{J_\alpha}\theta_j^\alpha\delta_{u_{0,j}^\alpha},\]
with $\sum_{j=1}^{J_\alpha} \theta_j^\alpha = 1$, for every $\alpha$.

From the hypothesis \eqref{eqH1b} it follows that, for each $u_{0,j}^\alpha$, there exists $u_j^\alpha\in\mathcal{U}$ such that $\Pi_{t_0}u_j^\alpha=u_{0,j}^\alpha$. Consider the measure $\rho_\alpha$ defined on $\mX$ by
\[\rho_\alpha=\sum_{j=1}^{J_\alpha}\theta_j^\alpha\delta_{u_j^\alpha}.\]

Note that $\rho_\alpha$ belongs to $\mP(\mX,\text{tight})$ and is carried by $\KU$, which is a compact set by the hypothesis \eqref{eqH2biii}. Thus, $\{\rho_\alpha\}_\alpha$ is clearly a uniformly tight net. By Theorem \ref{thmMXtcompactness}, there is a measure $\rho$ in $\mP(\mX,\text{tight})$ such that, by passing to a subnet if necessary,
\be\label{eq6}
\rho_\alpha \wconv \rho\mbox{ in }\mX.
\ee
Moreover, using Lemma \ref{lemportmanteau}, we find that $\rho$ is carried by $\KU$.

Consider a bounded and upper semicontinuous function $\varphi: X \rightarrow \mathds R$. Then $\varphi \circ \Pi_{t_0}$ is a bounded function on $\mX$. Moreover, since $\Pi_{t_0}$ is continuous, $\varphi \circ \Pi_{t_0}$ is also an upper semicontinuous function on $\mX$. Applying a change of variables as in \eqref{eq00} and using the convergence $\rho_\alpha\wconv\rho$ together with Lemma \ref{lemportmanteau}, we obtain that

\begin{multline*}
\limsup_{\alpha} \int_X \varphi(u) \;\rd \Pi_{t_0} \rho_{\alpha}(u) = \limsup_{\alpha} \int_{\mX} (\varphi \circ \Pi_{t_0})(u) \;\rd \rho_{\alpha}(u) \\
    \leq  \int_{\mX} (\varphi \circ \Pi_{t_0})(u) \;\rd \rho(u) = \int_X \varphi(u) \;\rd \Pi_{t_0} \rho(u).
\end{multline*}
This means, using Lemma \ref{lemportmanteau} once again, that $\Pi_{t_0}\rho_\alpha\wconv\Pi_{t_0}\rho$ in $X$. Furthermore, taking the restrictions of these measures to the compact $K$, we also have that $\Pi_{t_0}\rho_\alpha|_K \wconv \Pi_{t_0}\rho|_K$. On the other hand, we have by construction that
\[\Pi_{t_0}\rho_\alpha|_K=\mu_0^\alpha\wconv\mu_0|_K.\]
Adding this to the fact that $\Pi_{t_0}\rho|_K,\mu_0|_K\in\mP(K,\text{tight})$, we obtain, by the uniqueness of the limit guaranteed by Theorem \ref{thmMXthausdorff}, that $\Pi_{t_0}\rho|_K=\mu_0|_K$. Then, since $\Pi_{t_0}\rho$ and $\mu_0$ are carried by $K$ we find that $\Pi_{t_0}\rho = \mu_0$.

We have thus obtained a measure $\rho \in \mP(\mX,\text{tight})$ carried by the compact and hence Borel set $\Pi_{t_0}^{-1}K\cap\mU\subset\mU$, with the initial condition $\Pi_{t_0}\rho = \mu_0$, which means that we have proved the existence of a trajectory statistical solution satisfying the initial condition in the case that $\mu_0$ is carried by a set $K\in\fK'(X)$.

Now let us consider the case when $\mu_0$ is not carried by any set $K\in\fK'(X)$. Since $\mu_0$ is a tight Borel probability measure on $X$, we have, by the hypothesis \eqref{eqH2bii}, that $\mu_0$ is inner regular with respect to the family $\fK'(X)$. Thus, there exists a sequence $\{K_n\}_n$ of sets in $\fK'(X)$ such that
\begin{equation}\label{eq3}
\mu_0(K_{n+1})>\mu_0(K_n)>0, \quad \text{and} \quad \mu_0(X\backslash K_n)<\frac{1}{n}\,,\,\,\forall n\in\mathds{N}.
\end{equation}
Moreover, we may assume that $K_n\subset K_{n+1}$, for all $n\in\mathds{N}$.

Let $D_1=K_1$ and $D_n=K_n\backslash K_{n-1}$, for every $n\geq 2$. Note that
\[\mu_0\left(X\backslash\bigcup_j D_j\right)=\mu_0\left(X\backslash\bigcup_j K_j\right)\leq\mu_0(X\backslash K_n)<\frac{1}{n},\]
for all $n\in\mathds{N}$. Thus, taking the limit as $n\rightarrow\infty$ above, we obtain that $\mu_0$ is carried by $\bigcup_j D_j$. Then, for every $A\in\fB_X$, since the sets $D_j$, $j\in \mathds{N}$, are pairwise disjoint, we have
\[\mu_0(A)=\mu_0\left(A\cap\left(\bigcup_j D_j\right)\right)=\sum_{j=1}^\infty\mu_0(A\cap D_j).\]

So we may decompose $\mu_0$ as
\[\mu_0=\sum_j\mu_0(D_j)\mu_0^j,\]
where $\mu_0^j$ is the Borel probability measure defined as
\[\mu_0^j(A)=\frac{\mu_0(A\cap D_j)}{\mu_0(D_j)}\,,\,\,\forall A\in\fB_X.\]
Note that each $\mu_0^j$ is well-defined, since $\mu_0(D_1)=\mu_0(K_1)>0$ and
\[\mu_0(D_j)=\mu_0(K_j)-\mu_0(K_{j-1})>0\,,\,\,\forall j\geq 2.\]

Also, since each $\mu_0^j$ is carried by the set $K_j\in\fK'(X)$, using the first part of the proof, for each $j\in\mathds{N}$ we obtain a tight Borel probability measure $\rho_j$ carried by $\Pi_{t_0}^{-1}K_j\cap\mathcal{U}$ and such that $\Pi_{t_0}\rho_j=\mu_0^j$. Let then $\rho$ be the Borel probability measure defined by
\[\rho=\sum_j\mu_0(D_j)\rho_j.\]

Observe that
\[\rho\left(\bigcup_l \Pi_{t_0}^{-1}K_l\cap\mathcal{U}\right)=\sum_j\mu_0(D_j)\rho_j(\Pi_{t_0}^{-1}K_j\cap\mathcal{U})=\sum_j\mu_0(D_j)=1,\]
where the first and second equalities follow from the fact that $\rho_j$ is carried by $\Pi_{t_0}^{-1}K_j\cap\mU$. Thus, $\rho$ is carried by $\bigcup_j \Pi_{t_0}^{-1}K_j\cap\mathcal{U}$, which is a Borel set in $\mX$ and is contained in $\mU$. The fact that $\Pi_{t_0}\rho=\mu_0$ is also easily verified.

It only remains to show that $\rho$ is a tight measure. In order to prove so, consider a Borel set $\mA\in\fB_\mX$ and $\varepsilon>0$. Let $n\in\mathds{N}$ be such that $1/n < \varepsilon/2$. Since $\rho_j$ is a tight measure, there exists, for each $1\leq j\leq n$, a compact set $\mK_j^n\subset \mA$ such that
\[\rho_j(\mA\backslash \mK_j^n)<\frac{\varepsilon}{2n}.\]
Let $\mK^n=\bigcup_{1\leq j\leq n}\mK_j^n$, and note that
\begin{eqnarray*}
\rho(\mA\backslash \mK^n)&=&\sum_{j=1}^\infty\mu_0(D_j)\rho_j(\mA\backslash \mK^n)\\
&\leq&\sum_{j=1}^n\rho_j(\mA\backslash \mK^n)+\sum_{j=n+1}^\infty\mu_0(D_j)\\
&<&\frac{\varepsilon}{2}+\mu_0(X\backslash K_n).
\end{eqnarray*}
Thus, according to \eqref{eq3} and the choice of $n$, it follows that $\rho(\mA\backslash \mK^n)<\varepsilon$. Since each $\mK^n$ is a compact set in $\mX$, this proves that $\rho$ is tight.
\end{proof}

\begin{rmk}
Notice that given an initial tight Borel probability measure $\mu_0$ on $X$, if $\mu_0$ is carried by a set $K \in \fK'(X)$, then the trajectory statistical solution $\rho$ with $\Pi_{t_0}\rho = \mu_0$ obtained in the proof of Theorem \ref{existencestatsol} is carried by the Borel set $\Pi_{t_0}^{-1} K \cap \mU$. On the other hand, if $\mu_0$ is not carried by any set $K\in\fK'(X)$, then given any sequence of sets $K_n$ in $\fK'(X)$, $n\in \mathds{N}$, such that $\mu_0(X\setminus K_n) \rightarrow 0$, as $n\rightarrow \infty$, a trajectory statistical solution $\rho$ with $\Pi_{t_0}\rho = \mu_0$ can be constructed such that it is carried by the Borel set $\mU \cap (\bigcup_n \Pi_{t_0}^{-1} K_n)$.
\end{rmk}

The hypotheses in Theorem \ref{existencestatsol} are needed for the solvability of the initial value problem for trajectory statistical solutions for arbitrary initial tight Borel probability measures on $X$. It may happen, however, that the conditions are met only for a subset of tight Borel probability measures on $X$, or, similarly, that the continuity of the solutions only hold in a coarser topology than that in the phase space. In this regard, the same argument of the proof of Theorem \ref{existencestatsol} yields the following result, which we state without further details.

\begin{thm}\label{existencestatsolweaker}
Let $X$ be a Hausdorff space and let $I$ be a real interval closed and bounded on the left with left end point $t_0$. Let $\mU\subset \mX$ be a subset of $\mX=\Cloc(I,X)$ and let $X_0$ be a Borel subset of $X$. Suppose that
\renewcommand{\theenumi}{{H}\arabic{enumi}{'}}
\begin{enumerate}
  \item \label{eqH1bwk} $\Pi_{t_0}\mU \supset X_0$.
\end{enumerate}
Suppose, also, that there exists a family $\fK'(X_0)$ of compact subsets of $X_0$ such that
\begin{enumerate}
  \setcounter{enumi}{1}
  \item \label{eqH2biiwk} Every tight Borel probability measure $\mu_0$ on $X$ which is carried by $X_0$ is inner regular with respect to the family $\fK'(X_0)$ in the sense of \eqref{defmureg};
  \item \label{eqH2biiiwk} For every $K\in\fK'(X_0)$, the subset $\Pi_{t_0}^{-1}K\cap\mU$ is compact in $\mX$.
\end{enumerate}
Then, for any tight Borel probability measure $\mu_0$ on $X$ which is carried by $X_0$, there exists a $\mU$-trajectory statistical solution $\rho$ on $I$ such that $\Pi_{t_0}\rho=\mu_0$.
\end{thm}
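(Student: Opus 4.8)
The plan is to follow verbatim the two-case structure of the proof of Theorem~\ref{existencestatsol}, with the single systematic change that every compact set used to approximate the initial measure is now drawn from the restricted family $\fK'(X_0)$ rather than from a family of compact subsets of all of $X$. This causes no difficulty precisely because $\mu_0$ is assumed to be carried by $X_0$, so the inner-regularity hypothesis \eqref{eqH2biiwk} is available, and because every compact subset of $X_0$ is a fortiori a compact subset of $X$, the lifting hypothesis \eqref{eqH1bwk} applies to each of its points.

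First I would treat the case in which $\mu_0$ is carried by a single set $K\in\fK'(X_0)$. As in Theorem~\ref{existencestatsol}, the Krein-Milman Theorem (\cite[Theorem 3.23]{RudinFA}) produces a net $\{\mu_0^\alpha\}_\alpha$ of discrete measures in $\mP(K)$ with $\mu_0^\alpha \wconv \mu_0|_K$, each of the form $\mu_0^\alpha = \sum_{j=1}^{J_\alpha}\theta_j^\alpha \delta_{u_{0,j}^\alpha}$ with atoms $u_{0,j}^\alpha\in K\subset X_0$. The only modification is invoking \eqref{eqH1bwk} in place of \eqref{eqH1b}: since $u_{0,j}^\alpha\in X_0\subset\Pi_{t_0}\mU$, there is $u_j^\alpha\in\mU$ with $\Pi_{t_0}u_j^\alpha = u_{0,j}^\alpha$, and I set $\rho_\alpha = \sum_{j=1}^{J_\alpha}\theta_j^\alpha\delta_{u_j^\alpha}$. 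Each $\rho_\alpha$ is then carried by $\KU$, compact by \eqref{eqH2biiiwk}, so $\{\rho_\alpha\}_\alpha$ is uniformly tight; Theorem~\ref{thmMXtcompactness} yields a subnet converging in the weak-star semi-continuity topology to a tight measure $\rho$, and Lemma~\ref{lemportmanteau} shows $\rho$ is carried by the same compact, hence Borel, set $\KU\subset\mU$. The identity $\Pi_{t_0}\rho=\mu_0$ follows exactly as before from the continuity of $\Pi_{t_0}$, the portmanteau characterization in Lemma~\ref{lemportmanteau} applied to the bounded upper semicontinuous function $\varphi\circ\Pi_{t_0}$, and the uniqueness of limits in $\mP(K,\text{tight})$ guaranteed by Theorem~\ref{thmMXthausdorff}.

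For the general case, when $\mu_0$ is not carried by any single $K\in\fK'(X_0)$, I would use \eqref{eqH2biiwk}, applicable precisely because $\mu_0$ is carried by $X_0$, to extract an increasing sequence $\{K_n\}_n\subset\fK'(X_0)$ with $\mu_0(X\setminus K_n)\to 0$. Setting $D_1 = K_1$ and $D_n = K_n\setminus K_{n-1}$, the measure $\mu_0$ decomposes as $\sum_j \mu_0(D_j)\mu_0^j$, where each normalized piece $\mu_0^j$ is a tight Borel probability measure carried by $K_j\in\fK'(X_0)$. The first case applies to every $\mu_0^j$, producing $\rho_j$ carried by $\Pi_{t_0}^{-1}K_j\cap\mU$ with $\Pi_{t_0}\rho_j=\mu_0^j$, and I assemble $\rho = \sum_j\mu_0(D_j)\rho_j$. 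That $\rho$ is carried by the Borel set $\mU\cap\bigcup_j\Pi_{t_0}^{-1}K_j$, that $\Pi_{t_0}\rho=\mu_0$, and that $\rho$ is tight are verified by the identical computations as in Theorem~\ref{existencestatsol}, the tightness following from the $\varepsilon/2$ estimate over finitely many compact sets $\mK_j^n$ together with the tail bound $\mu_0(X\setminus K_n)<\varepsilon/2$.

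The point to verify — and the reason the result cannot simply be quoted as a special case of Theorem~\ref{existencestatsol} with phase space $X_0$ — is that the trajectories in $\mU$ take values in all of $X$ and meet $X_0$ only at the initial time through $\Pi_{t_0}$, so the ambient path space $\mX=\Cloc(I,X)$ is unchanged and the argument must be re-run rather than reduced. Once this is recognized there is no genuine obstacle: restricting attention to measures carried by $X_0$ and to compact sets inside $X_0$ never forces an evaluation of \eqref{eqH1bwk} outside $X_0$, since all atoms $u_{0,j}^\alpha$ and all the sets $K_n$ lie in $X_0$ by construction, and no topological or measure-theoretic input beyond that used for Theorem~\ref{existencestatsol} is required.
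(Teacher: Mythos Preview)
Your proposal is correct and is exactly what the paper does: the paper states this theorem ``without further details,'' saying only that ``the same argument of the proof of Theorem~\ref{existencestatsol} yields the following result,'' and you have accurately spelled out the necessary substitutions (invoking \eqref{eqH1bwk} only at atoms lying in $K\subset X_0$, using \eqref{eqH2biiiwk} for the compactness of $\KU$, and using \eqref{eqH2biiwk} to produce the sequence $\{K_n\}$). Your final paragraph explaining why one cannot simply apply Theorem~\ref{existencestatsol} to the phase space $X_0$ is a useful clarification that the paper does not make explicit.
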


\begin{rmk}
  \label{extensionhdoubleprime}
  The extension of the result to the framework described in Theorem \ref{existencestatsolweaker} is motivated by the original framework of Vishik and Fursikov for the Navier-Stokes equations (see e.g. \cite{VF88}), in which, instead of using the weak continuity in $L^2$ of the weak solutions, one uses the continuity in sufficiently large negative powers of the Stokes operator. In this case, one can take, for instance, $X=V'=D(A^{-1/2})$ and $X_0=H_\rw$, where $H$ and $V$ are as in Section \ref{subsecnse}.
\end{rmk}

\subsection{Existence of Statistical Solutions in Phase Space}\label{subsecexistenceofss}

We start with the following lemma.

\begin{lem}\label{lemprojsscarriedbyY}
Let $X$ be a Hausdorff space, $I \subset \mathds R$ be an interval, and $\mX=\Cloc(I,X)$. Let $\rho$ be a Borel probability measure on $\mX$. Suppose that $Z$ is a Borel subset of $X$ and that $\rho$ is carried by a Borel subset $\mV$ of the space $\mZ$ defined in \eqref{defZ}. Then, the projected measure $\rho_t = \Pi_t\rho$ is carried by $Z$, for almost every $t\in I$.
\end{lem}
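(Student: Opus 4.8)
The plan is to reduce the statement to showing that $\int_I \rho_t(X\setminus Z)\,\rd t=0$; since the integrand is nonnegative, this forces $\rho_t(X\setminus Z)=0$ for almost every $t\in I$, which is precisely the assertion that $\rho_t$ is carried by $Z$ for almost every $t$. The whole argument is a Tonelli computation, and the single point requiring care is the joint measurability of the evaluation in $(t,u)$.

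First I would record the needed measurability. Applying Lemma \ref{lemnemytskiiX} with $W=X$ and $F(t,x)=x$ (which is trivially $(\fL_I\otimes\fB_X,\fB_X)$-measurable), the evaluation map $U(t,u)=u(t)$ is $(\fL_I\otimes\fB_{\mX},\fB_X)$-measurable. Since $Z\in\fB_X$ by hypothesis, the set
\[
N=\{(t,u)\in I\times\mX : u(t)\notin Z\}=U^{-1}(X\setminus Z)
\]
lies in $\fL_I\otimes\fB_{\mX}$, so $\chi_N$ is measurable with respect to the product structure.

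Next I would apply Tonelli's theorem to $\chi_N$ against the product of Lebesgue measure on $I$ and the finite measure $\rho$ on $\mX$. Integrating in $u$ first gives, for each fixed $t$,
\[
\int_{\mX}\chi_N(t,u)\,\rd\rho(u)=\rho(\Pi_t^{-1}(X\setminus Z))=\rho_t(X\setminus Z),
\]
which incidentally shows that $t\mapsto\rho_t(X\setminus Z)$ is $\fL_I$-measurable. Integrating in $t$ first gives, for each fixed $u$,
\[
\int_I\chi_N(t,u)\,\rd t=\lambda(\{t\in I : u(t)\notin Z\}).
\]
Because $\rho$ is carried by $\mV\subset\mZ$, for $\rho$-almost every $u$ we have $u\in\mZ$, i.e. $u(t)\in Z$ for almost every $t\in I$; hence the right-hand side above vanishes for $\rho$-almost every $u$. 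Equating the two iterated integrals then yields
\[
\int_I\rho_t(X\setminus Z)\,\rd t=\int_{\mX}\lambda(\{t : u(t)\notin Z\})\,\rd\rho(u)=0.
\]

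Finally, the nonnegativity of $t\mapsto\rho_t(X\setminus Z)$ together with the vanishing of its integral gives $\rho_t(X\setminus Z)=0$ for almost every $t\in I$, completing the proof. I expect the main (indeed the only nonroutine) obstacle to be the joint measurability of $\chi_N$, which is exactly where the assumption that $Z$ be Borel in $X$ enters, by way of the measurability of the evaluation operator from Lemma \ref{lemnemytskiiX}; everything else follows from the defining property of $\mZ$ and Tonelli's theorem.
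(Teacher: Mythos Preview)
Your proposal is correct and follows essentially the same approach as the paper: both arguments apply Tonelli's theorem to the characteristic function of a set defined through the evaluation map $(t,u)\mapsto u(t)$, using that $Z$ is Borel in $X$ for the joint measurability. The only cosmetic differences are that you work with $\chi_{X\setminus Z}$ and integrate over all of $I$, while the paper works with $\chi_Z$ and integrates over compact subintervals $[t_1,t_2]$; neither change is substantive.
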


\begin{proof}
Consider the characteristic function $\chi_Z$ of $Z$ in $X$. Since $\mV$ is such that, for all $u\in \mV$, we have $u(t)\in Z$, for almost every $t\in I$, then $\chi_Z(u(t))=1$, for almost every $t \in I$. Thus,
\be\label{lemrhotcarriedbyY2}
 \int_\mV \int_{t_1}^{t_2} \chi_Z(u(t)) \;\rd t \,\rd \rho(u) = \int_\mV \int_{t_1}^{t_2} 1 \;\rd t \,\rd \rho(u) = (t_2-t_1)\rho(\mV),
\ee
for every $t_1, t_2\in I$, $t_1<t_2$. Notice that the map $(t,u) \mapsto \chi_Z(u(t))$ is the composition of the function $\chi_Z$ with the evaluation operator $U(t,u) = u(t)$ given in \eqref{eqdefUonIX}. Since $Z$ is a Borel set in $X$, then $\chi_Z$ is a Borel function on $X$. Moreover, since the evaluation operator $U$ is continuous and hence Borel from $I\times \mX$ into $X$, it follows that the composition map $(t,u)\mapsto \chi_Z(u(t))$ is also a real-valued Borel function on $I\times \mX$.  Then, we apply Tonelli's Theorem (\cite[Theorem III.11.14]{dunfordschwartz}) to the left-hand side of \eqref{lemrhotcarriedbyY2} to obtain that
\[
\int_{t_1}^{t_2} \int_{\mV} \chi_Z(u(t)) \;\rd \rho(u) \, \rd t = (t_2-t_1)\rho(\mV),
\]
for all $t_1,t_2 \in I$, $t_1<t_2$. Using that $\rho$ is a probability measure carried by $\mV$, we find that
\[ \int_{\mV} \chi_Z(u(t)) \;\rd \rho(u) = 1,
\]
for almost every $t \in I$. Using again that $\rho$ is carried by $\mV$, we see that
\[
\rho_t(Z) = \int_X \chi_Z(v)\rho_t(v) =
\int_{\mX} \chi_Z(u(t)) \rho(u) = \int_{\mV} \chi_Z(u(t)) \;\rd \rho(u) = 1,
\]
for almost every $t\in I$. Therefore, since $\rho_t$ is a probability measure, $\rho_t(X\setminus Z)=0$, and hence $\rho_t$ is carried by $Z$, for almost every $t\in I$.
\end{proof}

Now we prove that the family of measures obtained as the projections of a trajectory statistical solution at each $t \in I$ on $X$ is a statistical solution, in the sense of Definition \ref{def-statsolphasespace}. 

\begin{thm}\label{thm_projectedss_is_ss}
Let $X$ and $Z$ be Hausdorff spaces and $Y$ be a topological vector space such that $Z\subset X\subset Y_{\rw^*}'$, with continuous injections, where $Y'$ denotes the dual space of $Y$ and $Y_{\rw^*}'$ is the dual space $Y'$ endowed with the weak-star topology. Consider an interval $I \subset \mathds R$ and a subset $\mU \subset \mX$, where $\mX=\Cloc(I,X)$. Let $\rho$ be a $\mU$-trajectory statistical solution and let $\mV$ be a Borel subset of $\mX$ such that $\mV\subset \mU$ and $\rho(\mV)=1$. Assume that $\fB_Z \subset \fB_X$; that $\mU \subset \mX_1$,  where $\mX_1$ is defined in \eqref{defXprime}; and that $F: I \times Z \rightarrow Y'$ is an $(\fL_I\otimes\fB_Z,\fB_{Y'})$-measurable function such that \eqref{Ftuislocint} holds and $u_t = F(t,u)$ in the weak sense \eqref{evoleqweak}. Assume, moreover, that
\be\label{intF}
t\mapsto \int_{\mV} \left|\langle F(t,u(t)),v\rangle_{Y',Y}\right|\;\rd\rho(u) \in L^{1}_{\rloc}(I),
\ee
for every $v\in Y$. Then,
\be\label{liouvilleeq}
\int_\mV\Phi(u(t))d\rho(u)=\int_\mV\Phi(u(t'))d\rho(u)+\int_{t'}^t\int_\mV\langle F(s,u(s)),\Phi'(u(s))\rangle_{Y',Y}d\rho(u)ds,
\ee
for all $t,t'\in I$ and for all cylindrical test function $\Phi$ in $Y'$. Moreover, the function
\be\label{intphi}
t\mapsto \int_{\mV}\varphi(u(t))\;\rd \rho(u)
\ee
is continuous on $I$ for every $\varphi \in \mC_\rb(X)$. In particular, the family of projections $\{\rho_t\}_{t\in I}$, where $\rho_t=\Pi_t\rho$, is a statistical solution in phase space of the equation $u_t=F(t,u)$, over the interval $I$, in the sense of Definition \ref{def-statsolphasespace}.
\end{thm}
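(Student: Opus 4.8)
The plan is to prove a per-trajectory version of the Liouville identity first, integrate it over $\mV$ against $\rho$, and only at the end translate the resulting formulas into statements about the projections $\rho_t=\Pi_t\rho$ via the change of variables \eqref{eq00}.

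First I would fix a cylindrical test function $\Phi(w)=\phi(\langle w,v_1\rangle_{Y',Y},\ldots,\langle w,v_k\rangle_{Y',Y})$ and a single trajectory $u\in\mV\subset\mU\subset\mX_1$. By the definition of $\mY_1$ (and since \eqref{Ftuislocint} makes the forms \eqref{evoleqweak}, \eqref{evoleqintegral}, \eqref{evoleqweakae} equivalent), each map $t\mapsto\langle u(t),v_j\rangle_{Y',Y}$ is absolutely continuous with a.e. derivative $\langle F(t,u(t)),v_j\rangle_{Y',Y}$. As $\phi$ is continuously differentiable, the composition $t\mapsto\Phi(u(t))$ is absolutely continuous and the chain rule together with the explicit form \eqref{eqfrechetderphi} of $\Phi'$ gives
\[
\frac{\rd}{\rd t}\Phi(u(t))=\sum_{j=1}^k\partial_j\phi(\ldots)\langle F(t,u(t)),v_j\rangle_{Y',Y}=\langle F(t,u(t)),\Phi'(u(t))\rangle_{Y',Y}
\]
for a.e.\ $t$. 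Integrating over $[t',t]$ yields $\Phi(u(t))-\Phi(u(t'))=\int_{t'}^t\langle F(s,u(s)),\Phi'(u(s))\rangle_{Y',Y}\,\rd s$ for every $u\in\mV$.

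Next I would integrate this identity over $\mV$ against $\rho$. The endpoint terms are finite since $\phi$ has compact support, so $\Phi$ is bounded and $\rho$ is a probability measure. The crux, and the step I expect to be the main obstacle, is exchanging the order of integration on the right-hand side by Fubini's theorem to arrive at \eqref{liouvilleeq}. For measurability I would write $(s,u)\mapsto\langle F(s,u(s)),\Phi'(u(s))\rangle_{Y',Y}$ as $\sum_j\partial_j\phi(\langle u(s),v_1\rangle_{Y',Y},\ldots)\,\langle F(s,u(s)),v_j\rangle_{Y',Y}$: the first factor is continuous on $I\times\mX$ through the evaluation operator \eqref{eqdefUonIX} and the weak-star continuity of $w\mapsto\langle w,v_i\rangle_{Y',Y}$, while the second is $(\fL_I\otimes\fB_\mX)$-measurable by the Nemytskii result Proposition \ref{propnemytskii} (using $\fB_Z\subset\fB_X$ and the assumed measurability of $F$) composed with $\langle\,\cdot\,,v_j\rangle_{Y',Y}$. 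For integrability I would dominate $|\langle F(s,u(s)),\Phi'(u(s))\rangle_{Y',Y}|$ by $C\sum_j|\langle F(s,u(s)),v_j\rangle_{Y',Y}|$, where $C$ bounds the $\partial_j\phi$, and invoke hypothesis \eqref{intF} to place this bound in $L^1$ on $[t',t]\times\mV$. Fubini then gives \eqref{liouvilleeq}. The continuity \eqref{intphi} is comparatively easy: for $\varphi\in\mC_\rb(X)$ and each $u\in\mV$ the map $t\mapsto\varphi(u(t))$ is continuous and bounded, so since $I$ is metrizable I only need sequential continuity, which follows from dominated convergence with the constant dominating function.

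Finally I would assemble the three conditions of Definition \ref{def-statsolphasespace} for the family $\{\rho_t\}$, $\rho_t=\Pi_t\rho$. Applying \eqref{eq00} and using $\rho(\mV)=1$, every integral $\int_X(\cdot)\,\rd\rho_t$ equals the corresponding $\int_\mV(\cdot)(u(t))\,\rd\rho(u)$; thus continuity of \eqref{intphi} gives condition (i), and \eqref{liouvilleeq} gives the mean equation \eqref{meaneq} of condition (iii). For condition (ii), that $\rho_t$ is carried by $Z$ for a.e.\ $t$ is precisely Lemma \ref{lemprojsscarriedbyY}, which applies because $Z$ is Borel in $X$ (from $\fB_Z\subset\fB_X$) and $\mV$ is a Borel subset of $\mZ$ (since $\mV\subset\mU\subset\mX_1\subset\mZ$); the $\rho_t$-integrability of $u\mapsto\langle F(t,u),v\rangle_{Y',Y}$ for a.e.\ $t$ and the membership of $t\mapsto\int_X\langle F(t,u),v\rangle_{Y',Y}\,\rd\rho_t(u)$ in $L^1_\rloc(I)$ then follow from \eqref{intF} after the same change of variables, interpreting $F(t,\cdot)$ through the almost-everywhere defined Nemytskii operator and using that $u(t)\in Z$ for a.e.\ $t$ when $u\in\mV$.
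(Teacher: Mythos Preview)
Your proposal is correct and follows essentially the same approach as the paper: a per-trajectory Liouville identity obtained from the absolute continuity of $t\mapsto\langle u(t),v_j\rangle_{Y',Y}$ and the chain rule, then integration against $\rho$ with Fubini justified via Proposition~\ref{propnemytskii} for measurability and the bound $C\sum_j|\langle F(s,u(s)),v_j\rangle_{Y',Y}|$ together with \eqref{intF} for integrability, continuity of \eqref{intphi} by dominated convergence, and Lemma~\ref{lemprojsscarriedbyY} for the carrier statement in condition~\eqref{ssphasespace2}. The only cosmetic difference is that the paper invokes Lemma~\ref{lemprojsscarriedbyY} at the outset to note that the integrands involving $F(t,u(t))$ are well defined for almost every $t$, whereas you defer this to the end; both orderings are fine since the extended Nemytskii operator handles the definition issue anyway.
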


\begin{proof}
Since $\mV \subset \mU\subset\mX_1$, it follows by the definition of $\mX_1$ that, for every $u\in \mV$, $u(t)\in Z$, for almost every $t\in I$. Since $\fB_Z\subset\fB_X$, we have, in particular, that $Z$ is a Borel subset of $X$. Therefore, it follows from Lemma \ref{lemprojsscarriedbyY} that $\rho_t=\Pi_t\rho$ is carried by $Z$, for almost every $t\in I$. This means that $\rho$ is carried by $\Pi_t^{-1}Z$, for almost every $t \in I$. Hence, the integrals in \eqref{intF} and \eqref{liouvilleeq} with respect to $\rho$ in $\mV$, with integrands containing the mapping $u\in \mV \mapsto F(t,u(t))$, are well-defined almost everywhere in $I$.

Now consider a cylindrical test function $\Phi: Y' \rightarrow \mathds R$ given by
\[
\Phi(u)=\phi(\langle u, v_1\rangle_{Y',Y}, \ldots, \langle u,
v_k\rangle_{Y',Y}), \quad \forall u \in Y',
\]
where $\phi$ is a continuously-differentiable real-valued function on $\mathds R^k$ with compact support, $k\in \mathbb{N}$, and $v_1,\ldots,v_k\in Y$.

For every $u\in \mU$, since $\mU$ is included in the space $\mY_1$ defined in \eqref{defYprime}, the function $t \mapsto \langle u(t), v_j \rangle_{Y',Y}$ is absolutely continuous on $I$ and, for almost every $t \in I$,
\[\frac{\rd}{\rd t}\langle u(t),v_j\rangle_{Y',Y} =
\langle F(t,u(t)), v_j \rangle_{Y' ,Y}, \quad \forall j=1,\ldots, k.\]
Thus,
\begin{multline}\label{eqderPhi}
\frac{\rd}{\rd t}\Phi(u(t))=\sum_{j=1}^k\partial_j\phi(\langle u(t),v_1\rangle_{Y',Y},
\ldots, \langle u(t),v_k\rangle_{Y',Y})\frac{\rd}{\rd t}\langle u(t),v_j\rangle_{Y',Y}\\
=\sum_{j=1}^k\partial_j\phi(\langle u(t),v_1\rangle_{Y',Y}, \ldots,\langle u(t),v_k\rangle_{Y',Y})\langle F(t,u(t)),v_j\rangle_{Y',Y}\\
=\langle F(t,u(t)),\Phi'(u(t))\rangle_{Y' ,Y},
\end{multline}
where $\Phi'$ is the differential of $\Phi$ in $Y'$, given in \eqref{eqfrechetderphi}.

Let us show that, for every $u\in \mU$, the mapping $t \mapsto \Phi (u(t))$ is absolutely continuous on $I$. Since each $\partial_j \phi$ is bounded in $\mathds R^k$, there exists $M > 0$ such that $\| \nabla \phi (\bx)\| \leq M$, for every $\bx \in \mathds R^k$, where $\| \cdot \|$ denotes the norm in $\mathds R^k$. Then, given any finite sequence of pairwise disjoint subintervals $\{(t_j,s_j)\}_{j=1}^N$ in $I$, we obtain, from the Mean Value Theorem,
\begin{multline*}
\sum_{j=1}^N | \phi( \langle u(s_j),v_1 \rangle_{Y',Y}, \ldots, \langle u(s_j),v_k \rangle_{Y',Y}) - \phi( \langle u(t_j),v_1 \rangle_{Y',Y}, \ldots, \langle u(t_j),v_k \rangle_{Y',Y}) | \\
\leq M \sum_{j=1}^N \| ( \langle u(s_j),v_1 \rangle_{Y',Y}, \ldots, \langle u(s_j),v_k \rangle_{Y',Y}) - ( \langle u(t_j),v_1 \rangle_{Y',Y}, \ldots, \langle u(t_j),v_k \rangle_{Y',Y}) \|.
\end{multline*}
Thus, the absolute continuity of the mapping $t \mapsto \Phi(u(t))$ follows by using that each mapping $t \mapsto \langle u(t), v_j \rangle_{Y',Y}$ is absolutely continuous, for $j = 1, \ldots, k$.

Therefore, from \eqref{eqderPhi} we obtain that
\be\label{eqtfcPhi}
\Phi(u(t))=\Phi(u(t'))+\int_{t'}^t\langle F(s,u(s)),\Phi'(u(s))\rangle_{Y' ,Y}ds,
\ee
for every $t,t'\in I$ and every $u \in \mU$.

Consider the function
\begin{eqnarray*}
H : I \times \mX & \rightarrow & \mathds R \\
    (t,u) & \mapsto & \Phi(u(t)).
\end{eqnarray*}
Denote by $\iota$ the continuous injection of $X$ into $Y'_{w*}$ and let $U : I \times \mX \rightarrow X$ be the function defined in \eqref{eqdefUonIX}. Then $H$ can be written as the composition $\Phi \circ \iota \circ U$. And since $\Phi$, $\iota$ and $U$ are continuous functions, so is $H$. Similarly, we obtain that the mapping $(t,u) \mapsto \Phi'(u(t))$ is continuous on $I \times \mX$.

Using the function $G: I \times \mX \rightarrow Y'$ defined in \eqref{eqdefnemytskiiop}, with $W=Y'$ and $w=0$, we write
\[
\langle G(t,u) , \Phi'(u(t))\rangle_{Y' ,Y}
  = \begin{cases} \langle F(t,u(t)),\Phi'(u(t))\rangle_{Y' ,Y}, & \text{if } u(t)\in Z, \\
0, & \text{if } u(t) \in X\setminus Z.
\end{cases} 
\]
Since $F$ is $(\fL_I\otimes\fB_Z,\fB_{Y'})$-measurable by hypothesis, we have from Proposition \ref{propnemytskii} that $G$ is $(\fL_I\otimes\fB_{\mX},\fB_{Y'})$-measurable. Therefore, using also the continuity of the function $p: Y'\times Y \rightarrow \mathds R$ given by
\[
p(u,v) = \langle u, v \rangle_{Y',Y},
\]
it follows that the mapping $(t,u) \mapsto \langle G(t,u),\Phi'(u(t))\rangle_{Y' ,Y}$ is $(\fL_I\otimes\fB_{\mX})$-measurable. Moreover, recall that $\Phi'(u(t))$ is of the form (see \eqref{eqfrechetderphi})
\[
  \Phi'(u(t)) = \sum_{j=1}^k\partial_j\phi(\langle u, v_1\rangle_{Y',Y}, \ldots, \langle u, v_k\rangle_{Y',Y}) v_j,
\]
where $k\in \mathds N$, $v_1,\ldots, v_k \in Y$, and $\phi$ is a continuously differentiable real-valued function on $\mathds R^k$ with compact support. Thus, 
\[
\langle F(t,u(t)),\Phi'(u(t))\rangle_{Y' ,Y} = \sum_{j=1}^k\partial_j\phi(\langle u, v_1\rangle_{Y',Y}, \ldots, \langle u, v_k\rangle_{Y',Y}) \langle F(t,u(t)),v_j\rangle_{Y' ,Y}.
\]
Since the derivatives $\partial_j\phi$ are uniformly bounded, we have that
\[
\left|\langle F(t,u(t)),\Phi'(u(t))\rangle_{Y' ,Y}\right| \leq \sum_{j=1}^k C_j \left|\langle F(t,u(t)),v_j\rangle_{Y' ,Y}\rangle \right|,
\]
for suitable constants $C_1, \ldots, C_k$. Using hypothesis \eqref{intF} and Tonelli's Theorem on nonnegative measurable functions (\cite[Theorem III.11.14]{dunfordschwartz}), we then find that the map
\[ (t,u) \mapsto \langle F(t,u(t)),\Phi'(u(t))\rangle_{Y' ,Y}
\]
is integrable on $I\times \mX$, with respect to the product of the Lebesgue measure and the measure $\rho$.

Thus, from \eqref{eqtfcPhi}, and considering that $\mV\subset \mU$ is a Borel subset of $\mX$, we obtain
\be\label{eqalmostliouville}
\int_\mV\Phi(u(t))\;\rd\rho(u)=\int_\mV\Phi(u(t'))\;\rd\rho(u)+\int_\mV\int_{
t'} ^t\langle G(s,u),\Phi'(u(s))\rangle_{Y' ,Y}\;\rd s \,d\rho(u),
\ee
for every $t,t'\in I$. Applying Fubini's Theorem (\cite[Theorem III.11.9]{dunfordschwartz}) to the second term on the right hand side of \eqref{eqalmostliouville} we find that
\begin{multline}\label{meanevoleq}
\int_\mV\Phi(u(t))d\rho(u) = \int_\mV\Phi(u(t'))d\rho(u)+\int_{t'}^t\int_\mV\langle G(s,u),\Phi'(u(s))\rangle_{Y' ,Y}d\rho(u)ds \\
= \int_\mV\Phi(u(t'))d\rho(u)+\int_{t'}^t\int_\mV\langle F(s,u(s)),\Phi'(u(s))\rangle_{Y' ,Y}d\rho(u)ds,
\end{multline}
for all $t,t'\in I$. This proves the mean equation \eqref{liouvilleeq}.

Now consider a function $\varphi \in \mC_\rb(X)$ and let us prove that the function defined in \eqref{intphi} is continuous on $I$. Given $\tilde{t}\in I$, consider a sequence $\{t_n\}_n$ in $I$ such that $t_n\rightarrow \tilde{t}$. Since every $u\in \mV$ is continuous from $I$ into $X$, it follows that
\[\varphi(u(t_n))\rightarrow \varphi(u(\tilde{t})), \quad \forall u\in\mV.\]
Since $\varphi$ is in particular a bounded function on $X$, we obtain, from the Lebesgue Dominated Convergence Theorem, that
\[\int_{\mV} \varphi(u(t_n))\;\rd\rho(u) \rightarrow \int_{\mV} \varphi(u(\tilde{t}))\;\rd \rho(u).\]
Thus, the function defined in \eqref{intphi} is continuous on $I$. 

Concerning the family $\{\rho_t\}_{t\in I}$ of the projections $\rho_t = \Pi_t\rho$, $t\in I$, it is immediate to see that condition \eqref{ssphasespace1} of Definition \ref{def-statsolphasespace} follows from the continuity of \eqref{intphi}, that condition \eqref{ssphasespace2} of Definition \ref{def-statsolphasespace} follows from hypothesis \eqref{intF}, and that condition \eqref{ssphasespace3} of Definition \ref{def-statsolphasespace} follows from the mean equation \eqref{liouvilleeq}. Therefore, $\{\rho_t\}_{t\in I}$ is a statistical solution in phase space.
\end{proof}

The next result provides a solution for the Initial Value Problem \ref{ivp_ss}. Given an initial measure $\mu_0$ on $X$, we use Theorem \ref{existencestatsol} to obtain a trajectory statistical solution, which is then projected at each time $t$ to yield a family of measures on $X$. Thanks to Theorem \ref{thm_projectedss_is_ss}, this family of projections is a statistical solution.

\begin{thm}\label{thm-existence-projectedss}
Let $X$ and $Z$ be Hausdorff spaces and $Y$ be a topological vector space such that $Z\subset X\subset Y_{\rw^*}'$, with continuous injections, where $Y'$ denotes the dual space of $Y$ and $Y_{\rw^*}'$ is the dual space $Y'$ endowed with the weak-star topology. Let $I \subset \mathds R$ be an interval closed and bounded on the left with left end point $t_0$ and let $\mU$ be a subset of $\mX=\Cloc(I,X)$. Suppose that $\mU$ satisfies the hypotheses \eqref{eqH1b}, \eqref{eqH2bii}, and \eqref{eqH2biii} of Theorem \ref{existencestatsol}, and assume, moreover, that the following conditions hold
\renewcommand{\theenumi}{{H}\arabic{enumi}}
\begin{enumerate}
  \setcounter{enumi}{3}
  \item \label{Hfbzinfbx} $\fB_Z \subset \fB_X$;
  \item \label{HuteqinmU} $\mU \subset \mX_1$, where $\mX_1$ is defined in \eqref{defXprime}, and $F: I \times Z \rightarrow Y'$ is an $(\fL_I\otimes\fB_Z,\fB_{Y'})$-measurable function such that \eqref{Ftuislocint} holds, and $u_t=F(t,u)$ in the weak sense \eqref{evoleqweak};
  \item \label{HFgamma}  there exists a function $\gamma: I \times X \times Y \rightarrow \mathds R$ such that, for every $v\in Y$, the function $(t,u)\mapsto \gamma(t,u,v)$ is $(\fL_I\otimes \fB_X)$-measurable and
\be\label{Fgamma}
   \int_{t_0}^t\left|\langle F(s,u(s)),v\rangle_{Y',Y}\right| \;\rd s\leq \gamma(t,u(t_0),v), \quad \forall t\in I, \;\forall u\in \mU.
\ee
\end{enumerate}
Then, for any tight Borel probability measure $\mu_0$ in $X$ such that
\be\label{gammaL1}
  \int_X \gamma(t,u_0,v)\; d\mu_0(u_0) < \infty, \quad \mbox{for a.e. } t\in I, \mbox{ and for all } v\in Y,
\ee
there exists a projected statistical solution $\{\rho_t\}_{t\in I}$ of $u_t=F(t,u)$, over the interval $I$, associated with a $\mU$-trajectory statistical solution, such that $\rho_{t_0} = \mu_0$.
\end{thm}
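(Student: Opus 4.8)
The plan is to obtain the result by chaining together the two existence theorems already in hand: Theorem~\ref{existencestatsol} produces the trajectory statistical solution, and Theorem~\ref{thm_projectedss_is_ss} turns its time-projections into a statistical solution in phase space. The only genuinely new work is to check that the integrability hypothesis~\eqref{intF} of Theorem~\ref{thm_projectedss_is_ss} follows from \eqref{HFgamma} and \eqref{gammaL1}. Concretely, I would first invoke Theorem~\ref{existencestatsol} --- whose hypotheses \eqref{eqH1b}, \eqref{eqH2bii}, \eqref{eqH2biii} are assumed here --- to get a $\mU$-trajectory statistical solution $\rho$ on $I$ with $\Pi_{t_0}\rho = \mu_0$, and fix a Borel carrier $\mV \in \fB_\mX$ with $\mV \subset \mU$ and $\rho(\mV) = 1$ as in Definition~\ref{def-stat-sol}. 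Hypotheses \eqref{Hfbzinfbx} and \eqref{HuteqinmU} are precisely the remaining structural assumptions of Theorem~\ref{thm_projectedss_is_ss}, so the whole argument reduces to verifying \eqref{intF}.

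For the verification, fix $v \in Y$ and work with the nonnegative map $(s,u) \mapsto |\langle F(s,u(s)),v\rangle_{Y',Y}|$ on $I \times \mX$. Exactly as in the proof of Theorem~\ref{thm_projectedss_is_ss}, Proposition~\ref{propnemytskii} (applicable thanks to \eqref{Hfbzinfbx} and \eqref{HuteqinmU}) together with the continuity of the duality pairing shows this map is $(\fL_I \otimes \fB_\mX)$-measurable. Tonelli's Theorem then lets me integrate first in $u$ and then in time, after which the a~priori bound~\eqref{Fgamma} from \eqref{HFgamma} controls the inner time integral:
\begin{multline*}
\int_{t_0}^{t} \int_{\mV} \left| \langle F(s,u(s)),v\rangle_{Y',Y} \right| \,\rd\rho(u)\,\rd s
= \int_{\mV} \int_{t_0}^{t} \left| \langle F(s,u(s)),v\rangle_{Y',Y} \right| \,\rd s\,\rd\rho(u) \\
\leq \int_{\mV} \gamma(t,u(t_0),v)\,\rd\rho(u).
\end{multline*}
Since $u(t_0) = \Pi_{t_0}u$, $\rho$ is carried by $\mV$, and $\Pi_{t_0}\rho = \mu_0$, the push-forward identity~\eqref{eq00} --- which for the nonnegative integrand $\gamma(t,\cdot,v)$ (nonnegative by \eqref{Fgamma} and \eqref{eqH1b}) holds as an identity in $[0,\infty]$ --- rewrites the last integral as $\int_X \gamma(t,u_0,v)\,\rd\mu_0(u_0)$, which is finite for almost every $t$ by \eqref{gammaL1}.

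Setting $g(t) = \int_{\mV} |\langle F(t,u(t)),v\rangle_{Y',Y}|\,\rd\rho(u)$, the chain of inequalities shows that the nondecreasing function $t \mapsto \int_{t_0}^{t} g(s)\,\rd s$ is finite for almost every $t \in I$; as $g \geq 0$ and $t_0$ is the left endpoint of $I$, this forces $g \in L^1_\rloc(I)$, which is exactly \eqref{intF}. Theorem~\ref{thm_projectedss_is_ss} then yields that $\{\rho_t\}_{t\in I}$, with $\rho_t = \Pi_t\rho$, is a statistical solution in phase space, and the identity $\rho_{t_0} = \Pi_{t_0}\rho = \mu_0$ makes it a projected statistical solution with the prescribed initial datum. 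I expect the only subtle point to be the joint measurability underpinning the use of Tonelli's Theorem --- which is why the Nemytskii-operator measurability of Proposition~\ref{propnemytskii} is essential --- and, secondarily, the care needed to pass from ``finite for almost every $t$'' to genuine local integrability of the averaged right-hand side.
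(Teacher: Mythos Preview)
Your proof is correct and follows essentially the same route as the paper's own argument: invoke Theorem~\ref{existencestatsol} to produce the trajectory statistical solution $\rho$, then verify the integrability condition~\eqref{intF} via Tonelli's Theorem, the bound~\eqref{Fgamma}, and the push-forward identity $\Pi_{t_0}\rho=\mu_0$, before applying Theorem~\ref{thm_projectedss_is_ss}. Your explicit mention of Proposition~\ref{propnemytskii} for the joint measurability and your care in passing from ``finite for almost every $t$'' to $L^1_\rloc(I)$ are useful elaborations on points the paper treats more tersely.
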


\begin{proof}
Since $\mU$ is a subset of $\mX$ satisfying all the hypotheses of Theorem \ref{existencestatsol} and $\mu_0$ is a tight Borel probability measure on $X$, it follows from that theorem that there exists a $\mU$-trajectory statistical solution $\rho$ on $I$ such that $\Pi_{t_0}\rho=\mu_0$.

Since $\rho$ is a $\mU$-trajectory statistical solution, there exists a subset $\mV$ of $\mU$ such that $\mV\in\fB_{\mX}$ and $\rho(\mV)=1$. As in the proof of Theorem \ref{thm_projectedss_is_ss}, it follows from the hypotheses \eqref{Hfbzinfbx} and \eqref{HuteqinmU} that the integrand on the left hand side of \eqref{Fgamma} is integrable with respect to the product of the Lebesgue measure on $I$ and the measure $\rho$ on $\mX$. Then, we take the integral in \eqref{Fgamma} with respect to $\rho$ and apply Tonelli's Theorem (\cite[Theorem III.11.14]{dunfordschwartz}) to find that
\begin{multline*}
\int_{t_0}^t\int_{\mV}\left| \langle F(s,u(s)), v\rangle_{Y',Y}\right| \;\rd\rho(u)\rd s \leq \int_{\mV}\gamma(t,u(t_0),v) \;\rd\rho(u) \\
  =\int_X \gamma(t,u_0,v)\;\rd\Pi_{t_0}\rho(u_0) = \int_X \gamma(t,u_0,v)\;\rd\mu_0(u_0).
\end{multline*}
Thus from \eqref{gammaL1} we obtain that
\be
t\mapsto \int_{\mV}\left| \langle F(t,u(t)), v\rangle_{Y',Y}\right|\;\rd\rho(u) \in L^{1}_{\rloc}(I).
\ee
This proves condition \eqref{intF}, so that, together with \eqref{HuteqinmU}, $F$ is an $(\fL_I\otimes\fB_Z,\fB_{Y'})$-measurable function satisfying all the hypotheses of Theorem \ref{thm_projectedss_is_ss}. All the other hypotheses on $\mU$ and on $\rho$ of Theorem \ref{thm_projectedss_is_ss} are already assumed to be satisfied. Then, it follows from that theorem that the family $\{\rho_t\}_{t\in I}$, with $\rho_t=\Pi_t\rho$, is a statistical solution satisfying $\rho_{t_0}=\Pi_{t_0}\rho=\mu_0$.
\end{proof}

Similarly, we may consider an extension of Theorem \ref{thm-existence-projectedss} with the set of hypotheses \eqref{eqH1bwk}, \eqref{eqH2biiwk}, and \eqref{eqH2biiiwk} described in Theorem \ref{existencestatsolweaker}. In this case, we have the following result, which we state without further comments.

\begin{thm}
  \label{thmextensionhdoubleprimephasespace}
  Consider the framework of Theorem \ref{thm-existence-projectedss} and let $X_0$ be a Borel subset of $X$. Suppose that $\mU$ satisfies, instead, the conditions \eqref{eqH1bwk}, \eqref{eqH2biiwk}, and \eqref{eqH2biiiwk} described in Theorem \ref{existencestatsolweaker}. Assume, moreover, that the conditions \eqref{Hfbzinfbx}, \eqref{HuteqinmU} and \eqref{HFgamma} of Theorem \ref{thm-existence-projectedss} hold.
Then, for any tight Borel probability measure $\mu_0$ in $X$ which is carried by $X_0$ and satisfies \eqref{gammaL1}, there exists a projected statistical solution $\{\rho_t\}_{t\in I}$, associated with a $\mU$-trajectory statistical solution, such that $\rho_{t_0} = \mu_0$.
\end{thm}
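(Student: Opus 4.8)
The plan is to follow the proof of Theorem \ref{thm-existence-projectedss} almost verbatim, replacing only the invocation of Theorem \ref{existencestatsol} by that of Theorem \ref{existencestatsolweaker} in the opening step. The key observation is that conditions \eqref{Hfbzinfbx}, \eqref{HuteqinmU}, \eqref{HFgamma}, together with the integrability \eqref{gammaL1}, are exactly what is needed to verify the single extra hypothesis \eqref{intF} of Theorem \ref{thm_projectedss_is_ss}, and they do so without any reference to \emph{how} the trajectory statistical solution was produced. Thus, once a suitable trajectory statistical solution is in hand, the passage to a projected statistical solution is insensitive to whether \eqref{eqH1b}--\eqref{eqH2biii} or their primed analogues were used to obtain it.

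First I would apply Theorem \ref{existencestatsolweaker}. Since $\mu_0$ is a tight Borel probability measure carried by $X_0$ and $\mU$ satisfies \eqref{eqH1bwk}, \eqref{eqH2biiwk}, and \eqref{eqH2biiiwk}, that theorem yields a $\mU$-trajectory statistical solution $\rho$ on $I$ with $\Pi_{t_0}\rho = \mu_0$. By Definition \ref{def-stat-sol}, there is then a Borel set $\mV\subset \mU$ with $\rho(\mV)=1$.

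Next I would verify \eqref{intF}. Exactly as in the proof of Theorem \ref{thm-existence-projectedss}, hypotheses \eqref{Hfbzinfbx} and \eqref{HuteqinmU} guarantee, through Lemma \ref{lemprojsscarriedbyY} and Proposition \ref{propnemytskii}, that the integrand $(t,u)\mapsto \left|\langle F(t,u(t)),v\rangle_{Y',Y}\right|$ is measurable and integrable with respect to the product of Lebesgue measure on $I$ and the measure $\rho$. Integrating the pointwise bound \eqref{Fgamma} over $\mV$ and applying Tonelli's Theorem gives
\[
\int_{t_0}^t\int_\mV \left|\langle F(s,u(s)),v\rangle_{Y',Y}\right|\;\rd\rho(u)\,\rd s \leq \int_\mV \gamma(t,u(t_0),v)\;\rd\rho(u) = \int_X \gamma(t,u_0,v)\;\rd\mu_0(u_0),
\]
where the last equality uses the change of variables \eqref{eq00} and $\Pi_{t_0}\rho=\mu_0$. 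Condition \eqref{gammaL1} then shows the left-hand side is finite for almost every $t\in I$, which yields \eqref{intF}.

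Finally, with \eqref{intF} established alongside \eqref{Hfbzinfbx} and \eqref{HuteqinmU}, all hypotheses of Theorem \ref{thm_projectedss_is_ss} hold, so the family of projections $\{\rho_t\}_{t\in I}$, $\rho_t=\Pi_t\rho$, is a statistical solution in phase space; it satisfies $\rho_{t_0}=\Pi_{t_0}\rho=\mu_0$, and since it arises from the trajectory statistical solution $\rho$ it is projected, as required. The only point requiring care---and the ``main obstacle'', though a mild one---is checking that Theorem \ref{existencestatsolweaker} genuinely applies under the restriction that $\mu_0$ be carried by $X_0$, since \eqref{eqH2biiwk} only asserts inner regularity for such measures; once the trajectory statistical solution exists, the remainder of the argument is identical to that of Theorem \ref{thm-existence-projectedss}.
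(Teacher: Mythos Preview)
Your proposal is correct and follows precisely the approach the paper intends: the theorem is stated ``without further comments'' because the proof is obtained from that of Theorem \ref{thm-existence-projectedss} by substituting Theorem \ref{existencestatsolweaker} for Theorem \ref{existencestatsol} in the first step, exactly as you describe. The remaining verification of \eqref{intF} via \eqref{Fgamma}, Tonelli, and \eqref{gammaL1}, followed by the appeal to Theorem \ref{thm_projectedss_is_ss}, is unchanged.
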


\begin{rmk}
  \label{rmkconditionwithnormyprime}
  Concerning the hypothesis \eqref{HFgamma} and the condition \eqref{gammaL1} of Theorem \ref{thm-existence-projectedss}, we will, in fact, have, in many applications, that $Y$ is a Banach space, and we will obtain the stronger condition that there exists an $(\fL_I\otimes \fB_X)$-measurable function $\gamma: I \times X \rightarrow \mathds R$ such that
\be\label{Fgammanorm}
   \int_{t_0}^t\|F(s,u(s))\|_{Y'} \;\rd s\leq \gamma(t,u(t_0)), \quad \forall t\in I, \;\forall u\in \mU.
\ee
In this case, the existence of a projected statistical solution for the initial value problem will hold for any initial Borel probability measure $\mu_0$ such that
\be\label{gammaL1norm}
  \int_X \gamma(t,u_0)\; d\mu_0(u_0) < \infty, \quad \mbox{for a.e. } t\in I,
\ee
\end{rmk}

In the following result, we obtain a mean energy inequality for trajectory statistical solutions, provided the individual solutions satisfy a corresponding energy inequality.
\begin{prop}\label{prop-energy-ineq}
Let $X, Z, Y$ as in the statement of Theorem \ref{thm_projectedss_is_ss}. 
Consider $\mU\subset \mX_1$, where $\mX_1$ is defined in \eqref{defXprime}. Let $\rho$ be a $\mU$-trajectory statistical
solution and let $\mV$ be a Borel subset of $\mX=\Cloc(I,X)$ such that $\mV\subset \mU$ and $\rho(\mV)=1$.
Suppose that there exist functions $\alpha: I\times X\rightarrow\mathds R$
and  $\beta: I\times Z\rightarrow\mathds R$ which are $(\fL_I\otimes\mathcal\fB_X,\fB_\mathds{R})$ and $(\fL_I\otimes\mathcal\fB_Z,\fB_\mathds{R})$ measurable, respectively, and satisfy the following conditions
\renewcommand{\theenumi}{\roman{enumi}}
\begin{enumerate}
 \item \label{propenergyineqi} $(t,u)\mapsto
\alpha(t,u(t))$ belongs to $L^1(J\times \mV,\lambda \times \rho)$, for every compact subset $J \subset I$;
 \item \label{propenergyineqii} $(t,u)\mapsto
\beta(t,u(t))$ belongs to $L^1(J\times \mV,\lambda \times \rho)$, for every compact subset $J \subset I$;
 \item \label{propenergyineqiii} For $\rho$-almost every $u\in \mV$ it follows that
\begin{equation}\label{energyineqiii}
 \frac{\rd}{\rd t}\alpha(t,u(t))+\beta(t,u(t))\leq 0,
\end{equation}
in the sense of distributions on $I$, i.e.,
\be\label{abstract_energyineq}
 -\int_{I}\varphi'(s)\alpha(s,u(s))ds+\int_{I}\varphi(s)\beta(s,u(s))ds\leq
0,
\ee
for all nonnegative test functions $\varphi\in \mC_\rc^\infty(I)$.
\end{enumerate}
Then,
\begin{equation}
  \label{meanpropenergyineqiii}
 -\int_{I}\int_{\mV}\varphi'(s)\alpha(s,u(s))d\rho(u)
ds+\int_{I}\int_{\mV}\varphi(s)\beta(s,u(s))d\rho(u) ds\leq 0,
\end{equation}
for all nonnegative test functions $\varphi\in \mC_\rc^\infty(I)$. In terms of the projected statistical solution in phase space $\{\rho_t\}_{t\in I}$, where $\rho_t=\Pi_t\rho$, we can write \eqref{meanpropenergyineqiii} as
\begin{equation}
  \label{meanpropenergyineqiiiprojected}
 -\int_{I}\int_{\mV}\varphi'(s)\alpha(s,u)d\rho_s(u)
ds+\int_{I}\int_{\mV}\varphi(s)\beta(s,u)d\rho_s(u) ds\leq 0.
\end{equation}
\end{prop}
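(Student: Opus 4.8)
The plan is to integrate the individual distributional inequality \eqref{abstract_energyineq} over $\mathcal{V}$ with respect to $\rho$ and then interchange the order of the $\rho$-integral and the time integral by Fubini's Theorem. First I would fix a nonnegative test function $\varphi\in\mathcal{C}_\rc^\infty(I)$ and set $J=\mathrm{supp}\,\varphi$, a compact subinterval of $I$. Since $\varphi$ and $\varphi'$ are continuous and supported in $J$, they are bounded, so by hypotheses \eqref{propenergyineqi} and \eqref{propenergyineqii} the two products $(s,u)\mapsto\varphi'(s)\alpha(s,u(s))$ and $(s,u)\mapsto\varphi(s)\beta(s,u(s))$ belong to $L^1(J\times\mathcal{V},\lambda\times\rho)$. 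This is exactly the integrability needed to apply Fubini's Theorem (\cite[Theorem III.11.9]{dunfordschwartz}) to each term; the underlying joint $(\fL_I\otimes\fB_\mX)$-measurability of these Nemytskii-type compositions is the content of Lemma \ref{lemnemytskiiX} and Proposition \ref{propnemytskii} (the latter using $\fB_Z\subset\fB_X$ for $\beta$).

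Next, by hypothesis \eqref{propenergyineqiii} there is a Borel set $\mathcal{V}_0\subset\mathcal{V}$ with $\rho(\mathcal{V}_0)=1$ on which the scalar inequality
\[
-\int_I\varphi'(s)\alpha(s,u(s))\,\rd s+\int_I\varphi(s)\beta(s,u(s))\,\rd s\leq 0
\]
holds for every $u\in\mathcal{V}_0$, the full-measure set being independent of $\varphi$ since the distributional inequality for a fixed $u$ is a single statement over all test functions. Integrating this over $\mathcal{V}$ against $\rho$, the integrand is nonpositive $\rho$-almost everywhere, so monotonicity of the integral gives
\[
\int_{\mathcal{V}}\left(-\int_I\varphi'(s)\alpha(s,u(s))\,\rd s+\int_I\varphi(s)\beta(s,u(s))\,\rd s\right)\rd\rho(u)\leq 0.
\]
Splitting into the two terms and invoking Fubini on each, as justified above, moves the $\rho$-integral inside the time integral and yields precisely \eqref{meanpropenergyineqiii}.

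Finally, to recast the estimate in terms of the projected family $\rho_s=\Pi_s\rho$ I would apply the push-forward change-of-variables formula \eqref{eq00} with $\Pi_s u=u(s)$: for each fixed $s$, the sections $v\mapsto\alpha(s,v)$ and $v\mapsto\beta(s,v)$ are Borel on $X$ (for $\beta$ this uses $\fB_Z\subset\fB_X$ together with the fact, from Lemma \ref{lemprojsscarriedbyY}, that $\rho_s$ is carried by $Z$ for almost every $s$), so that $\int_{\mathcal{V}}\alpha(s,u(s))\,\rd\rho(u)=\int_X\alpha(s,v)\,\rd\rho_s(v)$ and similarly for $\beta$, for almost every $s\in I$. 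Substituting these identities into \eqref{meanpropenergyineqiii} produces \eqref{meanpropenergyineqiiiprojected}, where the integrals against $\rho_s$ are understood over $X$.

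This argument is mostly routine; the one step that genuinely requires care is the justification of the interchange of integration order, and this is exactly where the $L^1$-integrability hypotheses \eqref{propenergyineqi} and \eqref{propenergyineqii} are indispensable. Without them one could not guarantee that the mixed integrals are finite or that Fubini applies, and the monotonicity-plus-Fubini passage from the individual inequality to the mean inequality would break down.
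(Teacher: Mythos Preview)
Your proposal is correct and follows essentially the same approach as the paper's proof: integrate the individual inequality \eqref{abstract_energyineq} over $\mathcal V$ with respect to $\rho$ and apply Fubini's Theorem, with hypotheses \eqref{propenergyineqi} and \eqref{propenergyineqii} furnishing the required $L^1$ integrability. You have simply expanded the details (measurability via Lemma \ref{lemnemytskiiX} and Proposition \ref{propnemytskii}, the independence of the null set from $\varphi$, and the push-forward argument for \eqref{meanpropenergyineqiiiprojected}) that the paper leaves implicit in its one-line proof.
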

\begin{proof}
The proof follows by integrating \eqref{abstract_energyineq} with respect to $\rho$ on $\mV$ and then applying Fubini's Theorem (\cite[Theorem III.11.9]{dunfordschwartz}) by using hypotheses \eqref{propenergyineqi} and \eqref{propenergyineqii}.
\end{proof}

The motivation for considering the passage from the inequality \eqref{energyineqiii}, valid for individual weak solutions, to the mean inequality \eqref{meanpropenergyineqiii} comes from the Navier-Stokes equations (see Section \ref{subsecnse}) and other similar equations from fluid flows. It also appears in different types of equations, such as the nonlinear wave equation considered in Section \ref{subsecnonlinearwaveeq}. In some situations, however, such as in the case of the Reaction-Diffusion equation considered in Section \ref{subsecreacdiffeq}, the individual weak solutions satisfies in fact an energy-type equality, and of course this equality is similarly passed on to the statistical solutions. In the case of equality, the test functions are allowed to assume negative values. We state this result as follows, omitting the proof since it follows along the same lines as that of Proposition \ref{prop-energy-ineq}.
\begin{prop}\label{prop-energy-eq}
  Under the hypotheses of Proposition \ref{prop-energy-ineq}, if the equality holds in \eqref{energyineqiii}, then the equality holds in \eqref{meanpropenergyineqiii} and \eqref{meanpropenergyineqiiiprojected}, for any test function $\varphi\in \mC_\rc^\infty(I)$, as well.
\end{prop}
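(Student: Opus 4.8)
The plan is to reproduce the proof of Proposition \ref{prop-energy-ineq} almost verbatim, the only new ingredient being that the equality hypothesis removes the sign restriction on the admissible test functions. Indeed, saying that equality holds in \eqref{energyineqiii} in the sense of distributions on $I$ means that, for $\rho$-almost every $u\in\mV$, the distribution $\frac{\rd}{\rd t}\alpha(\cdot,u(\cdot))+\beta(\cdot,u(\cdot))$ is the zero distribution; hence it annihilates \emph{every} $\varphi\in\mC_\rc^\infty(I)$, not merely the nonnegative ones, so that
\[
 -\int_{I}\varphi'(s)\alpha(s,u(s))\,\rd s+\int_{I}\varphi(s)\beta(s,u(s))\,\rd s = 0, \quad \forall \varphi\in\mC_\rc^\infty(I).
\]

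First I would integrate this pointwise identity with respect to $\rho$ over $\mV$; since the identity holds $\rho$-a.e.\ on the full-measure set $\mV$, the integrated expression is still zero. Then I would fix a compact interval $J\subset I$ containing the support of $\varphi$ and invoke Fubini's Theorem (\cite[Theorem III.11.9]{dunfordschwartz}) to interchange the order of integration in each of the two terms. This interchange is justified because hypotheses \eqref{propenergyineqi} and \eqref{propenergyineqii} guarantee that $(s,u)\mapsto\alpha(s,u(s))$ and $(s,u)\mapsto\beta(s,u(s))$ belong to $L^1(J\times\mV,\lambda\times\rho)$ while $\varphi$ and $\varphi'$ are bounded on $J$; thus $\varphi'(s)\alpha(s,u(s))$ and $\varphi(s)\beta(s,u(s))$ are integrable on $J\times\mV$. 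Swapping the integrals yields exactly the equality version of \eqref{meanpropenergyineqiii}.

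The equality form of \eqref{meanpropenergyineqiiiprojected} would then follow from the change-of-variables formula \eqref{eq00} applied to the continuous projection $\Pi_s$: for each fixed $s\in I$ one has $\int_{\mV}\alpha(s,u(s))\,\rd\rho(u)=\int_X\alpha(s,u)\,\rd\rho_s(u)$, and likewise with $\beta$, where $\rho_s=\Pi_s\rho$ and the integration over $\mV$ agrees with that over $\mX$ because $\rho(\mV)=1$. I do not expect a genuine obstacle here, as the argument is routine once Proposition \ref{prop-energy-ineq} is available. The only point deserving attention is that it is precisely the passage from inequality to equality in \eqref{energyineqiii} that allows $\varphi$ to take negative values, and that the $L^1$ bounds \eqref{propenergyineqi}--\eqref{propenergyineqii} are exactly what is needed to justify Fubini's Theorem for such a signed test function.
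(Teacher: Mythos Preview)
Your proposal is correct and follows essentially the same approach as the paper: integrate the pointwise identity over $\mV$ with respect to $\rho$ and apply Fubini's Theorem using hypotheses \eqref{propenergyineqi} and \eqref{propenergyineqii}. In fact the paper omits the proof entirely, remarking only that it ``follows along the same lines as that of Proposition \ref{prop-energy-ineq}''; your write-up supplies precisely those details, including the observation that the passage from inequality to equality is what removes the nonnegativity restriction on $\varphi$.
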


\section{Applications}
\label{secapplications}

In this section we apply the general framework developed previously to obtain the existence of statistical solutions for particular examples of evolution equations.

\subsection{Incompressible Navier-Stokes Equations}
\label{subsecnse}

The Navier-Stokes equations are a commonly used model in the study of turbulent flows. In their three-dimensional form for incompressible Newtonian fluids, these equations are written as
\be\label{nse1}
\frac{\partial\bu}{\partial t}-\nu\Delta\bu+(\bu\cdot\nabla)\bu+\nabla p=\textbf{f},
\ee
\be\label{nse2}
\nabla\cdot\bu=0,
\ee
where $\bu=(u_1,u_2,u_3)$ is the velocity field, $p$ is the kinematic pressure, $\f$ represents a given body force applied to the fluid, and $\nu$ is the parameter of kinematic viscosity. We consider $\bu$, $p$ and $\f$ as functions of a space variable $\bx$ and a time variable $t$, with $\bx$ varying in a set $\Omega \subset \mathds R^3$ and $t$ varying in an interval $I \subset \mathds R$.

For a physical formulation of the equations we refer the reader to the books by Landau and Lifshitz \cite{Landau} and Batchelor \cite{Batchelor}. For the mathematical approach, see Ladyzhenskaya \cite{Lady}, Temam \cite{temam84,temam1995} and Constantin and Foias \cite{bookcf1988}.

Our main concern in this section is to apply the abstract theory developed in the previous sections to prove the existence of statistical solutions to the Navier-Stokes equations \eqref{nse1}-\eqref{nse2}. For more specific discussions on the notion of statistical solutions for the Navier-Stokes equations, we refer the reader to \cite{Foias72, VF78, VF88, FMRT} and also to the more recent paper \cite{FRT2013}. Except for the fact that we allow for a slightly less regular external forcing term, the existence result that we obtain in this section has been already proved in \cite{FRT2013}, a work that has been in fact our inspiration. Nevertheless, we want to show that our abstract framework does apply to this case, with a much simpler proof than that presented in \cite{FRT2013}.

For the sake of simplicity, we assume periodic boundary conditions. In this case we consider a periodic domain given by $\Omega=\Pi_{i=1}^3 (0,L_i)$, where $L_i>0$, for $i=1,2,3$. This means we are assuming the flow is periodic with period $L_i$ in each spatial direction $x_i$. We also consider the averages of the flow and of the forcing term to be zero, i.e.,
\[\int_\Omega\bu(\bx,t)\;\rd\bx=0\,,\,\,\,\,\,\int_\Omega \f(\bx,t)\;\rd \bx=0.\]

Let $C^{\infty}_{per}(\Omega,\mathds R^3)$ denote the space of infinitely
differentiable and $\Omega$-periodic functions $\bu$. We define the set of periodic test functions with vanishing average and divergence free as
\be\label{eqdefmV}
V^\infty:=\left\{\bu\in \mC^{\infty}_{per}(\Omega,\mathds R^3) \,\left|\, \nabla \cdot \bu=0\mbox{ and }\int_\Omega \bu(\bx)\;\rd\bx=0\right.\right\}.
\ee

Let $H$ be the closure of $V^\infty$ in $L^2(\Omega,\mathds R^3)$ and let $V$ be the closure of $V^\infty$ in $H^1(\Omega,\mathds R^3)$.
The inner product and norm in $H$ are defined, respectively, by
\[
(\bu,\bv)=\int_{\Omega}\bu\cdot \bv \;\rd\bx \;\mbox{ and } \quad |\bu| = \sqrt{(\bu,\bu)},
\]
where $\bu\cdot\bv = \sum_{i=1}^3 u_iv_i$. In the space $V$, these are defined as
\[
(\!(\bu, \bv)\!) = (\nabla \bu,\nabla \bv) = \int_\Omega \nabla \bu : \nabla \bv \;\rd \bx \;\mbox{ and } \quad \|\bu\| = \sqrt{(\!(\bu,\bu)\!)},
 \]
where it is understood that $\nabla \bu=(\partial u_i/\partial x_j)_{i,j=1}^3$ and that $\nabla \bu : \nabla \bv$ is the componentwise product between $\nabla \bu$ and $\nabla \bv$. We also consider the space $H$ endowed with its weak topology and denote it by $H_\rw$.

Clearly, $V$ is a subset of $H$. Thus, by identifying $H$ with its dual space $H'$, we obtain the following continuous inclusions
\[ V \subset H \equiv H' \subset V'.
\]
Since the injection $H \hookrightarrow V'$ is a continuous linear mapping, we also have
\[
H_\rw \hookrightarrow V'_{w*}
\]
with continuous injection, where $V'_{w*}$ denotes the space $V'$ endowed with the weak-star topology.

Let $A$ be the \textbf{Stokes operator}, defined as $A=-\mathbb P \Delta$, where $\mathbb P: L^2(\Omega,\mathbb R^3) \rightarrow H$ is the Leray-Helmholtz projection, i.e., the orthogonal projector in  $L^2(\Omega,\mathbb R^3)$ onto the subspace of divergence-free vector fields. We denote by $D(A)$  the domain of $A$, which is defined as the set of functions $\bu \in V$ such that $A\bu\in H$. In the periodic case with zero
average, we have
\[A\bu=-\Delta \bu\,,\,\,\forall \bu \in D(A)= V\cap H^2(\Omega,\mathbb R^3)\]
and $A$ is a positive self-adjoint linear operator with compact inverse, so that it has a nondecreasing sequence $\{\lambda_i\}_{i\in \mathds N}$ of positive eigenvalues counted according to their multiplicity, associated with an orthonormal basis $\{\bw_i\}_{i\in \mathds N}$ in $H$. Furthermore, the Poincar\'e inequality holds, i.e., for all $\bu\in V$,
\begin{eqnarray}\label{poincare}
\lambda_1|\bu|^2\leq \|\bu\|^2,
\end{eqnarray}
where $\lambda_1 > 0$ is the first eigenvalue of the Stokes operator.

The Stokes operator $A$ extends to an operator defined on $V$, with values in $V'$, and so that
\[
\|A\bu\|_{V'} = \sup_{\|\bv\|=1} \langle A\bu,\bv\rangle_{V',V} = \sup_{\|\bv\|=1} (\!(\bu,\bv)\!) = \|\bu\|,
\]
which implies in particular that $A: V\rightarrow V'$ is continuous.

We denote by $P_k: H \rightarrow V$ the Galerkin projector onto the space spanned by the eigenfunctions associated with the first $k$ eigenvalues of the Stokes operator, i.e.
\[
P_k \bu = \sum_{i=1}^k (\bu,\bw_i) \bw_i, \quad \forall \bu \in H.
\]
Since $\{\bw_i\}_{i\in \mathds N}$ is an orthonormal basis in $H$, we obtain
\[
\|P_k \bu\|^2 = (\!( P_k \bu, P_k \bu )\!) = \sum_{j=1}^k |(\bu,\bw_j)|^2 (\!(\bw_j, \bw_j)\!) = \sum_{j=1}^k |(\bu,\bw_j)|^2 \lambda_j \leq \lambda_k |\bu|^2,
\]
for every $\bu\in H$, from which it follows that $P_k$ is continuous from $H$ to $V$.

Let $B: V\times V \rightarrow V'$ be the bilinear operator
\[
B(\bu,\bv) = \mathbb P[(\bu\cdot \nabla)\bv], \quad \forall \bu,\bv \in V,
\]
which satisfies the inequality (see \cite{bookcf1988,FMRT})
\be\label{estB}
\| B(\bu,\bv)\|_{V'} \leq c|\bu|^{1/4}\|\bu\|^{3/4}|\bv|^{1/4}\|\bv\|^{3/4}, \quad \forall \bu,\bv\in V,
\ee
where $c$ is a universal constant. By using this inequality, it is not difficult to see that $B:V\times V \rightarrow V'$ is also a continuous operator.

The natural space for the solutions of the Navier-Stokes equations is the space $\Cloc(I,H_\rw)$ of continuous functions
from an interval $I\subset \mathds R$ to $H_\rw$, endowed with the compact-open topology. This function space is the space of weakly continuous functions from  $I$ to $H$.

The Navier-Stokes equations can be written in the functional form
\be\label{NS}
\bu_t+\nu A\bu+B(\bu,\bu)=\f \quad \text{in } V',
\ee
in the sense of Bochner (see Remark \ref{bochnerint}), which in this case is equivalent to the weak formulation
\[ \frac{\rd}{\rd t} (\bu,\bv) + \nu (\!(\bu,\bv)\!) + ((\bu\cdot\nabla)\bu,\bv) = (\f,\bv),
\]
in the distribution sense on $I$, for any $\bv\in V$.


The notion of solution that is considered here is the well-known Leray-Hopf weak solution, which is defined below.

\renewcommand{\theenumi}{\roman{enumi}}
\begin{defs}\label{NSweak}Let $I$ be an interval in $\mathds R$ and $\f\in
L_{\rloc}^2(I, V')$. We say that $\bu$ is a \textbf{Leray-Hopf weak solution of the Navier-Stokes equations \eqref{nse1}-\eqref{nse2} on $I$} if
\begin{enumerate}
 \item $\bu\in L^{\infty}_{\rloc}(I, H)\cap L_{\rloc}^2(I, V)\cap \mathcal
C_{\rloc}(I,H_\rw)$;

 \item $\partial_t\bu \in   L_{\rloc}^{4/3}(I, V')$;

 \item \label{NSweakeq} $\bu$ satisfies the functional formulation of the Navier-Stokes
equations
\be\label{eq3b}\bu_t+\nu A\bu+B(\bu,\bu)=\f;\ee

 \item $\bu$ satisfies the energy inequality in the sense that for almost
all $t'\in I$ and for all $t\in I$ with $t>t'$,
\begin{equation}\label{energy-ineq}
\frac{1}{2}
|\bu(t)|^2+\nu\int_{t'}^{t}\|\bu(s)\|^2\;\rd s \leq \frac{1}{2}
|\bu(t')|^2+
\int_{t'}^{t}\langle\f(s),\bu(s)\rangle_{V',V}\;\rd s;
\end{equation}

 \item \label{nsedefweaksolv} If $I$ is closed and bounded on the left, with left end point
$t_0$, then the solution is strongly continuous in $H$ at $t_0$ from the right,
i.e., $\bu(t)\rightarrow \bu(t_0)$ in $H$ as $t \rightarrow t_0^+$.
\end{enumerate}
\end{defs}

The set of allowed times $t'$ in \eqref{energy-ineq} can be characterized as the points of strong continuity from the right of $\bu$ in $H$. In particular, condition \eqref{nsedefweaksolv} implies that $t'=t_0$ is allowed in that case.

The Leray-Hopf weak solutions of the Navier-Stokes equations also satisfy a strenghtened form of the energy inequality \eqref{energy-ineq} of Definition \ref{NSweak}. The proof of this strenghtened energy inequality has been given in \cite{FMRT} for external forces $\f$ in $L^2_{\rloc}(I,H)$. This inequality is also valid if $\f$ belongs to the larger space $L^2_{\rloc}(I,V')$, as we state below:

\begin{prop}\label{prop-strenghtened-energy-ineq}
Let $T>0$ and $\f \in L^2(0,T;V')$. Consider a nonnegative, nondecreasing and continuously-differentiable real-valued function  $\psi:[0,\infty)\rightarrow\mathbb{R}$ with bounded derivative. If $\bu$ is a Leray-Hopf weak solution of the Navier-Stokes equations on $[0,T]$, then
\[
\frac{\rd}{\rd t}(\psi(|\bu(t)|^2))\leq 2\psi'(|\bu(t)|^2)[\langle \f(t),\bu(t)\rangle_{V',V}-\nu\|\bu(t)\|^2]
\]
in the sense of distributions on $[0,T]$.
\end{prop}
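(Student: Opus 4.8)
The plan is to reduce the statement to a scalar differential inequality for the energy $y(t) := |\bu(t)|^2$ and then to establish that inequality in the sense of distributions on $[0,T]$ by a difference-quotient argument. First I would set
\[
g(t) := 2\langle \f(t),\bu(t)\rangle_{V',V} - 2\nu\|\bu(t)\|^2
\]
and check that $g \in L^1(0,T)$: indeed $\langle\f,\bu\rangle_{V',V} \le \|\f\|_{V'}\|\bu\|$ is a product of two $L^2(0,T)$ functions, by the Cauchy--Schwarz inequality together with $\bu \in L^2(0,T;V)$ and $\f\in L^2(0,T;V')$, while $\|\bu\|^2 \in L^1(0,T)$ as well. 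Since $\bu$ is weakly continuous into $H$, the Banach--Steinhaus theorem shows that $\{\bu(t)\}_{t\in[0,T]}$ is bounded in $H$, so $y$ is bounded on $[0,T]$ and, using that $\psi'$ is bounded, $\psi'(y)\,g \in L^1(0,T)$. The conclusion of the proposition is then exactly the distributional inequality $\tfrac{\rd}{\rd t}\psi(y) \le \psi'(y)\,g$ on $(0,T)$.

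Next I would record the structural consequence of the energy inequality. Condition \eqref{energy-ineq} of Definition \ref{NSweak} says precisely that $h := y - G$, where $G(t) := \int_0^t g(s)\,\rd s$ is continuous, satisfies $h(t) \le h(t')$ for almost every $t'$ and every $t > t'$. Thus $h$ restricted to the full-measure set $E$ of allowed times is non-increasing, so it agrees almost everywhere with a non-increasing function; consequently $y = h + G$ is of locally bounded variation and is continuous at all but countably many points of $(0,T)$. Moreover, letting $t \to t'^+$ in \eqref{energy-ineq} and combining with the lower semicontinuity of $y$ (a consequence of the weak continuity of $\bu$) shows that $y$ is right-continuous at every allowed time. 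The almost-everywhere continuity of $y$ and the uniform bound on $\psi'$ are exactly what will make the limit pass through.

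To prove the distributional inequality, fix a nonnegative $\varphi \in \mC_\rc^\infty([0,T])$ and write, for $\tau > 0$, after a shift of variable,
\[
-\int_0^T \psi(y(t))\,\varphi'(t)\,\rd t = \lim_{\tau\to 0^+}\int_0^T \frac{\psi(y(t)) - \psi(y(t-\tau))}{\tau}\,\varphi(t)\,\rd t.
\]
For each fixed $\tau$ the point $t-\tau$ is an allowed time for almost every $t$, so the energy inequality gives $y(t) \le y(t-\tau) + \int_{t-\tau}^t g$; since $\psi$ is nondecreasing and $C^1$, the mean value theorem yields $\psi(y(t)) - \psi(y(t-\tau)) \le \psi'(\xi_{t,\tau})\int_{t-\tau}^t g$ for some $\xi_{t,\tau}$ lying between $y(t-\tau)$ and $y(t-\tau)+\int_{t-\tau}^t g$. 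Using $\varphi \ge 0$ and dividing by $\tau$, the right-hand side above is bounded by $\int_0^T \psi'(\xi_{t,\tau})\bigl(\tfrac1\tau\int_{t-\tau}^t g\bigr)\varphi(t)\,\rd t$. Letting $\tau \to 0^+$, Lebesgue differentiation gives $\tfrac1\tau\int_{t-\tau}^t g \to g(t)$ almost everywhere, while $\xi_{t,\tau}\to y(t)$ at each continuity point of $y$, so $\psi'(\xi_{t,\tau}) \to \psi'(y(t))$ almost everywhere; the uniform bound $|\psi'| \le \|\psi'\|_\infty$ together with the $L^1$-convergence of the averages of $g$ then lets me pass to the limit (by a Vitali-type dominated convergence), yielding $-\int \psi(y)\,\varphi' \le \int \psi'(y)\,g\,\varphi$, which is the asserted inequality.

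The main obstacle is the control of the nonlinear limit $\psi'(\xi_{t,\tau}) \to \psi'(y(t))$, which is where the non-smoothness of $t\mapsto |\bu(t)|^2$ enters: it relies on the almost-everywhere continuity of $y$ extracted from the monotone-plus-continuous decomposition, and on the hypothesis that $\psi'$ is bounded to dominate the integrand. Everything else is the standard energy bookkeeping, and the only change relative to the case $\f\in L^2_{\rloc}(I,H)$ treated in \cite{FMRT} is that the forcing term is paired in the $\langle\cdot,\cdot\rangle_{V',V}$ duality rather than the $H$ inner product, which affects nothing in the argument once $g\in L^1(0,T)$ is verified as above. As an alternative one could first regularize $\bu$ in time, or argue on Galerkin approximations where the energy identity holds classically and the chain rule for $\psi$ is elementary, and then pass to the limit; but the direct difference-quotient argument seems cleanest here and uses only the energy inequality together with the stated properties of $\psi$.
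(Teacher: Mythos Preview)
Your argument is correct and reaches the conclusion, but by a different route than the one the paper sketches. The paper (deferring to \cite{TeseCecilia}, in the spirit of \cite{FMRT}) proceeds by mollifying both the outer function $\psi$ and the scalar map $t\mapsto|\bu(t)|^2$ in time, obtaining the inequality first for the regularized objects---where the chain rule applies classically---and then removing the two mollification parameters in a suitable order. You instead work directly with backward difference quotients of $\psi(y)$, use the energy inequality in its integrated form together with the monotonicity of $\psi$ and the mean value theorem, and pass to the limit via the almost-everywhere continuity of $y$ extracted from the decomposition $y=h+G$ with $h$ essentially nonincreasing and $G$ absolutely continuous. Your route is arguably more elementary, avoiding any auxiliary regularization; the mollification route makes the chain-rule step completely transparent and parallels the $\f\in L^2_{\rloc}(I,H)$ case treated in \cite{FMRT}.

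One small point worth tightening: the assertion that $y$ itself is of bounded variation and continuous at all but countably many points is, strictly speaking, true only for the good representative $\tilde y$ obtained by extending $h|_E$ monotonically to all of $[0,T]$ and adding $G$; on the null set where $t-\tau\notin E$ the pointwise values $y(t-\tau)$ are not controlled by the energy inequality. Since every integral in your argument is insensitive to changes on null sets, replacing $y$ by $\tilde y$ throughout (so that $\tilde\xi_{t,\tau}\to \tilde y(t)=y(t)$ at each continuity point of $\tilde y$, hence almost everywhere) fixes this with no change to the structure. A second cosmetic point: the mean value theorem produces $\xi_{t,\tau}$ whose measurable dependence on $t$ is not automatic; writing $\psi(b)-\psi(a)=(b-a)\int_0^1\psi'(a+s(b-a))\,\rd s$ instead sidesteps this.
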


The idea of the proof is to first obtain such inequality for mollifications of $\psi$ and $|\bu(t)|^2$, and then to pass to the limit with respect to the parameters of each mollification, in a suitable order. For the detailed proof, see \cite{TeseCecilia}.

Given $R > 0$, we denote by $B_H(R)$ the closed ball centered at the origin and with radius $R$ in $H$. The corresponding closed ball endowed with the weak topology is denoted by $B_H(R)_\rw$. We define the following sets of Leray-Hopf weak solutions:
\be\label{u}
\mU_I=\{\bu\in \Cloc(I,H_\rw): \bu \mbox{ is a Leray-Hopf
weak solution on } I\},
\ee
\be
\mU_I(R)=\{\bu\in  \Cloc(I,B_H(R)_\rw): \bu \mbox{ is a Leray-Hopf
weak solution on } I\},
\ee
\be
\mU^\sharp_I=\{\bu\in \Cloc(I,H_\rw): \bu \mbox{ is a Leray-Hopf
weak solution on } \mathring{I}\},
\ee
\be\label{defmUIsharpR}
\mU^\sharp_I(R)=\{\bu\in  \Cloc(I,B_H(R)_\rw): \bu \mbox{ is a
Leray-Hopf weak solution on } \mathring{I}\},
\ee
where $\mathring{I}$ denotes the interior of the interval $I$.

From \eqref{energy-ineq}, one obtains the classical estimates
\be\label{ener-est}|\bu(t)|^2\leq|\bu(t')|^2+\frac{1}{\nu}\|\f\|^2_{L^2(t',t;V')},\ee
\be\label{nsest2}\int_{t'}^t\|\bu(s)\|^2\;\rd s\leq\frac{1}{\nu}|\bu(t')|^2+\frac{1}{\nu^2}\|\f\|^2_{L^2(t',t;V')},\ee
valid for any $\bu\in \mU_I$, for any time $t'\in I$ allowed in \eqref{energy-ineq} and for any $t\in I$ with $t\geq t'$. Moreover, using \eqref{estB}, one also has that
\be\label{nsest3}
\left(\int_{t'}^t\|\partial_t\bu(s)\|_{V'}^{4/3}\;\rd s\right)^{3/4}\leq \frac{c}{\nu^{3/4}}|\bu(t')|^2+\frac{\nu^{5/4}}{\lambda_1^{1/2}}D(t',t),\ee
where $c$ is a universal constant and $D(t',t)$ is a nondimensional function which depends on the variables $t',t$ and also on the parameters $\nu$, $\lambda_1$, $\Omega$, and $\f$ through the nondimensional quantities $\nu\lambda_1|t-t'|$ and $(\lambda_1^{1/4}/\nu^{3/2})\|\f\|_{L^2(t',t;V')}$.

The a~priori estimates \eqref{ener-est}-\eqref{nsest3} allow us to prove that $\mU_I^\sharp(R)$ is a compact and metrizable space, in the same way as it was done in \cite[Proposition 2.2]{FRT2013}. Furthermore, one can show that $\mU_I^\sharp(R)$ is the closure of the space $\mU_I(R)$ with respect to the topology of $\mC(I,H_\rw)$.

The existence of a Leray-Hopf weak solution on a given interval $I\subset \mathds{R}$ is obtained using the estimates \eqref{ener-est}, \eqref{nsest2}, and \eqref{nsest3}. This proof is a classical result and can be found in many well-known texts \cite{bookcf1988,Lady,Lions,temam1995}. We state it below for completeness.

\begin{thm}\label{nsexistence}
Let $I\subset\mathds{R}$ be an interval closed and bounded on the left with left end point $t_0$ and let $f\in L^2_{\rloc}(I,V')$. Then, given $\bu_0 \in H$, there exists at least one weak solution $\bu\in \mU_I$ of \eqref{nse1}-\eqref{nse2} in the sense of Definition \ref{NSweak} satisfying $\Pi_{t_0}\bu = \bu_0$.
\end{thm}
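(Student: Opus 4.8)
The plan is to construct the solution by the Galerkin method: I build finite-dimensional approximations, extract uniform a~priori bounds that make them global on $I$ and precompact in a suitable sense, pass to the limit, and then verify each of the five conditions of Definition~\ref{NSweak} in turn.

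First I would use the orthonormal basis $\{\bw_i\}_{i\in\mathds N}$ of eigenfunctions of the Stokes operator $A$ and seek approximations $\bu_k(t)=\sum_{i=1}^k g_{k,i}(t)\bw_i$ solving the projected system $(\partial_t\bu_k,\bw_i)+\nu(\!(\bu_k,\bw_i)\!)+\langle B(\bu_k,\bu_k),\bw_i\rangle_{V',V}=\langle\f,\bw_i\rangle_{V',V}$, for $i=1,\ldots,k$, with initial data $\bu_k(t_0)=P_k\bu_0$. The right-hand side is polynomial in the coefficients $g_{k,i}$, so Cauchy--Lipschitz gives a unique local solution. Testing against $\bu_k$ and using the orthogonality $\langle B(\bu_k,\bu_k),\bu_k\rangle_{V',V}=0$ gives the energy equality for $\bu_k$, which after integration and the use of \eqref{poincare}, Young's inequality, and \eqref{estB} reproduces exactly the a~priori estimates \eqref{ener-est}, \eqref{nsest2}, and \eqref{nsest3}, uniformly in $k$. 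These bounds rule out finite-time blow-up of the coefficients, so the $\bu_k$ are global on $I$, bounded in $L^\infty_{\rloc}(I,H)\cap L^2_{\rloc}(I,V)$ with $\partial_t\bu_k$ bounded in $L^{4/3}_{\rloc}(I,V')$.

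The heart of the argument, and the step I expect to be the main obstacle, is passing to the limit in the nonlinear term $B(\bu_k,\bu_k)$. Weak and weak-star compactness alone yield a subsequence with $\bu_k\rightharpoonup\bu$ in $L^2_{\rloc}(I,V)$, $\bu_k\wsconv\bu$ in $L^\infty_{\rloc}(I,H)$, and $\partial_t\bu_k\rightharpoonup\partial_t\bu$ in $L^{4/3}_{\rloc}(I,V')$, but weak convergence does not control the quadratic term. To remedy this I would invoke the Aubin--Lions--Simon compactness lemma: since $V\hookrightarrow H\hookrightarrow V'$ with the first injection compact, the bound on $\bu_k$ in $L^2_{\rloc}(I,V)$ together with the bound on $\partial_t\bu_k$ in $L^{4/3}_{\rloc}(I,V')$ gives, along a further subsequence, strong convergence $\bu_k\to\bu$ in $L^2_{\rloc}(I,H)$. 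Combined with \eqref{estB}, this lets me pass to the limit in the bilinear term tested against any fixed $\bv\in V^\infty$, so that $\bu$ satisfies \eqref{NS} in $V'$ in the distribution sense, giving condition~(iii) and, through the bound on $\partial_t\bu$, condition~(ii).

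It remains to verify the other conditions. The weak continuity $\bu\in\Cloc(I,H_\rw)$ in~(i) follows from $\bu\in L^\infty_{\rloc}(I,H)$ together with $\partial_t\bu\in L^{4/3}_{\rloc}(I,V')$ by the standard lemma identifying such a function with an (a.e. equal) weakly continuous $H$-valued one. For the energy inequality~\eqref{energy-ineq} I would pass to the limit in the energy equality of each $\bu_k$ on $[t_0,t]$, whose initial term equals $\frac{1}{2}|P_k\bu_0|^2$; since $P_k\bu_0\to\bu_0$ strongly in $H$ and both the $H$-norm and the $L^2$-in-time $V$-norm are weakly lower semicontinuous, this yields~\eqref{energy-ineq} with $t'=t_0$ for every $t\in I$, the general case for almost every $t'$ being standard. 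Finally, this choice of Galerkin data gives $\bu(t_0)=\bu_0$, that is $\Pi_{t_0}\bu=\bu_0$, and the strong right-continuity in~(v) follows by combining the weak lower semicontinuity bound $|\bu_0|\le\liminf_{t\to t_0^+}|\bu(t)|$ with the energy inequality at $t'=t_0$, which gives $\limsup_{t\to t_0^+}|\bu(t)|\le|\bu_0|$; convergence of the norms together with weak convergence in the Hilbert space $H$ then yields $\bu(t)\to\bu_0$ strongly in $H$ as $t\to t_0^+$.
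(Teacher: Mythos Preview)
Your proposal is correct and follows precisely the classical Galerkin approach that the paper has in mind: the paper does not actually prove Theorem~\ref{nsexistence} but merely states that ``this proof is a classical result and can be found in many well-known texts \cite{bookcf1988,Lady,Lions,temam1995}'' and is ``obtained using the estimates \eqref{ener-est}, \eqref{nsest2}, and \eqref{nsest3}.'' Your sketch is exactly that classical argument, with the a~priori estimates, Aubin--Lions compactness, and the standard verification of weak continuity, the energy inequality, and strong right-continuity at $t_0$.
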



From now on, we assume that $I\subset\mathds{R}$ is an interval closed and bounded on the left, with left end point $t_0$. Under this assumption, the energy inequality \eqref{energy-ineq} is valid for $t'=t_0$.

Consider a compact subinterval $J \subset I$. Then given $\bu \in \mU_I$ such that $\bu(t_0) \in B_H(R)$ for some $R\geq 0$, it follows, from \eqref{ener-est} with $t'=t_0$, that there exists $\tilde{R} \geq R$ such that $\bu(t) \in B_H(\tilde{R})$, for every $t \in J$. Thus, the restriction of $\bu$ to $J$ belongs to $\mU_J(\tilde{R})$.

In order to prove the existence of a trajectory statistical solution for the Navier-Stokes equations satisfying a given initial data, we shall apply Theorem \ref{existencestatsol} by considering $X$ as the space $H_\rw$ and the general set $\mU$ as the set of weak solutions $\mU_I$. We now show that $\mU_I$ satisfies the hypotheses of Theorem \ref{existencestatsol}.

First, note that hypothesis \eqref{eqH1b} is a direct consequence of Theorem \ref{nsexistence}. Also, defining $\fK'(H_\rw)$ as the family of (strongly) compact sets in $H$, hypothesis \eqref{eqH2bii} follows from the fact that $H$ is a separable Banach space (see Sections \ref{subsecelementsofmeastheory} and \ref{subsecinjectborel}). The only hypothesis that needs more work is the remaining hypothesis \eqref{eqH2biii}, which is the subject of the next proposition.

\begin{prop}\label{prop-KUI-compact}
Let $I \subset \mathds{R}$ be an interval closed and bounded on the left with left end point $t_0$ and let $K$ be a strongly compact set in $H$. Then $\KUI$ is a compact set in $\mX = \Cloc(I,H_\rw)$.
\end{prop}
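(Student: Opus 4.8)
The plan is to establish compactness by first reducing to compact time-subintervals, where the already-established compactness of $\mU_J^\sharp(R)$ is available, and then to recover the global statement by an inverse-limit (gluing) argument. Being strongly compact, $K$ is bounded, say $K\subset B_H(R_0)$. I would fix a compact subinterval $J=[t_0,T]\subset I$. By the energy estimate \eqref{ener-est} with $t'=t_0$, every $\bu\in\KUI$ satisfies $|\bu(t)|\leq R_J$ on $J$ for a radius $R_J$ depending only on $R_0$, $\nu$ and $\|\f\|_{L^2(t_0,T;V')}$; hence each restriction $\bu|_J$ lies in $\Pi_{t_0}^{-1}K\cap\mU_J\subset\mU_J^\sharp(R_J)$, which is compact and metrizable. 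It therefore suffices to show that $\Pi_{t_0}^{-1}K\cap\mU_J$ is \emph{closed} in $\mU_J^\sharp(R_J)$, a closed subset of a compact set being compact. Finally, since the compact-open topology on $\mX=\Cloc(I,H_\rw)$ is the initial topology induced by the restriction maps onto an exhausting sequence of compact subintervals $I_m\uparrow I$ (with $t_0\in I_m$), the set $\KUI$ is the inverse limit of the compact sets $\Pi_{t_0}^{-1}K\cap\mU_{I_m}$ under restriction, and an inverse limit of compact Hausdorff spaces is compact; this delivers the theorem once the compact-interval case is settled.

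For the compact-interval case, let $\{\bu_n\}$ be a sequence in $\Pi_{t_0}^{-1}K\cap\mU_J$ converging in the metrizable space $\mU_J^\sharp(R_J)$ to some $\bu$. Because $\mU_J^\sharp(R_J)$ is closed, $\bu$ is automatically a Leray-Hopf weak solution on the interior $\mathring J$, so conditions (i)--(iv) of Definition \ref{NSweak} hold; what remains is to check that $\bu(t_0)\in K$ and that $\bu$ is strongly right-continuous at $t_0$. The membership is immediate: by strong compactness of $K$ a subsequence $\bu_{n_k}(t_0)$ converges strongly in $H$ to some $\xi\in K$, while $\bu_{n_k}(t_0)\rightharpoonup\bu(t_0)$ weakly, so $\xi=\bu(t_0)$ by uniqueness of the weak limit, giving $\bu(t_0)\in K$ and, moreover, $|\bu_{n_k}(t_0)|^2\to|\bu(t_0)|^2$.

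The main obstacle is the strong right-continuity at $t_0$, precisely the property that can be lost in a weak limit. I would recover it by passing the energy inequality to the limit. Each $\bu_n$ is strongly continuous at $t_0$, so $t'=t_0$ is admissible in \eqref{energy-ineq} and, for every $t\in J$,
\[
\frac12|\bu_n(t)|^2+\nu\int_{t_0}^t\|\bu_n(s)\|^2\,\rd s\leq\frac12|\bu_n(t_0)|^2+\int_{t_0}^t\langle\f(s),\bu_n(s)\rangle_{V',Y}\,\rd s .
\]
Along the subsequence above, $|\bu_{n_k}(t_0)|^2\to|\bu(t_0)|^2$; the bound \eqref{nsest2} makes $\{\bu_{n_k}\}$ bounded in $L^2(t_0,t;V)$, so a further subsequence converges weakly there, necessarily to $\bu$, whence $\int_{t_0}^t\langle\f,\bu_{n_k}\rangle\to\int_{t_0}^t\langle\f,\bu\rangle$ (as $\f\in L^2(t_0,t;V')$) and $\liminf_k\int_{t_0}^t\|\bu_{n_k}\|^2\geq\int_{t_0}^t\|\bu\|^2$ by weak lower semicontinuity of the norm, while $\liminf_k|\bu_{n_k}(t)|^2\geq|\bu(t)|^2$ from $\bu_{n_k}(t)\rightharpoonup\bu(t)$. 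Taking $\liminf_k$ yields the energy inequality for $\bu$ with $t'=t_0$, valid for all $t\in J$. Dropping the nonnegative dissipation term and letting $t\to t_0^+$, the absolutely continuous integrals vanish and we obtain $\limsup_{t\to t_0^+}|\bu(t)|^2\leq|\bu(t_0)|^2$; combined with $|\bu(t_0)|^2\leq\liminf_{t\to t_0^+}|\bu(t)|^2$, which follows from the weak continuity of $\bu$, this forces $|\bu(t)|\to|\bu(t_0)|$. In a Hilbert space, weak convergence together with convergence of norms gives strong convergence, so $\bu(t)\to\bu(t_0)$ strongly as $t\to t_0^+$. Hence $\bu\in\Pi_{t_0}^{-1}K\cap\mU_J$, this set is closed in $\mU_J^\sharp(R_J)$, and the inverse-limit reduction completes the proof.
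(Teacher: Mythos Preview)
Your proof is correct, and the analytical core---using the strong compactness of $K$ to force $|\bu_{n_k}(t_0)|^2\to|\bu(t_0)|^2$, then passing the energy inequality with $t'=t_0$ to the limit to recover strong right-continuity of $\bu$ at $t_0$---is exactly what the paper does. The structural packaging differs: the paper first notes that $\KUI$ is metrizable (via the same inclusion into $\bigcap_n\Pi_{J_n}^{-1}\mU_{J_n}(R_n)$) and then establishes sequential compactness directly on all of $I$ by the classical diagonalization procedure, producing a limit in $\mU_I^\sharp$ and then upgrading it to $\mU_I$. You instead work on each compact subinterval, showing $\Pi_{t_0}^{-1}K\cap\mU_J$ is closed in the already-known compact set $\mU_J^\sharp(R_J)$, and then appeal to the inverse-limit description of the compact-open topology to glue. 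Your route trades the diagonalization for a clean topological argument and leans more explicitly on the compactness of $\mU_J^\sharp(R_J)$ as a black box; the paper's route is slightly more self-contained. Both are equally valid and the substantive PDE step is identical. (Minor typo: $\langle\f(s),\bu_n(s)\rangle_{V',Y}$ should be $\langle\f(s),\bu_n(s)\rangle_{V',V}$.)
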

\begin{proof}
Let $\bu\in\KUI$ and let $R\geq 0$ be sufficiently large so that $K\subset B_H(R)$. Consider an increasing sequence $\{J_n\}_n$ of compact subintervals of $I$ with left end point at $t_0$ and such that $I = \bigcup_n J_n$. Since $\Pi_{t_0}\bu\in K\subset B_H(R)$ and $\bu\in\mU_I$, it follows, from the estimate \eqref{ener-est} with $t'=t_0$, that there exists a sequence $\{R_n\}_n$ of positive real numbers such that $\Pi_{J_n}\bu\in\mU_{J_n}(R_n)$, for every $n$. Thus,
\be\label{nseq1}
\Pi_{t_0}^{-1}K\cap\mU_I\subset\bigcap_n\Pi_{J_n}^{-1}\mU_{J_n}(R_n).
\ee
Since each $\mU_{J_n}(R_n)$ is a metrizable space, \eqref{nseq1} implies that $\KUI$ is also metrizable. Therefore, it suffices to show that $\KUI$ is sequentially compact.

Let $\{\bu_k\}_k$ be a sequence in $\KUI$. As in the classical proof of existence of weak solutions (Theorem \ref{nsexistence}), using the a~priori estimates \eqref{ener-est}-\eqref{nsest3} on each compact interval $J_n$ and applying a diagonalization method, we obtain a subsequence $\{\bu_{k'}\}_{k'}$ and a function $\bu$ such that
\be \label{nseconvsubseq}
\bu_{k'} \rightarrow \bu \quad \mbox{in } \Cloc(I,H_\rw),
\ee
as $k'\rightarrow \infty$. Moreover, this limit function $\bu$ is a weak solution on the interior of $I$, i.e. $\bu \in \mU_I^{\sharp}$ (the condition of strong continuity at $t_0$, item \eqref{nsedefweaksolv} of Definition \ref{NSweak}, is not guaranteed at this point). From \eqref{nseconvsubseq} we obtain in particular that
\be\label{nseconvinittime}
\bu_{k'}(t_0) \rightarrow \bu(t_0) \quad \text{in } H_\rw.
\ee
On the other hand, since $K$ is a compact set in $H$, there exists a further subsequence, which we still denote by $\{\bu_{k'}\}_{k'}$, and an element $\bu_0 \in K$ such that
\be\label{nseq3}
\bu_{k'}(t_0) \rightarrow \bu_0 \quad \text{in }H.
\ee
From \eqref{nseconvinittime} and \eqref{nseq3} it follows that $\bu(t_0)=\bu_0$, which implies that $\bu\in \Pi_{t_0}^{-1}K$. Moreover, we obtain that $\{\bu_{k'}(t_0)\}_{k'}$ also converges to $\bu(t_0)$ in the strong topology of $H$. 

This allow us to prove that $\bu$ verifies in addition the last condition of Definition \ref{NSweak}. Indeed, since each $\bu_{k'}$ belongs to $\mU_I$, they satisfy in particular the energy inequality \eqref{energy-ineq} at $t'=t_0$. Considering the $\liminf$ as $k' \rightarrow \infty$ in this inequality we obtain, by using the strong convergence of $\{\bu_{k'}(t_0)\}_{k'}$ to $\bu(t_0)$ in $H$, that
\[
\frac{1}{2}
|\bu(t)|^2+\nu\int_{t_0}^{t}\|\bu(s)\|^2\;\rd s \leq \frac{1}{2}
|\bu(t_0)|^2+\int_{t_0}^{t}\langle\f(s),\bu(s)\rangle_{V',V}\;\rd s.
\]
Then, by taking the $\limsup$ as $t \rightarrow t_0^+$ above, we obtain
\be\label{nselimsupinittime}
\limsup_{t \rightarrow t_0^+} |\bu(t)|^2 \leq |\bu(t_0)|^2.
\ee
Since $\bu \in \Cloc(I,H_\rw)$, we also have that
\be\label{nseliminfinittime}
|\bu(t_0)|^2 \leq \liminf_{t \rightarrow t_0^+} |\bu(t)|^2.
\ee
Now \eqref{nselimsupinittime} and \eqref{nseliminfinittime} imply that $\bu(t)$ converges in norm to $\bu(t_0)$ as $t \rightarrow t_0^+$. Since $\bu(t)$ also converges weakly to $\bu(t_0)$ as $t \rightarrow t_0^+$, we deduce that
\[
\lim _{t \rightarrow t_0^+} \bu(t) = \bu(t_0) \quad \mbox{in } H.
\]
This means that $\bu\in\KUI$, which completes the proof of compactness of this set.
\end{proof}

Now we are able to prove the existence of a solution for the corresponding Initial Value Problem \ref{ivp_tss} associated with the Navier-Stokes equations.

\begin{thm}\label{thm-existence-trajss-NSE}
Let $I\subset\mathds{R}$ be an interval closed and bounded on the left with left end point $t_0$ and let $\mU_I$ be the set of Leray-Hopf weak solutions of the Navier-Stokes equations on $I$. Then, given a Borel probability measure $\mu_0$ on $H$, there exists a $\mU_I$-trajectory statistical solution $\rho$ on $\Cloc(I,H_\rw)$ satisfying the initial condition $\Pi_{t_0}\rho = \mu_0.$
\end{thm}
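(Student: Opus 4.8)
The plan is to verify the three hypotheses \eqref{eqH1b}, \eqref{eqH2bii}, and \eqref{eqH2biii} of Theorem \ref{existencestatsol} for the specific choice $X = H_\rw$ and $\mU = \mU_I$, and then to invoke that theorem directly. Before doing so, I would first reconcile the hypothesis on the initial datum: the measure $\mu_0$ is given as a Borel probability measure on $H$ with its strong topology, whereas Theorem \ref{existencestatsol} requires a tight Borel probability measure on $X = H_\rw$. Since $H$ is a separable Banach space, the strong and weak Borel $\sigma$-algebras coincide (Section \ref{subsecinjectborel}), so $\mu_0$ is indeed a Borel measure on $H_\rw$; and since $H$ is Polish, $\mu_0$ is inner regular with respect to the strongly compact subsets of $H$, which are \emph{a fortiori} weakly compact, so $\mu_0$ is tight when viewed as a measure on $H_\rw$.

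For \eqref{eqH1b}, I would simply appeal to Theorem \ref{nsexistence}: given any $\bu_0 \in H$ there is a Leray-Hopf weak solution $\bu \in \mU_I$ with $\Pi_{t_0}\bu = \bu_0$, which gives $\Pi_{t_0}\mU_I = H = X$. For \eqref{eqH2bii} and \eqref{eqH2biii}, the natural choice is to let $\fK'(H_\rw)$ be the family of strongly compact subsets of $H$. Hypothesis \eqref{eqH2bii} then follows exactly as in the discussion of $\mu_0$ above: through the coincidence of the two Borel $\sigma$-algebras and the Polishness of $H$, every tight Borel probability measure on $H_\rw$ is inner regular with respect to the strongly compact sets. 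Hypothesis \eqref{eqH2biii} is precisely the content of Proposition \ref{prop-KUI-compact}, which asserts that $\Pi_{t_0}^{-1}K \cap \mU_I$ is compact in $\Cloc(I,H_\rw)$ whenever $K$ is strongly compact in $H$. With all three hypotheses in hand, Theorem \ref{existencestatsol} yields a $\mU_I$-trajectory statistical solution $\rho$ on $\Cloc(I,H_\rw)$ with $\Pi_{t_0}\rho = \mu_0$, as claimed.

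The main subtlety lies not in any single computation here, since each hypothesis reduces to a result already established, but rather in the careful matching of the family $\fK'(H_\rw)$ of strongly compact sets to both conditions simultaneously. Indeed, hypothesis \eqref{eqH2biii} is known to hold only for strongly compact $K$; the strong compactness is exactly what forces the limit solutions to retain strong continuity at $t_0$, as exploited in Proposition \ref{prop-KUI-compact} through the energy inequality. One is therefore not free to take the full family of weakly compact sets. The fact that this smaller family of strongly compact sets nonetheless still suffices for the inner-regularity condition \eqref{eqH2bii} is precisely what the separable-Banach, and hence Polish, structure of $H$ provides, and this is the point I would emphasize as the conceptual heart of the argument.
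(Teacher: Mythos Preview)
Your proposal is correct and follows essentially the same approach as the paper: you take $X=H_\rw$, choose $\fK'(H_\rw)$ to be the family of strongly compact subsets of $H$, and verify \eqref{eqH1b} via Theorem \ref{nsexistence}, \eqref{eqH2bii} via the coincidence of strong and weak Borel $\sigma$-algebras together with the Polishness of $H$, and \eqref{eqH2biii} via Proposition \ref{prop-KUI-compact}, then invoke Theorem \ref{existencestatsol}. Your additional remarks on why strongly compact sets are the right choice for $\fK'(H_\rw)$ are accurate and capture the point the paper also makes in the discussion preceding the proof of Theorem \ref{existencestatsol}.
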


\begin{proof}
Our intention is to apply Theorem \ref{existencestatsol} to the set $\mU_I$, with $X=H_\rw$ and $\fK'(H_\rw)$ as the family of (strongly) compact sets in $H$. First of all, since $H$ is a separable Banach space, the Borel sets in $H$ and $H_\rw$ coincide (see Section \ref{subsecinjectborel}). This implies that any Borel probability measure $\mu_0$ on $H$ is also a Borel probability measure on $H_\rw$, and vice-versa, so that we can refer indistinguishably to probability measures on either $H$ or $H_\rw$. Moreover, since $H$ is a Polish space, it follows that any Borel probability measure on $H$ is tight in the sense of being inner regular with respect to the family of compact subsets of $H$ (\cite[Theorem 12.7]{AB}). Thus, $\fK'(H_\rw)$ satisfies the hypothesis \eqref{eqH2bii} of Theorem \ref{existencestatsol}. From Theorem \ref{nsexistence}, it follows that the set $\mU_I$ satisfies hypothesis \eqref{eqH1b} of Theorem \ref{existencestatsol}. From Proposition \ref{prop-KUI-compact}, we also obtain that $\mU_I$ satisfies hypothesis \eqref{eqH2biii} of Theorem \ref{existencestatsol}. Thus, $\mU_I$ verifies all the hypotheses of Theorem \ref{existencestatsol}. Therefore, we apply Theorem \ref{existencestatsol} to deduce that there exists a $\mU_I$-trajectory statistical solution $\rho$ with $\Pi_{t_0}\rho=\mu_0$.
\end{proof}

Finally, using Theorem \ref{thm-existence-trajss-NSE} and the strengthened energy inequality from Proposition \ref{prop-strenghtened-energy-ineq}, we obtain a solution for the corresponding Initial Value Problem \ref{ivp_ss} associated with the Navier-Stokes equations. More specifically, we prove the existence of a projected statistical solution of the Navier-Stokes equations, in the sense of Definition \ref{defprojectstatsol}, associated with a $\mU_I$-trajectory statistical solution and satisfying a given initial data.

\begin{thm}\label{thm-existence-ss-NSE}
Let $I\subset\mathds{R}$ be an interval closed and bounded on the left with left end point $t_0$ and let $\mU_I$ be the set of Leray-Hopf weak solutions of the Navier-Stokes equations on $I$. Consider a Borel probability measure $\mu_0$ on $H$ satisfying
\be\label{eq-finite-energy}
\int_H |\bu|^2\;\rd \mu_0(\bu) < \infty.
\ee
Then there exists a projected statistical solution $\{\rho_t\}_{t\in I}$ of the Navier-Stokes equations \eqref{nse1}-\eqref{nse2}, associated with a $\mU_I$-trajectory statistical solution, such that
\renewcommand{\theenumi}{\roman{enumi}}
\begin{enumerate}
  \item \label{meaninitialconditionnse} The initial condition $\rho_{t_0}=\mu_0$ holds;
  \item \label{continuitystatinfonse} The function
\be\label{intphinse}
t\mapsto \int_H \varphi(\bu)\;\rd\rho_t(\bu)
\ee
is continuous on $I$, for every bounded and weakly-continuous real-valued function $\varphi$ on $H$, and is measurable on $I$, for every bounded and continuous real-valued function $\varphi$ on $H$.
  \item \label{meaneqnsecondition} For any cylindrical test function $\Phi$ in $V'$, it follows that
\begin{multline}
  \label{meaneqnse}
\int_H\Phi(\bu)\;\rd\rho_t(\bu)=\int_H\Phi(\bu)\;\rd\rho_{t'}(\bu)\\
+\int_{t'} ^t\int_H\langle \f(s) - \nu A\bu - B(\bu,\bu),\Phi'(\bu)\rangle_{V',V}\;\rd\rho_s(\bu)\rd s,
\end{multline}
for all $t,t'\in I$.
  \item\label{existssiv} The mean strengthened energy inequality
  \be\label{meanstrengthenedenergyineq}
  \frac{\rd}{\rd t}\int_H (\psi(|\bu|^2)) \;\rd\rho_t(\bu) \leq 2\int_H \psi'(|\bu|^2)[\langle \f(t),\bu\rangle_{V',V}-\nu\|\bu\|^2]\;\rd\rho_t(\bu)
  \ee
  is satisfied in the distribution sense on $I$, for every nonnegative, nondecreasing and continuously-differentiable real-valued function $\psi$ with bounded derivative.
  \item \label{meancontorigin} At the initial time, the limit
  \be\label{eqmeancontorigin} \lim_{t\rightarrow t_0^+} \int_H \psi(|\bu|^2) \;\rd\rho_t(\bu) = \int_H \psi(|\bu|^2) \;\rd\mu_0(\bu)
  \ee
  holds for every function $\psi$ as in \eqref{existssiv}.
\end{enumerate}
\end{thm}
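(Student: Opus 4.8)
The plan is to assemble the projected statistical solution from the existence result for trajectory statistical solutions and the abstract projection machinery, and then to extract the two Navier--Stokes-specific items (iv) and (v) from the strengthened energy inequality and from the strong continuity of individual solutions at the initial time. Concretely, I would work in the abstract framework with $X=H_\rw$, $Z=Y=V$, and $F(t,\bu)=\f(t)-\nu A\bu-B(\bu,\bu)$ regarded as a map $I\times V\rightarrow V'$, and with $\mU=\mU_I$. Theorem \ref{thm-existence-trajss-NSE} already furnishes a $\mU_I$-trajectory statistical solution $\rho$ with $\Pi_{t_0}\rho=\mu_0$, so hypotheses \eqref{eqH1b}--\eqref{eqH2biii} hold; fix a Borel carrier $\mV\subset\mU_I$ of $\rho$. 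The goal is then to invoke Theorem \ref{thm-existence-projectedss} in the norm form of Remark \ref{rmkconditionwithnormyprime}, which immediately yields item (i), the continuity half of item (ii) (since $\mC_\rb(H_\rw)$ is exactly the bounded weakly continuous functions on $H$, this is condition \eqref{ssphasespace1}), and the mean equation (iii) as \eqref{meaneqnse}. It remains to verify the extra hypotheses \eqref{Hfbzinfbx}, \eqref{HuteqinmU}, and \eqref{HFgamma}.

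For \eqref{Hfbzinfbx}, since $V$ is a separable Banach (hence Lusin) space continuously injected into $H$ endowed with its weak topology, Section \ref{subsecinjectborel} gives $\fB_V\subset\fB_{H_\rw}$. For \eqref{HuteqinmU}, a Leray--Hopf solution satisfies $\bu\in L^2_\rloc(I,V)$, so $\bu(t)\in V$ for a.e.\ $t$ and $\bu\in\mZ$; while item \eqref{NSweakeq} of Definition \ref{NSweak} together with Remark \ref{bochnerint} shows $\bu\in\mY_1$ with weak-star scalarwise derivative $F(t,\bu(t))$, so that $\mU_I\subset\mX_1$ and \eqref{evoleqweak} holds, the measurability of $F$ following from the continuity of $A:V\rightarrow V'$ and $B:V\times V\rightarrow V'$ together with the local $L^2$-measurability of $\f$. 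For \eqref{HFgamma} I would establish the stronger bound \eqref{Fgammanorm}: from \eqref{estB} one has $\|F(s,\bu)\|_{V'}\le\|\f(s)\|_{V'}+\nu\|\bu\|+c|\bu|^{1/2}\|\bu\|^{3/2}$, and integrating in time via H\"older's inequality and the a~priori estimates \eqref{ener-est}--\eqref{nsest2} produces $\int_{t_0}^t\|F(s,\bu(s))\|_{V'}\,\rd s\le\gamma(t,\bu(t_0))$ with $\gamma$ growing at most quadratically in $|\bu(t_0)|$. Then the finite-energy hypothesis \eqref{eq-finite-energy} gives \eqref{gammaL1norm}, and Theorem \ref{thm-existence-projectedss} applies.

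The measurability half of (ii) is a short separate argument: a bounded strongly continuous $\varphi$ on $H$ is strongly Borel, hence weakly Borel by separability (Section \ref{subsecinjectborel}), so $(t,\bu)\mapsto\varphi(\bu(t))$ is Borel on $I\times\mX$ through the continuous evaluation map, and Tonelli's theorem makes $t\mapsto\int_\mV\varphi(\bu(t))\,\rd\rho$ measurable. For item (iv), I would apply Proposition \ref{prop-energy-ineq} with $\alpha(t,\bu)=\psi(|\bu|^2)$ and $\beta(t,\bu)=2\psi'(|\bu|^2)[\nu\|\bu\|^2-\langle\f(t),\bu\rangle_{V',V}]$: Proposition \ref{prop-strenghtened-energy-ineq} is precisely the pointwise distributional inequality $\tfrac{\rd}{\rd t}\alpha+\beta\le0$, and the integrability conditions (i)--(ii) of Proposition \ref{prop-energy-ineq} follow from $|\psi'|\le M$, the bound $\psi(r)\le\psi(0)+Mr$, the estimates \eqref{ener-est}--\eqref{nsest2}, and \eqref{eq-finite-energy}; rewriting in terms of $\rho_t=\Pi_t\rho$ gives \eqref{meanstrengthenedenergyineq}.

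The step I expect to require the essential structural input is item (v), namely the strong continuity at the initial time in \eqref{eqmeancontorigin}, since $\bu\mapsto\psi(|\bu|^2)$ is \emph{not} weakly continuous and is therefore not covered by the weak-continuity statement in (ii). Here I would exploit that $\rho$ is carried by $\mV\subset\mU_I$ and that every Leray--Hopf solution is strongly continuous at $t_0$ from the right, by item \eqref{nsedefweaksolv} of Definition \ref{NSweak}; hence $\psi(|\bu(t)|^2)\rightarrow\psi(|\bu(t_0)|^2)$ as $t\rightarrow t_0^+$ for every $\bu\in\mV$. The domination $\psi(|\bu(t)|^2)\le\psi(0)+M\bigl(|\bu(t_0)|^2+\nu^{-1}\|\f\|^2_{L^2(t_0,t_0+1;V')}\bigr)$, valid for $t$ near $t_0$ by \eqref{ener-est}, is $\rho$-integrable exactly because of \eqref{eq-finite-energy}, so the dominated convergence theorem gives $\int_\mV\psi(|\bu(t)|^2)\,\rd\rho\rightarrow\int_\mV\psi(|\bu(t_0)|^2)\,\rd\rho=\int_H\psi(|\bu_0|^2)\,\rd\mu_0$, which is \eqref{eqmeancontorigin}. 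The remaining difficulties are bookkeeping rather than conceptual: one must check that the quadratic-growth bounds entering \eqref{HFgamma} and the domination in (v) are genuinely $\mu_0$-integrable under \eqref{eq-finite-energy}, which is where the finite-energy assumption is used throughout.
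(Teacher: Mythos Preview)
Your proposal is correct and follows essentially the same route as the paper: set $X=H_\rw$, $Z=Y=V$, $F(t,\bu)=\f(t)-\nu A\bu-B(\bu,\bu)$, verify \eqref{Hfbzinfbx}--\eqref{HFgamma}, apply Theorem \ref{thm-existence-projectedss} for (i), (iii), and the weak-continuity half of (ii), then Proposition \ref{prop-energy-ineq} with Proposition \ref{prop-strenghtened-energy-ineq} for (iv), and dominated convergence via strong continuity at $t_0$ and \eqref{ener-est} for (v). For \eqref{HFgamma} the paper simply cites the a~priori estimate \eqref{nsest3}, which packages the same computation you sketch from \eqref{estB} and \eqref{ener-est}--\eqref{nsest2}.

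The one place your argument genuinely diverges is the measurability half of (ii). The paper composes $\varphi$ with the Galerkin projectors $P_m$ (so that $\varphi\circ P_m$ is bounded and weakly continuous, hence $t\mapsto\int_\mV\varphi(P_m\bu(t))\,\rd\rho$ is continuous by the first half of (ii)), and then passes to the pointwise limit in $m$ to get measurability. Your route---observing that a strongly continuous $\varphi$ is Borel on $H_\rw$ by separability, composing with the continuous evaluation $(t,\bu)\mapsto\bu(t)$, and invoking Fubini/Tonelli---is a clean alternative that avoids the spectral structure of $A$ and works in any setting where $\fB_X$ coincides for the two topologies. The Galerkin argument, on the other hand, yields the slightly stronger conclusion that the map in \eqref{intphinse} is a pointwise limit of \emph{continuous} functions (hence of Baire class one), not merely Lebesgue measurable.
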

\begin{proof}
We have seen in the proof of Theorem  \ref{thm-existence-trajss-NSE} that the set of Leray-Hopf weak solutions $\mU_I$ satisfies all the hypotheses \eqref{eqH1b}, \eqref{eqH2bii}, and \eqref{eqH2biii} of Theorem \ref{existencestatsol}. Now let $\F:I\times V\rightarrow V'$ be the function defined by
\[\F(t,\bu)= \f(t)-\nu A\bu-B(\bu,\bu).\]
As previously mentioned, the linear operator $A:V\rightarrow V'$ and the bilinear operator $B: V\times V \rightarrow V'$ are continuous. This implies that the mapping $\bu \mapsto -\nu A\bu - B(\bu,\bu)$ is also continuous from $V$ into $V'$. In particular, the mapping $(t,\bu) \mapsto -\nu A\bu - B(\bu,\bu)$ is $(\fL_I\otimes \fB_V,\fB_{V'})$-measurable. Further, since $\f \in L^2_{\rloc}(I,V')$, we obtain that $\F$ is a $(\fL_I\otimes \fB_V,\fB_{V'})$-measurable function. From the functional equation \eqref{NS} and the fact that any weak solution $\bu\in\mU_I$ satisfies $\bu_t\in L^{4/3}(I;V')$, it follows that \eqref{Ftuislocint} holds. From the condition \eqref{NSweakeq} of the Definition \ref{NSweak} of a Leray-Hopf weak solution, the validity of $\bu_t = \F(t,\bu(t))$ in the sense of Bochner implies that \eqref{evoleqweak} holds in the weak sense (see Remark \ref{bochnerint}). Hence, hypothesis \eqref{HuteqinmU} of Theorem \ref{thm-existence-projectedss} holds.

The a~priori estimate \eqref{nsest3} means that there exists a function $\gamma:I\times H_\rw\rightarrow\mathds{R}$ such that
\[\int_{t_0}^t\|\F(s,\bu(s))\|_{V'}\;\rd s \leq \gamma(t,\bu(t_0)),\quad \forall t\in I, \quad \forall \bu\in \mU_I,\]
which is clearly $(\fL_I \otimes \fB_{H_\rw})$-measurable and, thanks to \eqref{eq-finite-energy}, with
\[t\mapsto \int_{H_\rw} \gamma(t,\bu_0)\;\rd \mu_0(\bu_0) \quad \text{in } L^1_{\rloc}(I). 
\]
Thus, hypothesis \eqref{HFgamma} and condition \eqref{gammaL1} hold (see Remark \ref{rmkconditionwithnormyprime}).

Also, as mentioned before, we know that $V \subset H_\rw \subset V'_{w*}$, with all the injections being continuous. Since $H$ is a separable Banach space, then $\fB_{H_\rw} = \fB_H$. Moreover, since $V$ is a Polish space, then $\fB_V \subset \fB_H = \fB_{H_\rw}$ (see Section \ref{subsecinjectborel}), showing that hypothesis \eqref{Hfbzinfbx} is also verified. 

Then, applying Theorem \ref{thm-existence-projectedss} with $X=H_\rw$, $Z=Y=V$, $\mU=\mU_I$, $\F$ and $\gamma$ as above, we obtain the existence of a projected statistical solution $\{\rho_t\}_{t\in I}$ associated with a $\mU_I$-trajectory statistical solution $\rho$ and such that $\rho_{t_0}=\mu_0$. This means that $\{\rho_t\}_{t\in I}$ satisfies \eqref{meaninitialconditionnse}, \eqref{meaneqnsecondition}, and the first part of \eqref{continuitystatinfonse}, concerning bounded and weakly-continuous functions $\varphi$ on $H$.

Let us prove the second part of property \eqref{continuitystatinfonse}, concerning strongly continuous functions. Consider a bounded and continuous real-valued function $\varphi$ on $H$. Let $P_m$, $m\in\mathds{N}$, be the Galerkin projectors. Then, for every $m\in\mathds{N}$, the function $\varphi\circ P_m$ is bounded and continuous on $H_\rw$. Let $\mV\subset\mU$ be a Borel subset such that $\rho(\mV)=1$. From the first part of \eqref{continuitystatinfonse}, it follows that the function
\[t\mapsto \int_{\mV} \varphi(P_m\bu(t))\;\rd\rho(\bu)
\]
is continuous on $I$, for every $m\in\mathds{N}$. Then, since the function \eqref{intphinse} is the pointwise (in $t$) limit of these functions as $m\rightarrow \infty$, it follows that \eqref{intphinse} is measurable on $I$. This proves the second part of \eqref{continuitystatinfonse}.

For the proof of \eqref{existssiv}, consider the functions $\alpha:I\times H_\rw\rightarrow\mathds{R}$ and $\beta:I\times V\rightarrow \mathds{R}$ defined respectively by
\be\label{eq-alpha}\alpha(t,\bu(t))=\psi(|\bu(t)|^2),\ee
and
\be\label{eq-beta}\beta(t,\bu(t)) = - 2\psi'(|\bu(t)|^2)[\langle f(t),\bu(t) \rangle_{V',V}-\nu\|\bu(t)\|^2],\ee
for every $\bu\in\mU_I$ and $t\in I$. Using the Galerkin projector as above, we see that $\alpha$ and $\beta$ are the pointwise limit of continuous functions, hence they are measurable maps as required in Proposition \ref{prop-energy-ineq}. Using the estimates \eqref{ener-est} and \eqref{nsest2} with $t'=t_0$, which is allowed for functions in $\mU_I$, and using \eqref{eq-finite-energy}, we obtain that $\alpha\in L^\infty(J\times\mV,\lambda\times\rho)$ and $\beta \in L^1(J\times\mV,\lambda\times\rho)$, for every compact subset $J\subset I$, where $\lambda$ denotes the Lebesgue measure on $I$. From Proposition \ref{prop-strenghtened-energy-ineq}, the functions $\alpha$ and $\beta$ satisfy
\[\frac{\rd}{\rd t}\alpha(t,\bu(t)) + \beta(t,\bu(t)) \leq 0,\quad \forall \bu\in\mU_I,\]
in the sense of distributions in $I$. Property \eqref{existssiv} then follows by applying Proposition \ref{prop-energy-ineq} with $X=H_\rw$, $Y=V$, $\mU=\mU_I$ and with the functions $\alpha$ and $\beta$ defined in \eqref{eq-alpha}-\eqref{eq-beta}.

It only remains to prove property \eqref{meancontorigin}. Note that for every function $\psi$ as in \eqref{existssiv}, we may write
\[\psi(|\bu(t)|^2)\leq \psi(0)+\psi'(\xi)|\bu(t)|^2,\]
for some $0 \leq \xi \leq |\bu(t)|^2$. Then, using the boundedness of $\psi'$ and the a~priori estimate \eqref{ener-est} with $t'=t_0$, we find that
\[  \psi(|\bu(t)|^2)\leq C_0 + C_1|\bu(t_0)|^2,
\]
for suitable constants $C_0, C_1>0$. Hence, from \eqref{eq-finite-energy}, it follows that $\psi(|\bu(t)|^2)$ is bounded by a $\rho$-integrable function in $\mV$ which does not depend on $t$. Furthermore, since every $\bu\in \mU_I$ is strongly continuous at $t_0$ and $\psi$ is continuous, we have that
\[ \psi(|\bu(t)|^2) \rightarrow \psi(|\bu(t_0)|^2),
\]
$\rho$-almost everywhere, as $t\rightarrow t_0^+$. Therefore, \eqref{eqmeancontorigin} follows from the Lebesgue Dominated Convergence Theorem.
\end{proof}

\subsection{Reaction-Diffusion Equation}
\label{subsecreacdiffeq}

In this section, we consider the following reaction-diffusion-type equation

\be\label{reacdiffeq1}
\frac{\partial u}{\partial t}(\bx,t) = a\Delta u(\bx,t) - f(t,u(\bx,t)) + g(\bx,t), \quad \bx \in \Omega \subset \mathds{R}^n, \quad t\in I,
\ee
subject to the boundary condition
\be\label{reacdiffeq2}
u(\bx,t)|_{\bx\in\partial\Omega}=0, \quad \forall t\in I,
\ee
where $u$ is the unknown variable, $a$ is a positive constant, $f$ is the reaction function and $g$ is the external force. Moreover, $I \subset \mathds R$ is an arbitrary interval and $\Omega \subset \mathds R^n $ is a bounded and open subset which is assumed to be smooth.

We follow the same framework and notations from \cite[Section XV.3]{ChepVishik}, but in order to simplify the presentation we consider only a scalar equation instead of a system of equations.

Consider the spaces $H=L^2(\Omega)$ and $V=H_0^1(\Omega)$ with respective norms $|\cdot|_H$ and $\|\cdot\|_V$, given by
\[|v|_H^2=\int_\Omega|v(\bx)|^2\;\rd \bx\,,\,\,\forall v\in H,\]
and
\[\|v\|_V^2=\int_\Omega|\nabla v(\bx)|^2\;\rd \bx\,,\,\,\forall v\in V.\]
Also, consider $V'=H^{-1}(\Omega)$, the dual of $H_0^1(\Omega)$, with duality product $\langle \cdot, \cdot \rangle_{V',V}$.  Then, identifying $H$ with its dual space $H'$, we have
\[
V \subset H \equiv H'\subset V',
\]
with continuous inclusions and, in particular, $H_\rw \hookrightarrow V'_{w*}$ with continuous injection. We also consider the space $H_0^r(\Omega)$, for $r>0$, and its dual $H^{-r}(\Omega)$, with $H_0^r(\Omega) \subset H \subset H^{-r}(\Omega)$, for every $r>0$. For $r \geq n/2 - n/p$ and $p\geq 2$, we have $H_0^r(\Omega)\subset L^p(\Omega)$. We denote the duality product between $L^p(\Omega)$ and $L^q(\Omega)$, $1\leq p,q \leq \infty$, $1/p+1/q=1$, simply by $(\cdot,\cdot)$, which includes the inner product of $H$.

We assume that $g\in L^2_{\rloc}(I,V')$ and that $f$ is a function in $\mC(\mathds{R}\times\mathds{R},\mathds{R})$ satisfying the following estimates, for every $v\in\mathds{R}$ and $s\in\mathds{R}$:
\be\label{reacdiffcondf1}
\eta|v|^p-C_1 \leq f(s,v)v,
\ee
\be\label{reacdiffcondf2}
|f(s,v)|^{\frac{p}{p-1}} \leq C_2(|v|^p+1),
\ee
where $\eta>0$, $p\geq 2$ and $C_1$, $C_2\in\mathds{R}$ are constants. In \cite{ChepVishik}, the function $g$ is also assumed to be translation bounded in the space $L^2_{\rloc}(I,V')$, but we do not make this assumption  since we do not need uniform estimates for arbitrarily large times.

As in \cite{ChepVishik}, it follows by using condition \eqref{reacdiffcondf2} that if $r\geq\max\{1,n(1/2-1/p)\}$ and $u\in L^p_{\rloc}(I,L^p(\Omega))\cap L^2_{\rloc}(I,V)$, then $\partial_t u\in L^q_{\rloc}(I,H^{-r}(\Omega))$, for $1/p + 1/q = 1$. This implies that the evolution equation \eqref{reacdiffeq1} can be considered in the distribution sense on $I$, with values in $H^{-r}(\Omega)$. 

We then have the following definition of a weak solution for problem \eqref{reacdiffeq1}-\eqref{reacdiffeq2}.

\begin{defs}\label{def-wsol-reacdiff}
A weak solution of \eqref{reacdiffeq1}-\eqref{reacdiffeq2} is a function $u=u(\bx,t)$ on $\Omega\times I$ such that $u\in L^p_{\rloc}(I,L^p(\Omega))\cap L^2_{\rloc}(I,V)$ and $u$ satisfies \eqref{reacdiffeq1} in the distribution sense on $I$, with values in $H^{-r}(\Omega)$.
\end{defs}

Given $R \geq 0$, let $B_H(R)$ be the closed ball centered at the origin and of radius $R$ in $H$. Consider the following sets of weak solutions:
\begin{align}
\mU_I & =\{u\in\Cloc(I,H)\,|\,u\text{ is a weak solution of \eqref{reacdiffeq1}-\eqref{reacdiffeq2} on }I\}, \\
\mU_I(R) & =\{u\in\Cloc(I,B_H(R))\,|\,u\text{ is a weak solution of \eqref{reacdiffeq1}-\eqref{reacdiffeq2} on }I\}.
\end{align}

The proof of existence of individual weak solutions for the corresponding initial value problem of \eqref{reacdiffeq1}-\eqref{reacdiffeq2} can be found in \cite{ChepVishik}. We state it below for completeness.

\begin{thm}\label{existence-reacdiff}
Consider an interval $I\subset\mathds{R}$ bounded and closed on the left with left end point $t_0$. Let $g\in L^2_{\rloc}(I,V')$ and let $f\in\mC(\mathds{R}\times\mathds{R},\mathds{R})$ be a function satisfying conditions \eqref{reacdiffcondf1} and \eqref{reacdiffcondf2}. Then, given $u_0\in H$, there exists a weak solution $u$ of problem \eqref{reacdiffeq1}-\eqref{reacdiffeq2} such that $u\in L^p_{\rloc}(I,L^p(\Omega))\cap L^2_{\rloc}(I,V)\cap L^{\infty}_{\rloc}(I,H)$ and $u(t_0)=u_0$.
\end{thm}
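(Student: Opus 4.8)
The plan is to construct the solution by the classical Faedo--Galerkin method, following \cite{ChepVishik}. Let $\{w_j\}_{j\in\mathds{N}}$ be the orthonormal basis of $H=L^2(\Omega)$ consisting of the eigenfunctions of $-\Delta$ with homogeneous Dirichlet boundary conditions, which are also orthogonal in $V=H_0^1(\Omega)$, and let $P_m$ denote the orthogonal projection of $H$ onto the span of $w_1,\dots,w_m$. One seeks an approximate solution $u_m(t)=\sum_{j=1}^m c_j^m(t)w_j$ solving, for $j=1,\dots,m$, the system
\[
\frac{\rd}{\rd t}(u_m(t),w_j)=-a\int_\Omega \nabla u_m(t)\cdot\nabla w_j\,\rd\bx-(f(t,u_m(t)),w_j)+\langle g(t),w_j\rangle_{V',V},
\]
with $u_m(t_0)=P_mu_0$. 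Since $f$ is continuous, this is a Carath\'eodory system of ordinary differential equations for the coefficients $c_j^m$, so a local solution exists, and the a priori estimates below ensure that it extends to all of $I$.

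First I would derive the energy estimate by testing against $u_m(t)$ itself, which yields
\[
\frac12\frac{\rd}{\rd t}|u_m(t)|_H^2+a\|u_m(t)\|_V^2+(f(t,u_m(t)),u_m(t))=\langle g(t),u_m(t)\rangle_{V',V}.
\]
Bounding the reaction term from below through condition \eqref{reacdiffcondf1} by $(f(t,u_m),u_m)\geq\eta|u_m|_{L^p}^p-C_1|\Omega|$, absorbing the forcing term with Young's inequality in the form $\langle g,u_m\rangle_{V',V}\leq \tfrac{a}{2}\|u_m\|_V^2+\tfrac{1}{2a}\|g\|_{V'}^2$, and integrating in time, I obtain bounds, uniform in $m$, for $\{u_m\}$ in $L^\infty_{\rloc}(I,H)\cap L^2_{\rloc}(I,V)\cap L^p_{\rloc}(I,L^p(\Omega))$. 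Condition \eqref{reacdiffcondf2} then produces a uniform bound for $\{f(t,u_m)\}$ in $L^q_{\rloc}(I,L^q(\Omega))$, with $1/p+1/q=1$. Writing the equation as $\partial_t u_m=P_m(a\Delta u_m-f(t,u_m)+g)$ and using the embedding $H_0^r(\Omega)\subset L^p(\Omega)$ for $r\geq\max\{1,n(1/2-1/p)\}$, so that $L^q(\Omega)\subset H^{-r}(\Omega)$ and $V'\subset H^{-r}(\Omega)$, I get a uniform bound for $\{\partial_t u_m\}$ in $L^q_{\rloc}(I,H^{-r}(\Omega))$.

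The main step is the passage to the limit. Extracting a subsequence, $u_m\rightharpoonup u$ weakly in $L^2_{\rloc}(I,V)$ and in $L^p_{\rloc}(I,L^p(\Omega))$, weakly-$*$ in $L^\infty_{\rloc}(I,H)$, while $f(t,u_m)\rightharpoonup \chi$ weakly in $L^q_{\rloc}(I,L^q(\Omega))$ for some $\chi$. Since $V\hookrightarrow H$ compactly and $H\hookrightarrow H^{-r}(\Omega)$ continuously, the Aubin--Lions--Simon lemma applied on each compact subinterval gives strong convergence $u_m\to u$ in $L^2_{\rloc}(I,H)$ and hence, along a further subsequence, $u_m\to u$ almost everywhere on $\Omega\times I$. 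The crux, which I expect to be the main obstacle, is identifying $\chi=f(t,u)$: the continuity of $f$ gives $f(t,u_m)\to f(t,u)$ almost everywhere, and combining this pointwise convergence with the uniform $L^q$ bound (via the standard weak-convergence argument for nonlinearities, see \cite[Ch.~1, Lemma~1.3]{Lions}) forces the weak limit to equal $f(t,u)$. With this identification I pass to the limit in the Galerkin equations tested against $\psi(t)w_j$, for $\psi\in\mC_\rc^\infty(\mathring{I})$ and each fixed $j$, and conclude by density of $\cup_m\mathrm{span}\{w_1,\dots,w_m\}$ that $u$ satisfies \eqref{reacdiffeq1} in the distribution sense with values in $H^{-r}(\Omega)$; the regularity $u\in L^p_{\rloc}(I,L^p(\Omega))\cap L^2_{\rloc}(I,V)\cap L^\infty_{\rloc}(I,H)$ follows from the weak and weak-$*$ lower semicontinuity of the norms. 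Finally, for the initial condition, I note that $u\in L^2_{\rloc}(I,V)$ with $\partial_t u\in L^q_{\rloc}(I,H^{-r}(\Omega))$ implies that $u$ agrees almost everywhere with a function that is weakly continuous from $I$ into $H$, so $u(t_0)$ is well defined; testing the Galerkin equation against $\psi(t)w_j$ with $\psi\in\mC^\infty(I)$ satisfying $\psi(t_0)=1$ and supported to the right of $t_0$, integrating by parts in time, and passing to the limit using $P_mu_0\to u_0$ in $H$, identifies the trace and yields $u(t_0)=u_0$.
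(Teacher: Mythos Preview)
Your proposal is correct and follows the standard Faedo--Galerkin scheme. Note, however, that the paper does not actually give a proof of this theorem: it merely cites \cite{ChepVishik} and states the result for completeness. Your sketch is precisely the classical argument one would find in that reference, so there is nothing to compare against in the paper itself.
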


The following proposition presents some additional properties satisfied by every weak solution of \eqref{reacdiffeq1}-\eqref{reacdiffeq2} in the sense of Definition \ref{def-wsol-reacdiff}. The proof is given in \cite[Proposition XV.3.1]{ChepVishik}.

\begin{prop}
Let $u \in L^p_{\rloc}(I,L^p(\Omega))\cap L^2_{\rloc}(I,V)$ be a weak solution of \eqref{reacdiffeq1}-\eqref{reacdiffeq2}. Then
\renewcommand{\theenumi}{\roman{enumi}}
\begin{enumerate}
\item $u \in \Cloc(I,H)$;
\item the function $|u(s)|_H^2$ is absolutely continuous on every compact subinterval $J \subset I$ and satisfies the following energy equality
\be\label{eqenergyequalreacdiff}
\frac{1}{2}\frac{\rd}{\rd t}|u(t)|_H^2 + a\|u(t)\|_V^2 + (f(t,u(t)),u(t)) = \langle g(t), u(t) \rangle_{V',V},
\ee
for almost every $t \in I$.
\end{enumerate}
\end{prop}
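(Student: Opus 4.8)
The plan is to reduce the proposition to a Lions--Magenes-type regularity lemma adapted to the mixed integrability setting $L^2_{\rloc}(I,V)\cap L^p_{\rloc}(I,L^p(\Omega))$, and then to read off the energy equality by substituting the equation into the resulting pairing identity.

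First I would pin down the regularity of the distributional time derivative. By Definition~\ref{def-wsol-reacdiff}, $\partial_t u = a\Delta u - f(t,u) + g$ holds in the distribution sense with values in $H^{-r}(\Omega)$. Since $u \in L^2_{\rloc}(I,V)$ and the Laplacian is bounded from $V=H_0^1(\Omega)$ into $V'=H^{-1}(\Omega)$, we have $a\Delta u \in L^2_{\rloc}(I,V')$, and $g\in L^2_{\rloc}(I,V')$ by hypothesis. For the reaction term, the growth bound \eqref{reacdiffcondf2} gives $f(\cdot,u(\cdot)) \in L^q_{\rloc}(I,L^q(\Omega))$ with $q=p/(p-1)$. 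Hence $\partial_t u$ decomposes, more finely than in $H^{-r}(\Omega)$, as an element of $L^2_{\rloc}(I,V') + L^q_{\rloc}(I,L^q(\Omega))$. Writing $\mB=L^p(\Omega)$, so that $\mB'=L^q(\Omega)$, and recalling $u\in L^2_{\rloc}(I,V)\cap L^p_{\rloc}(I,\mB)$, the pairing $\langle \partial_t u(t),u(t)\rangle$ is well defined for almost every $t$ through the $V'$--$V$ and $\mB'$--$\mB$ dualities.

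Second, I would invoke the abstract lemma: with $V$ and $\mB$ both continuously and densely embedded in $H=L^2(\Omega)$, and $H$ identified with its dual, any $u\in L^2(J,V)\cap L^p(J,\mB)$ whose distributional derivative lies in $L^2(J,V')+L^q(J,\mB')$ on a compact subinterval $J\subset I$ coincides almost everywhere with a function in $\mC(J,H)$, the map $t\mapsto |u(t)|_H^2$ is absolutely continuous on $J$, and
\[ \tfrac{1}{2}\tfrac{\rd}{\rd t}|u(t)|_H^2 = \langle \partial_t u(t),u(t)\rangle \]
for almost every $t$. Part (i) and the absolute continuity in part (ii) are exactly the first two conclusions of this lemma. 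The energy equality \eqref{eqenergyequalreacdiff} then follows by substituting $\partial_t u = a\Delta u - f(t,u)+g$ into this pairing identity: since $\langle a\Delta u(t),u(t)\rangle_{V',V} = -a\|u(t)\|_V^2$, the $\mB'$--$\mB$ pairing of $f(t,u(t))$ with $u(t)$ equals $(f(t,u(t)),u(t))$, and the pairing of $g(t)$ with $u(t)$ is $\langle g(t),u(t)\rangle_{V',V}$, one obtains the stated identity after rearranging.

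The heart of the matter, and the main obstacle, is the proof of the abstract lemma itself, since the naive chain rule $\tfrac{\rd}{\rd t}|u|_H^2 = 2\langle \partial_t u,u\rangle$ is not justified for $u$ of this low regularity. The standard device is mollification in time: setting $u_\ve = \eta_\ve * u$ for a symmetric mollifier $\eta_\ve$, the smooth approximants obey the chain rule exactly, which yields the identity for $|u_\ve(t)|_H^2$, and one passes to the limit $\ve\to 0$ on each compact subinterval. The two delicate points are (a) showing $\langle \partial_t u_\ve, u_\ve\rangle \to \langle \partial_t u, u\rangle$ in $L^1_{\rloc}(I)$, which requires the convergence of the mollifications simultaneously in the paired norms $L^2(V)$/$L^2(V')$ and $L^p(\mB)$/$L^q(\mB')$, and (b) treating the reaction term purely through the $L^q$--$L^p$ duality, with no Hilbert structure available; here one keeps $f(\cdot,u)$ fixed as the given element of $L^q_{\rloc}(I,L^q(\Omega))$ and only uses $u_\ve\to u$ in $L^p_{\rloc}(I,L^p(\Omega))$, so that no continuity of $f$ in its second argument is needed. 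Finally, the strong continuity into $H$ in part (i) follows by combining the continuity of $t\mapsto|u(t)|_H^2$ with the weak continuity of $u$ into $H$ furnished by the same approximation, using that in a Hilbert space weak continuity together with continuity of the norm implies strong continuity.
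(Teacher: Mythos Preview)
Your proposal is correct and follows the standard Lions--Magenes-type argument (decomposing $\partial_t u$ into its $L^2(V')$ and $L^q(L^q)$ parts, invoking the abstract continuity/chain-rule lemma via time-mollification, and then substituting the equation). The paper itself gives no proof of this proposition; it simply refers to \cite[Proposition~XV.3.1]{ChepVishik}, where essentially the argument you outline is carried out, so your approach matches the intended one.
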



Note that, in \eqref{eqenergyequalreacdiff}, we abuse notation by denoting as $f(t,u(t))$ the mapping $x \in \Omega \mapsto f(t,u(x,t))$, which, as a consequence of \eqref{reacdiffcondf2}, belongs to $L^q(\Omega)$ for almost every $t \in I$ and for all $u \in \mU_I$.

Now we prove that the set of weak solutions $\mU_I$ satisfies the hypotheses of Theorem \ref{existencestatsol}. We first observe that Theorem \ref{existence-reacdiff} implies that $\Pi_{t_0}\mU_I = H$, so that $\mU_I$ satisfies hypothesis \eqref{eqH1b} of Theorem \ref{existencestatsol}, with $X = H$. For the remaining hypotheses of Theorem \ref{existencestatsol}, we actually prove the stronger property that \eqref{eqH2biii} holds for every compact subset of $X$. This is given in the following proposition. Hence, in this case, we can take $\fK'(X)$ to be the family of all compact subsets of $X$, so that, in particular, hypothesis \eqref{eqH2bii} is trivially true. 

\begin{prop}\label{reacdiffKUIcompact}
Let $I\subset \mathds{R}$ be an interval closed and bounded on the left with left end point $t_0$ and let $K$ be a compact subset of $H$. Then $\KUI$ is compact in $\Cloc(I,H)$.
\end{prop}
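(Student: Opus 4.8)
The plan is to follow the strategy of Proposition~\ref{prop-KUI-compact}, but working throughout in the \emph{strong} topology of $H$, which is available here because weak solutions of \eqref{reacdiffeq1}--\eqref{reacdiffeq2} are strongly continuous into $H$. Since $H=L^2(\Omega)$ is a separable, hence metrizable, Hilbert space and $I\subset\mathds R$ is $\sigma$-compact and locally compact, the compact-open topology on $\mX=\Cloc(I,H)$ is metrizable; it therefore suffices to prove that $\KUI$ is sequentially compact. First I would fix an increasing sequence $\{J_n\}_n$ of compact subintervals of $I$ with left endpoint $t_0$ and $I=\bigcup_n J_n$, and take an arbitrary sequence $\{u_k\}_k$ in $\KUI$. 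Integrating the energy equality \eqref{eqenergyequalreacdiff} from $t_0$, and using \eqref{reacdiffcondf1}, Young's inequality on $\langle g(s),u(s)\rangle_{V',V}$, and the fact that $u_k(t_0)\in K$ with $K$ bounded, yields bounds for $\{u_k\}_k$ in $L^\infty(J_n,H)\cap L^2(J_n,V)\cap L^p(J_n,L^p(\Omega))$ that are uniform in $k$; the growth condition \eqref{reacdiffcondf2} then bounds $\{\partial_t u_k\}_k$ in $L^q(J_n,H^{-r}(\Omega))$.

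With these estimates in hand, the Aubin--Lions--Simon compactness lemma (using $V\hookrightarrow\hookrightarrow H\hookrightarrow H^{-r}(\Omega)$) together with a diagonal extraction over $n$ produces a subsequence $\{u_{k'}\}_{k'}$ and a limit $u$ such that $u_{k'}\to u$ strongly in $L^2(J_n,H)$, weakly in $L^2(J_n,V)$ and $L^p(J_n,L^p(\Omega))$, weak-star in $L^\infty(J_n,H)$, and $\partial_t u_{k'}\to\partial_t u$ weakly in $L^q(J_n,H^{-r}(\Omega))$, for every $n$; in particular, an equicontinuity argument for $t\mapsto(u_{k'}(t),v)$, $v\in H_0^r(\Omega)$, gives $u_{k'}\to u$ in $\Cloc(I,H_\rw)$. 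Passing to the limit in the distributional formulation of \eqref{reacdiffeq1} --- the nonlinear term being handled by extracting a.e.\ convergence from the strong $L^2(J_n,H)$ convergence, the continuity of $f$, and the bound \eqref{reacdiffcondf2}, which identify the weak limit of $f(\cdot,u_{k'})$ as $f(\cdot,u)$ --- shows that $u$ is a weak solution of \eqref{reacdiffeq1}--\eqref{reacdiffeq2} on $I$, hence $u\in\Cloc(I,H)$ by the preceding proposition. Since $u_{k'}(t_0)\rightharpoonup u(t_0)$ in $H$ and $K$ is strongly compact, a further subsequence gives $u_{k'}(t_0)\to u(t_0)$ strongly with $u(t_0)\in K$, so $u\in\Pi_{t_0}^{-1}K$.

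The main obstacle is to upgrade the convergence $u_{k'}\to u$ from $\Cloc(I,H_\rw)$ to the strong topology $\Cloc(I,H)$, and for this I would exploit the energy \emph{equality}. Writing $E_k(t)=\tfrac12|u_k(t)|_H^2$, the equality \eqref{eqenergyequalreacdiff} gives
\[
E_{k'}(t)=E_{k'}(t_0)+\int_{t_0}^t\langle g(s),u_{k'}(s)\rangle_{V',V}\,\rd s-a\int_{t_0}^t\|u_{k'}(s)\|_V^2\,\rd s-\int_{t_0}^t(f(s,u_{k'}(s)),u_{k'}(s))\,\rd s.
\]
The first two terms on the right converge to their counterparts for $u$ (using $u_{k'}(t_0)\to u(t_0)$ in $H$ and the weak convergence in $L^2(J_n,V)$), while weak lower semicontinuity of $\|\cdot\|_V^2$ and Fatou's lemma applied to the nonnegative functions $f(\cdot,u_{k'})u_{k'}+C_1$ (from \eqref{reacdiffcondf1}) give $\liminf$ bounds which, combined with the energy equality satisfied by $u$, yield $\limsup_{k'}E_{k'}(t)\le E(t)$. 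Since $u_{k'}(t)\rightharpoonup u(t)$ in $H$ forces $\liminf_{k'}E_{k'}(t)\ge E(t)$, we obtain $|u_{k'}(t)|_H\to|u(t)|_H$ for each $t$, so weak-plus-norm convergence gives strong pointwise convergence. Uniformity on each $J_n$ follows because the integrand in the displayed identity is bounded in $L^1(J_n)$ uniformly in $k'$, making the functions $E_{k'}$ uniformly equicontinuous and hence uniformly convergent to $E$ on $J_n$; expanding $|u_{k'}(t)-u(t)|_H^2=2E_{k'}(t)-2(u_{k'}(t),u(t))+|u(t)|_H^2$ and using that $(u_{k'}(\cdot),u(\cdot))\to|u(\cdot)|_H^2$ uniformly on $J_n$ (the set $\{u(t):t\in J_n\}$ being strongly compact) then gives $u_{k'}\to u$ uniformly on $J_n$, i.e.\ in $\Cloc(I,H)$. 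This shows $u\in\KUI$ and completes the proof of sequential compactness.
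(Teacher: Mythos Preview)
Your overall strategy matches the paper's: establish metrizability of $\Cloc(I,H)$, reduce to sequential compactness, obtain uniform a~priori bounds from the energy equality \eqref{eqenergyequalreacdiff} and \eqref{reacdiffcondf1}, extract a convergent subsequence via Aubin--Lions and diagonalization, pass to the limit in the equation, and then upgrade to strong convergence in $\Cloc(I,H)$. The paper's proof is terse at this last step, simply invoking \cite[Theorem~XV.3.1]{ChepVishik} for the convergence $u_j\to u$ in $\Cloc(I,H)$; you attempt this step explicitly via the energy equality, which is the right idea, and your pointwise argument (combining $\limsup_{k'} E_{k'}(t)\le E(t)$ from weak lower semicontinuity and Fatou with $\liminf_{k'} E_{k'}(t)\ge E(t)$ from weak convergence) is correct.

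There is, however, a genuine gap in your uniformity step. You claim that uniform $L^1(J_n)$ boundedness of the integrand in the displayed energy identity makes the functions $E_{k'}$ uniformly equicontinuous. This is false in general: a sequence of absolutely continuous functions whose derivatives are merely bounded in $L^1$ need not be equicontinuous (think of smoothed step functions with fixed total variation). What you would need is uniform integrability of the derivatives, and that does not follow from the available bounds --- in particular, the terms $a\|u_{k'}(\cdot)\|_V^2$ and $(f(\cdot,u_{k'}),u_{k'})$ are only known to be bounded in $L^1(J_n)$, not uniformly integrable. One clean repair is to use the \emph{one-sided} bound: from \eqref{eqenergyequalreacdiff}, \eqref{reacdiffcondf1}, and Young's inequality one gets
\[
\frac{\rd}{\rd t}E_{k'}(t)\le \frac{1}{2a}\|g(t)\|_{V'}^2+C_1|\Omega|=:h(t),
\]
with $h\in L^1_{\rloc}(I)$ independent of $k'$. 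Then $t\mapsto E_{k'}(t)-\int_{t_0}^t h$ is nonincreasing on $J_n$ and converges pointwise to the continuous function $t\mapsto E(t)-\int_{t_0}^t h$; a standard Dini-type argument for monotone functions converging pointwise to a continuous limit gives uniform convergence on $J_n$. After that, your expansion of $|u_{k'}(t)-u(t)|_H^2$ together with your (correct) argument for the uniform convergence of $(u_{k'}(\cdot),u(\cdot))$ --- which indeed uses only the compactness of $\{u(t):t\in J_n\}$ in $H$ and the convergence $u_{k'}\to u$ in $\Cloc(I,H_\rw)$ --- finishes the proof.
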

\begin{proof}
Since $\mX=\Cloc(I,H)$ is a metrizable space, it suffices to show that $\KUI$ is sequentially compact. Consider then a sequence $\{u_j\}_j$ in $\KUI$. Since $K$ is compact there exists $u_0\in K$ such that, by taking a subsequence if necessary, $u_j(t_0)\rightarrow u_0$ in $H$. This implies in particular that the sequence $\{u_j(t_0)\}$ is bounded in $H$.

Using condition \eqref{reacdiffcondf1} on the energy equality \eqref{eqenergyequalreacdiff} for each $u_j$ and integrating from $t_0$ to $t$, we obtain that
\begin{multline}
  \label{eqreacdiffest}
|u_j(t)|_H^2 - |u_j(t_0)|_H^2 + a\int_{t_0}^t \|u_j(s)\|_V^2 \;\rd s + 2\eta\int_{t_0}^t |u_j(s)|^p_{L^p} \;\rd s \\
\leq \frac{1}{a} \int_{t_0}^t \|g(s)\|_{V'}^2 \;\rd s + 2 C_1|t-t_0||\Omega|,
\end{multline}
where $|\cdot|_{L^p}$ denotes the norm in $L^p(\Omega)$, $|\Omega|$ is the Lebesgue measure of $\Omega$, and $C_1$ is as in \eqref{reacdiffcondf1}. Consider a sequence $\{J_n\}_n$ of compact subintervals of $I$ such that $I = \bigcup_n J_n$. Then, from the estimate \eqref{eqreacdiffest} and the boundedness of the sequence $\{u_j(t_0)\}$ in $H$, it follows that, for each $n$, $\{u_j\}_j$ is a bounded sequence in $L^2(J_n,V) \cap L^p(J_n,L^p(\Omega)) \cap L^\infty (J_n,H)$. Using the same arguments as in \cite[Theorem XV.3.1]{ChepVishik} and a diagonalization process, we obtain a weak solution $u$ of problem \eqref{reacdiffeq1}-\eqref{reacdiffeq2} on $I$ such that, modulo a subsequence, $u_j\rightarrow u$ in $\Cloc(I,H)$. In particular, it follows that $u_j(t)\rightarrow u(t)$ in $H$, for every $t\in I$. Thus, $u(t_0) = u_0 \in K$ and we conclude that $u\in \KUI$, as required.
\end{proof}

The existence of a trajectory statistical solution with respect to a given initial data now follows by a simple application of Theorem \ref{existencestatsol} for $X=H$ and $\mU_I$ as the set of weak solutions of \eqref{reacdiffeq1}-\eqref{reacdiffeq2} over a given interval $I \subset \mathds R$ closed and bounded on the left. Recall that since $H$ is a Polish space then every Borel probability measure on $H$ is tight. We then have the following result.

\begin{thm}
Let $I\subset\mathds{R}$ be an interval closed and bounded on the left with left end point $t_0$ and let $\mU_I$ be the set of weak solutions of problem \eqref{reacdiffeq1}-\eqref{reacdiffeq2} on $I$. If $\mu_0$ is a Borel probability measure on $H$ then there exists a $\mU_I$-trajectory statistical solution $\rho$ on $\Cloc(I,H)$ such that $\Pi_{t_0}\rho=\mu_0$.
\end{thm}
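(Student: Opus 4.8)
The plan is to verify that the set $\mU_I$ of weak solutions satisfies the three hypotheses \eqref{eqH1b}, \eqref{eqH2bii}, and \eqref{eqH2biii} of the abstract existence result, Theorem \ref{existencestatsol}, applied with the phase space $X = H = L^2(\Omega)$ endowed with its strong topology, the path space $\mX = \Cloc(I,H)$, and $\mU = \mU_I$; the desired conclusion then follows at once. The point is that all the analytic work has already been isolated in the individual-solution theory, so the argument reduces to checking the abstract hypotheses one by one.

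First I would fix the family $\fK'(H)$ to be the collection of \emph{all} (strongly) compact subsets of $H$. Since $H = L^2(\Omega)$ is a separable Banach space, hence a Polish space, every finite Borel measure on $H$ is regular and, in particular, tight and inner regular with respect to its compact subsets (see Section \ref{subsecelementsofmeastheory}). Two consequences follow. On the one hand, the given Borel probability measure $\mu_0$ is automatically tight, so it meets the hypothesis on the initial data required by Theorem \ref{existencestatsol}, even though tightness is not assumed in the statement. On the other hand, with this choice of $\fK'(H)$, hypothesis \eqref{eqH2bii} holds trivially, since inner regularity with respect to the family of all compact sets is precisely the definition of tightness.

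Next I would check hypothesis \eqref{eqH1b}, namely $\Pi_{t_0}\mU_I = H$. This is exactly the statement of global existence of a weak solution for every initial datum, which is the content of Theorem \ref{existence-reacdiff}: given any $u_0 \in H$ there is a weak solution $u \in \mU_I$ with $u(t_0) = u_0$. Finally, hypothesis \eqref{eqH2biii} demands that $\Pi_{t_0}^{-1}K \cap \mU_I$ be compact in $\mX$ for each $K \in \fK'(H)$, and this is precisely Proposition \ref{reacdiffKUIcompact}.

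With all three hypotheses verified, Theorem \ref{existencestatsol} immediately yields a $\mU_I$-trajectory statistical solution $\rho$ on $I$ satisfying $\Pi_{t_0}\rho = \mu_0$, which is the assertion. I expect the only genuine work to reside in hypothesis \eqref{eqH2biii}, discharged by Proposition \ref{reacdiffKUIcompact}. It is worth stressing that, unlike the Navier-Stokes case of Section \ref{subsecnse}, here the weak solutions are strongly continuous as maps from $I$ into $H$ (by the regularity part of the preceding proposition), so $X$ may be taken with its strong topology and the compactness in \eqref{eqH2biii} holds for \emph{every} compact $K$. This lets us take $\fK'(H)$ to be the full family of compact subsets and renders \eqref{eqH2bii} automatic, so that no passage to a smaller family of strongly compact sets is needed.
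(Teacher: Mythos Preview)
Your proposal is correct and follows essentially the same approach as the paper: verify hypotheses \eqref{eqH1b}, \eqref{eqH2bii}, \eqref{eqH2biii} of Theorem \ref{existencestatsol} with $X=H$, $\fK'(H)$ the family of all compact subsets (so \eqref{eqH2bii} is automatic), invoking Theorem \ref{existence-reacdiff} for \eqref{eqH1b} and Proposition \ref{reacdiffKUIcompact} for \eqref{eqH2biii}, and noting that $H$ Polish makes $\mu_0$ tight. The paper presents this reasoning in the paragraph preceding the theorem rather than as a formal proof, but the content is identical.
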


Next we obtain the existence of statistical solutions with respect to a given initial measure.

\begin{thm}
Let $I\subset\mathds{R}$ be an interval closed and bounded on the left with left end point $t_0$ and let $\mu_0$ be a Borel probability measure on $H$ satisfying
\be
\int_H |u|_H^2\;\rd \mu_0(u) < \infty.
\ee
Then there exists a projected statistical solution $\{\rho_t\}_{t\in I}$ of \eqref{reacdiffeq1}-\eqref{reacdiffeq2}, associated with a $\mU_I$-trajectory statistical solution, such that
\renewcommand{\theenumi}{\roman{enumi}}
\begin{enumerate}
  \item \label{rdemeaninitialcondition} The initial condition $\rho_{t_0}=\mu_0$ holds;
  \item \label{rdecontinuitystatinfo} The function
\be
t\mapsto \int_H \varphi(u)\;\rd\rho_t(u)
\ee
is continuous on $I$ for every bounded and continuous real-valued function $\varphi$ on $H$;
  \item For any cylindrical test function $\Phi$ in $H^{-r}(\Omega)$, with $r\geq\max\{1,n(1/2-1/p)\}$, it follows that
\begin{multline}
\int_H\Phi(u)\;\rd\rho_t(u)=\int_H\Phi(u)\;\rd\rho_{t'}(u)\\
+\int_{t'} ^t\int_H\langle a\Delta u -f(s,u) + g(s),\Phi'(u)\rangle_{H^{-r}(\Omega),H_0^r(\Omega)} \;\rd\rho_s(u)\rd s,
\end{multline}
for all $t,t'\in I$;
  \item For every nonnegative, nondecreasing and continuously-differentiable real-valued function $\psi$ with bounded derivative, the function
  \[
    t \mapsto \int_H \psi(|u|_H^2) \;\rd \rho_t(u)
  \]
  is absolutely continuous on $I$, and the following mean strengthened energy equality holds in the distribution sense on $I$,
  \[
  \frac{\rd}{\rd t}\int_H \psi(|u|_H^2) \;\rd\rho_t(u) = 2\int_H \psi'(|u|_H^2)[\langle g(t), u \rangle_{V',V} - a \|u\|_V^2 - (f(t,u),u)]\;\rd\rho_t(u).
  \]
  \end{enumerate}
\end{thm}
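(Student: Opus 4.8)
The plan is to obtain the result as a direct application of Theorem \ref{thm-existence-projectedss}, complemented by Proposition \ref{prop-energy-eq} for the mean energy equality in part (iv). I would identify the abstract spaces as $X=H$ (with its strong topology, since $\mU_I\subset\Cloc(I,H)$), $Y=H_0^r(\Omega)$ so that $Y'=H^{-r}(\Omega)$, and $Z=V\cap L^p(\Omega)$. These satisfy $Z\subset X\subset Y'_{w*}$ with continuous injections, using $V\cap L^p(\Omega)\subset H$ and $H\subset H^{-r}(\Omega)$ for $r\geq 1$. The right-hand side is $F(t,u)=a\Delta u-f(t,u)+g(t)$, which maps $I\times Z$ into $Y'$, since $\Delta$ sends $V$ into $H^{-1}\subset H^{-r}$, the Nemytskii map $u\mapsto f(t,u)$ sends $L^p(\Omega)$ into $L^q(\Omega)\subset H^{-r}(\Omega)$ by \eqref{reacdiffcondf2}, and $g\in L^2_{\rloc}(I,V')\subset L^2_{\rloc}(I,H^{-r}(\Omega))$.

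First I would verify the trajectory-level hypotheses \eqref{eqH1b}, \eqref{eqH2bii}, \eqref{eqH2biii}, exactly as for the trajectory statistical solution already obtained above: \eqref{eqH1b} is Theorem \ref{existence-reacdiff}; taking $\fK'(H)$ to be the family of all compact subsets of $H$ makes \eqref{eqH2bii} trivial (every Borel measure on the Polish space $H$ is tight), and \eqref{eqH2biii} is Proposition \ref{reacdiffKUIcompact}. For \eqref{Hfbzinfbx}, the space $Z=V\cap L^p(\Omega)$ is a separable Banach space, hence Polish, continuously injected into the metrizable space $H$, so by the results of Section \ref{subsecinjectborel} one has $\fB_Z\subset\fB_X$. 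For \eqref{HuteqinmU}, every $u\in\mU_I$ satisfies $u(t)\in Z$ for a.e.\ $t$ (from $u\in L^2_{\rloc}(I,V)\cap L^p_{\rloc}(I,L^p(\Omega))$), and since $\partial_t u\in L^q_{\rloc}(I,H^{-r}(\Omega))$ one has $u\in W^{1,1}_{\rloc}(I;Y')\subset\mY_1$ by Remark \ref{bochnerint}; hence $u\in\mX_1$. The measurability of $F$ follows from the continuity of $\Delta:V\to H^{-r}$ and of the Nemytskii operator $L^p(\Omega)\to L^q(\Omega)$ together with the measurability of $g$; property \eqref{Ftuislocint} holds since $F(t,u(t))=\partial_t u(t)\in L^q_{\rloc}\subset L^1_{\rloc}$; and $u_t=F(t,u)$ in the weak sense \eqref{evoleqweak} is the very definition of weak solution.

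The main technical step is hypothesis \eqref{HFgamma}, for which I would use the norm formulation \eqref{Fgammanorm} from Remark \ref{rmkconditionwithnormyprime} (legitimate here since $Y=H_0^r(\Omega)$ is Banach). Integrating the energy equality \eqref{eqenergyequalreacdiff} and using \eqref{reacdiffcondf1} yields, as in \eqref{eqreacdiffest}, bounds on $\int_{t_0}^t\|u\|_V^2\,\rd s$ and $\int_{t_0}^t|u|_{L^p}^p\,\rd s$ controlled by $|u(t_0)|_H^2$ plus constants depending on $g$. Then, splitting $\|F(s,u(s))\|_{H^{-r}}\leq a\|\Delta u\|_{H^{-r}}+\|f(s,u)\|_{H^{-r}}+\|g\|_{H^{-r}}$ and estimating $\|\Delta u\|_{H^{-r}}\leq C\|u\|_V$, $\|f(s,u)\|_{H^{-r}}\leq C\|f(s,u)\|_{L^q}\leq C(|u|_{L^p}^{p-1}+1)$ via \eqref{reacdiffcondf2}, and applying H\"older in time, one obtains $\int_{t_0}^t\|F(s,u(s))\|_{H^{-r}}\,\rd s\leq\gamma(t,u(t_0))$ with $\gamma(t,u_0)\leq C(t)(1+|u_0|_H^2)$, measurable in $(t,u_0)$. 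The finite-energy hypothesis then gives $\int_H\gamma(t,u_0)\,\rd\mu_0(u_0)<\infty$, i.e.\ \eqref{gammaL1norm}. With all hypotheses in place, Theorem \ref{thm-existence-projectedss} produces a projected statistical solution $\{\rho_t\}_{t\in I}$ with $\rho_{t_0}=\mu_0$, which is part (i) and yields the continuity in part (ii) and the Liouville equation in part (iii).

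Finally, for part (iv) I would invoke Proposition \ref{prop-energy-eq} with $\alpha(t,u(t))=\psi(|u(t)|_H^2)$ and $\beta(t,u(t))=-2\psi'(|u(t)|_H^2)[\langle g(t),u(t)\rangle_{V',V}-a\|u(t)\|_V^2-(f(t,u(t)),u(t))]$; by the chain rule applied to \eqref{eqenergyequalreacdiff}, the pointwise equality $\frac{\rd}{\rd t}\alpha(t,u(t))+\beta(t,u(t))=0$ holds in the distribution sense for every $u\in\mU_I$. The measurability of $\alpha,\beta$ follows by writing them as pointwise limits of functions built from finite-dimensional spectral projections, and their local integrability over $J\times\mV$ follows from $\psi(|u|_H^2)\leq C(1+|u|_H^2)$, the bound $|(f(t,u),u)|\leq\|f\|_{L^q}|u|_{L^p}\leq C(|u|_{L^p}^p+1)$, the Cauchy--Schwarz bound $|\langle g,u\rangle_{V',V}|\leq\|g\|_{V'}\|u\|_V$, and the integrated energy estimate together with the finite-energy hypothesis on $\mu_0$. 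Proposition \ref{prop-energy-eq} then gives the mean energy equality in distribution form, and the local integrability of its right-hand side shows that $t\mapsto\int_H\psi(|u|_H^2)\,\rd\rho_t(u)$ is absolutely continuous. I expect the estimates controlling the reaction term $f$ through conditions \eqref{reacdiffcondf1}--\eqref{reacdiffcondf2}, both in \eqref{HFgamma} and in the integrability of $\beta$, to be the main obstacle, the remainder being a transcription of the Navier--Stokes argument.
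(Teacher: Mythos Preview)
Your proposal is correct and follows essentially the same approach as the paper: applying Theorem \ref{thm-existence-projectedss} with $X=H$, $Z=H_0^1(\Omega)\cap L^p(\Omega)$, $Y=H_0^r(\Omega)$, and $F(t,u)=a\Delta u-f(t,u)+g(t)$, and then Proposition \ref{prop-energy-eq} with the same $\alpha$ and $\beta$ you wrote down. The paper's proof is just a two-line pointer to these choices and to the Navier--Stokes argument of Theorem \ref{thm-existence-ss-NSE}; your write-up is a faithful and more detailed expansion of exactly that route.
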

\begin{proof}
The proof follows by arguments similar to the ones used in Theorem \ref{thm-existence-ss-NSE}. 
We apply Theorem \ref{thm-existence-projectedss} and Proposition \ref{prop-energy-eq} with $X=H$, $\mU=\mU_I$, $Z=H_0^1(\Omega) \cap L^p(\Omega)$, $Y=H_0^r(\Omega)$, and the functions $F: I \times Z \rightarrow Y'$, $\alpha: I \times H \rightarrow \mathds{R}$ and $\beta: I \times V \rightarrow \mathds{R}$ defined as
\[F(t,u) =  a\Delta u(t) -f(t,u(t)) + g(t),\]

\[\alpha(t,u) = \psi(|u|_H^2),\]
and
\[\beta(t,u) = - 2\psi'(|u|_H^2)[\langle g(t), u \rangle_{V',V} - a \|u\|_V^2 - (f(t,u),u)].\]
\end{proof}

\subsection{Nonlinear wave equation}
\label{subsecnonlinearwaveeq}

In this section, we apply the abstract framework to prove the existence of statistical solutions of a nonlinear hyperbolic-type equation which appears within the theory of Relativistic Quantum Mechanics. We follow the framework presented in \cite[Chap. 1, Sec.1]{Lions}.

Let $\Omega\subset\mathds{R}^n$ be a bounded open set with smooth boundary, denoted by $\partial \Omega$, and let $I\subset \mathds{R}$ be an arbitrary interval. Consider the equation

\be\label{hypeq}
\frac{\partial ^2 u}{\partial t^2}-\Delta u+|u|^r u = f,
\ee
where the real-valued function $u=u(\bx,t)$ is the unknown variable, $r$ is a positive constant and $f=f(\bx,t)$ is a given function, with $\bx \in \Omega$ and $t \in I$.

We endow equation \eqref{hypeq} with the following boundary condition:
\be\label{hypboundcond}
u(\bx,t)|_{\bx\in\partial\Omega}=0, \quad \forall t\in I.
\ee

In order to obtain a functional setting for problem \eqref{hypeq}-\eqref{hypboundcond}, we introduce the space
\[
\tilde{V}=H_0^1(\Omega)\cap L^p(\Omega),
\]
where $p=r+2$. The space $\tilde{V}$ turns into a Banach space when endowed with the norm $\|\cdot\|_{\tilde V}$, defined by
\[
\|v\|_{\tilde V} = \|v\|_{H_0^1} + |v|_{L^p}, \quad \forall v\in \tilde{V},
\]
where $\|\cdot\|_{H_0^1}$ and $|\cdot|_{L^p}$ denote the usual norms in the spaces $H_0^1(\Omega)$ and $L^p(\Omega)$, respectively. 

The dual space of $\tilde{V}$ is the space $\tilde{V}'=H^{-1}(\Omega)+L^q(\Omega)$, where $1/p + 1/q = 1$. The duality product between $\tilde{V}$ and $\tilde{V}'$ is denoted by $\langle \cdot, \cdot \rangle_{\tilde{V}',\tilde{V}}$.

Also, we consider the space $L^2(\Omega)$ endowed with its usual norm and inner product, which are denoted respectively by $|\cdot|_{L^2}$ and $(\cdot,\cdot)_{L^2}$. Moreover, we assume that $f$ is a function in $L^2_{\rloc}(I,L^2(\Omega))$.

We rewrite equation \eqref{hypeq} in the following equivalent form:
\be\label{nwesystem}
\left\{\begin{array}{ll} \displaystyle \frac{\partial u}{\partial t} - v = 0, \\ \\
                         \displaystyle \frac{\partial v}{\partial t} -\Delta u+|u|^r u = f.
         \end{array}
  \right.
\ee

We denote the nonlinear term of the second equation in \eqref{nwesystem} by the function $b:\tilde{V}\rightarrow \tilde{V}'$ given by
\[
b(u)=|u|^r u, \quad \forall u \in \tilde{V}.
\]
Then, considering $U=(u,v)$ and the linear operator $A$ defined by
\[
AU = \begin{pmatrix}0 & -I \\ -\Delta & 0 \end{pmatrix}U = \begin{pmatrix}0 & -I \\ -\Delta & 0 \end{pmatrix}\begin{pmatrix}u \\ v\end{pmatrix} = \begin{pmatrix}-v \\ -\Delta u\end{pmatrix},
\]
the system \eqref{nwesystem} becomes
\be\label{nwevectoreq}
\frac{\rd U}{\rd t} + AU + N(U)= G,
\ee
where $N(U)$ and $G$ are the vectors
\[
N(U) = \begin{pmatrix}0 \\ b(u)\end{pmatrix},\quad G = \begin{pmatrix} 0 \\ f\end{pmatrix}.
\]
From \eqref{hypboundcond} we obtain the following boundary condition for \eqref{nwevectoreq}:
\be\label{hypvectorboundcond}
U(\bx,t)|_{\bx\in\partial\Omega}=0, \quad \forall t\in I.
\ee
Now we define $V = \tilde{V} \times L^2(\Omega)$, with the norm
\[
\|U\|_{V} = \sqrt{\|u\|_{\tilde{V}}^2 + |v|_{L^2}^2}, \quad \forall \, U = (u,v) \in V.
\]
When endowed with its corresponding weak topology, the space $V$ is denoted by $V_\rw$.

We characterize the dual of $V$ as the space $V' = L^2 \times \tilde{V}'$ (see Remark \ref{switcheddual}), with the duality product between $h = (f,g)\in V'$ and $U=(u,v)\in V$ as
\[
\langle h , U \rangle_{V',V} = ( f , v )_{L^2} + \langle g , u \rangle_{\tilde{V}',\tilde{V}}.
\]
With this representation, the usual norm for an element $h=(f,g)$ in the dual space $V'$ can also be written as
\[
\|h\|_{V'} = \sqrt{|f|_{L^2}^2 + \|g\|_{\tilde{V}'}^2}.
\]

We now give the definition of a weak solution of problem \eqref{nwevectoreq}-\eqref{hypvectorboundcond}.

\renewcommand{\theenumi}{\roman{enumi}}
\begin{defs}\label{hypweaksol}
Let $I\subset\mathds{R}$ be an interval and let $f\in L^2_{\rloc}(I, L^2(\Omega))$. We say that $U=U(t)=(u(t),v(t))$ is a \emph{weak solution of problem \eqref{nwevectoreq}-\eqref{hypvectorboundcond} on $I$} if the following conditions are satisfied:
\begin{enumerate}
\item\label{hypwsi} $U\in L^\infty_{\rloc}(I,V)$;
\item\label{hypwsii} $U \in \Cloc(I,V_\rw)$;
\item\label{hypwsiii} $U$ satisfies
\be\label{hypfunceq}
\frac{\rd U}{\rd t} + AU + N(U) = G \quad \text{in }V',
\ee
in the sense of distributions on $I$;
\item\label{hypwsiv} For almost every $t'\in I$, $U$ satisfies the following energy inequality
\be\label{hypenergyineq}
E(U(t)) \leq E(U(t'))+\int_{t'}^t \langle \check{G}(s),U(s) \rangle_{V',V}\;\rd s,
\ee
for every $t\in I$ with $t>t'$, where $\check{G} = (f,0)$ and
\be\label{defJ}
E(U) = E(u,v) = \frac{1}{2}\|u\|_{H_0^1}^2 + \frac{1}{p}|u|_{L^p}^p + \frac{1}{2}|v|_{L^2}^2;
\ee
\item\label{hypwsv} If $I$ is closed and bounded on the left, with left end point $t_0$, then $U$ is strongly continuous at $t_0$ from the right, i.e. $U(t)\rightarrow U(t_0)$ in $V$ as $t\rightarrow t_0^+$.
\end{enumerate}
The set of times $t'$ for which \eqref{hypenergyineq} is valid can be characterized as the points of strong continuity from the right of $U$, and they form a set of total measure in $I$.
\end{defs}

\begin{rmk}
  \label{switcheddual}
  A more natural and usual representation for the dual of $V=\tilde V \times L^2(\Omega)$ is simply $\tilde V' \times L^2(\Omega)$, preserving the order of the variables in the product space. However, in order to be able to write the equation for the evolution of the information $\int_{V}\Phi(U)\;\rd\rho_t(U)$ in the general form \eqref{meaneq} (see \eqref{meaneqnwe}), we switched the representation to $V'= L^2(\Omega) \times \tilde V$. For the same reason, we introduced the term $\check{G} = (f,0)$.
\end{rmk}

For any $R>0$, let $B_{V}(R)$ denote the closed ball of radius $R$ in $V$. We define the following sets of weak solutions of problem \eqref{nwevectoreq}-\eqref{hypvectorboundcond}:
\be
\mU_I=\{U\in \Cloc(I,V_\rw): U \mbox{ is a weak solution of \eqref{nwevectoreq}-\eqref{hypvectorboundcond} on } I\},
\ee
\be
\mU_I(R)=\{U \in  \mC(I,B_{V}(R)_\rw): U \mbox{ is a weak solution of \eqref{nwevectoreq}-\eqref{hypvectorboundcond} on } I\}.
\ee

Next we state an existence theorem of individual weak solutions for the initial value problem associated with the system \eqref{nwevectoreq}-\eqref{hypvectorboundcond}. The proof is given in \cite[Theorem 1.1, Chap. 1, Sec. 1]{Lions}. Although the regularity conditions \eqref{hypwsii}, \eqref{hypwsiv} and \eqref{hypwsv} of Definition \ref{hypweaksol} are not explicitly written in the statement of the theorem in this reference, they are obtained along the lines of their proof. 

\begin{thm}\label{hypexistence}
Let $I\subset\mathds{R}$ be an interval closed and bounded on the left with left end point $t_0$ and let $f\in L^2_{\rloc}(I,L^2(\Omega))$. Then, given $U_0\in V$, there exists at least one weak solution $U\in \mU_I$ of \eqref{nwevectoreq}-\eqref{hypvectorboundcond}, in the sense of Definition \ref{hypweaksol}, satisfying $\Pi_{t_0}U=U_0$.
\end{thm}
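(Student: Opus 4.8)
The plan is to construct a solution by the Faedo--Galerkin method, the classical route for nonlinear hyperbolic equations of this type. First I would fix a sequence $\{w_j\}_{j\in\mathds N}\subset\tilde V$ that is a basis of $\tilde V$ and orthonormal in $L^2(\Omega)$ --- for instance the Dirichlet eigenfunctions of $-\Delta$, which belong to $\tilde V$. For each $m\in\mathds N$ I would seek $u_m(t)=\sum_{j=1}^m g_{jm}(t)w_j$ solving the projection of the scalar equation \eqref{hypeq} onto $V_m=\mathrm{span}\{w_1,\dots,w_m\}$, set $v_m=u_m'$, $U_m=(u_m,v_m)$, and take $u_m(t_0),v_m(t_0)$ to be the projections of the components of $U_0=(u_0,v_0)$, so that $U_m(t_0)\to U_0$ in $V$. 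Since $s\mapsto|s|^r s$ is continuous, the resulting finite-dimensional second-order system has locally defined Carath\'eodory solutions.

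The second step is the a~priori energy estimate. Testing the projected equation with $v_m=u_m'$ and using that $\tfrac{\rd}{\rd t}\tfrac1p|u_m|_{L^p}^p=(|u_m|^r u_m,v_m)_{L^2}$ (as $p-2=r$), I obtain, with $E$ as in \eqref{defJ},
\be\label{galerkinenergy}
\frac{\rd}{\rd t}E(U_m(t)) = (f(t),v_m(t))_{L^2} = \langle \check G(t),U_m(t)\rangle_{V',V}.
\ee
Bounding the right-hand side of \eqref{galerkinenergy} by $\tfrac12|f(t)|_{L^2}^2+2E(U_m(t))$ and applying Gronwall's inequality yields a bound for $E(U_m)$ on every compact $J\subset I$ in terms of $E(U_0)$ and $\|f\|_{L^2(J;L^2)}$. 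This continues each local solution to all of $I$ and gives uniform bounds for $u_m$ in $L^\infty_{\rloc}(I,\tilde V)$ and $v_m$ in $L^\infty_{\rloc}(I,L^2(\Omega))$; moreover, since $(r+1)q=p$ with $1/p+1/q=1$, the term $b(u_m)=|u_m|^r u_m$ is bounded in $L^\infty_{\rloc}(I,L^q(\Omega))$. Passing to a subsequence, I extract weak-star limits $u_m\rightharpoonup u$, $v_m\rightharpoonup v=u'$, and $b(u_m)\rightharpoonup\chi$ in the respective spaces.

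The crux --- and the step I expect to be the main obstacle --- is to identify the nonlinear limit, namely $\chi=|u|^r u$. Here I would invoke compactness: since $u_m$ is uniformly Lipschitz in time into $L^2(\Omega)$ (because $u_m'=v_m$ is bounded in $L^\infty_{\rloc}(I,L^2(\Omega))$) and $u_m(t)$ lies in a bounded, hence relatively $L^2$-compact, subset of $H_0^1(\Omega)$ (as $\Omega$ is bounded), the Aubin--Lions--Simon lemma --- equivalently Arzel\`a--Ascoli --- gives, along a further subsequence, $u_m\to u$ strongly in $\mC(J,L^2(\Omega))$ for every compact $J\subset I$, whence $u_m\to u$ almost everywhere on $\Omega\times I$. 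Continuity of $s\mapsto|s|^r s$ yields $b(u_m)\to|u|^r u$ a.e., and combined with the uniform $L^q$-bound, the standard lemma on weak limits of nonlinear terms (\cite[Chap.~1, Lemma~1.3]{Lions}) identifies $\chi=|u|^r u$. I then pass to the limit in the Galerkin formulation to obtain \eqref{hypfunceq} in the distribution sense, giving condition \eqref{hypwsiii}, while condition \eqref{hypwsi} follows from the uniform $L^\infty$-bounds.

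It remains to verify \eqref{hypwsii}, \eqref{hypwsiv}, and \eqref{hypwsv}. Weak continuity $U\in\Cloc(I,V_\rw)$ follows since $U'=G-AU-N(U)$ is locally integrable into $V'$, so $U\in\Cloc(I,V'_{w*})$, which upgrades to weak continuity in $V$ using the uniform bound of $U$ in $V$. The energy inequality \eqref{hypenergyineq} is obtained by integrating \eqref{galerkinenergy} from $t'$ to $t$ and passing to the $\liminf$: weak convergence of $v_m$ handles the forcing term while weak lower semicontinuity of each norm-power in $E$ produces ``$\le$'' in the limit, for a.e.\ $t'$. Finally, strong continuity at $t_0$ (condition \eqref{hypwsv}) is proved as in the Navier--Stokes case (Proposition \ref{prop-KUI-compact}): from $U_m(t_0)\to U_0$ in $V$ one gets $\limsup_{t\to t_0^+}E(U(t))\le E(U(t_0))$, which with the lower semicontinuity bound $\liminf_{t\to t_0^+}E(U(t))\ge E(U(t_0))$ forces $E(U(t))\to E(U(t_0))$. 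Since the limit of the sum of the three weakly lower semicontinuous energy terms equals the sum of their limits, each term converges individually; weak convergence together with norm convergence in the uniformly convex spaces $H_0^1(\Omega)$, $L^p(\Omega)$, and $L^2(\Omega)$ then yields strong convergence in each factor, hence $U(t)\to U(t_0)$ in $V$. The initial condition $\Pi_{t_0}U=U_0$ is built into the construction.
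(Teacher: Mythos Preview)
Your proposal is correct and follows precisely the Faedo--Galerkin route of \cite[Chap.~1, Sec.~1]{Lions}, which is exactly what the paper invokes for this theorem; the paper gives no independent proof but simply cites Lions and remarks that conditions \eqref{hypwsii}, \eqref{hypwsiv}, and \eqref{hypwsv} are obtained along the lines of that argument, which is just what you have sketched. One small caveat: the Dirichlet eigenfunctions are not obviously total in $\tilde V=H_0^1(\Omega)\cap L^p(\Omega)$ when $p>2n/(n-2)$, so in general you should take, as Lions does, an arbitrary sequence $\{w_j\}\subset\tilde V$ that is total in $\tilde V$ and choose the Galerkin initial data so that $U_m(t_0)\to U_0$ in $V$; with that adjustment your argument goes through unchanged.
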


Consider now $I \subset \mathds R$ an interval closed and bounded on the left with left end point $t_0$. In this case, item \eqref{hypwsv} of Definition \ref{hypweaksol} implies that the energy inequality \eqref{hypenergyineq} is valid for $t' = t_0$.

Let $U = (u,v) \in \mU_I$ such that $U(t_0) \in B_V(R)$ , for some $R>0$. From the energy inequality \eqref{hypenergyineq} with $t' = t_0$, it follows that
\be\label{eqestnormU}
E(U(t)) \leq R + \frac{1}{2}\int_{t_0}^t |f(s)|_{L^2}^2 \;\rd s + \frac{1}{2}\int_{t_0}^t |v(s)|_{L^2}^2 \;\rd s,
\ee
for every $t \in I$, which also yields
\be\label{eqestnormv}
\frac{1}{2}|v(t)|_{L^2}^2 \leq R + \frac{1}{2}\int_{t_0}^t |f(s)|_{L^2}^2 \;\rd s + \frac{1}{2}\int_{t_0}^t |v(s)|_{L^2}^2 \;\rd s.
\ee
Then, given a compact subinterval $J \subset I$, by applying Gr\"onwall's inequality in \eqref{eqestnormv} we obtain that $|v(\cdot)|_{L^2}^2$ is uniformly bounded on $J$. From the estimate \eqref{eqestnormU}, it then follows that there exists $\tilde{R} \geq R$ such that $U(t) \in B_V(\tilde{R})$, for every $t \in J$. Thus, the restriction of $U$ to $J$ belongs to $\mU_J(\tilde{R})$.

We shall now prove that the set of weak solutions $\mU_I$ satisfies the hypotheses \eqref{eqH1b}, \eqref{eqH2bii}, and \eqref{eqH2biii} of Theorem \ref{existencestatsol}.

Theorem \ref{hypexistence} shows, in an equivalent form, that $\Pi_{t_0}\mU_I=V_\rw$. Thus, the set $\mU_I$ satisfies hypothesis \eqref{eqH1b} with $X=V_\rw$.

Now define
\[\fK'(V_\rw)=\{K\subset V_\rw\,|\, K\text{ is a (strongly) compact set in }V\}.
\]

Since $V$ is a separable Banach space, we obtain that every Borel probability measure $\mu_0$ on $V_\rw$ is tight with respect to the family $\fK'(V_\rw)$ (see Sections \ref{subsecelementsofmeastheory} and \ref{subsecinjectborel}). Then, considering $X=V_\rw$, it follows that the family $\fK'(V_\rw)$ satisfies hypotheses \eqref{eqH2bii} of Theorem \ref{existencestatsol}. The next proposition shows that $\mU_I$ also satisfies hypothesis \eqref{eqH2biii}.

\begin{prop}\label{hypKUIcompact}
Let $I\subset\mathds{R}$ be an interval closed and bounded on the left with left end point $t_0$ and let $K$ be a set in $\fK'(V_\rw)$. Then $\KUI$ is a compact set in $\mX=\Cloc(I,V_\rw)$.
\end{prop}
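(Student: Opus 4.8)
The plan is to mirror the argument used for the Navier--Stokes equations in Proposition \ref{prop-KUI-compact}, replacing the $H$-energy by the functional $E$ of \eqref{defJ}. First I would reduce the problem to sequential compactness. Fix $U\in\KUI$ and choose $R>0$ with $K\subset B_V(R)$. Writing $I=\bigcup_n J_n$ as an increasing union of compact subintervals with left endpoint $t_0$, the estimates \eqref{eqestnormU}--\eqref{eqestnormv} together with Gr\"onwall's inequality produce radii $R_n$ such that $\Pi_{J_n}U\in\mU_{J_n}(R_n)$, whence $\KUI\subset\bigcap_n\Pi_{J_n}^{-1}\mU_{J_n}(R_n)$. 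Since $V$ is a separable reflexive Banach space, bounded balls of $V_\rw$ are metrizable (its dual $V'=L^2\times\tilde V'$ is separable), so each $\mU_{J_n}(R_n)\subset\mC(J_n,B_V(R_n)_\rw)$ is metrizable; consequently $\KUI$ carries a metrizable topology and it suffices to prove it is sequentially compact.

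Next I would take a sequence $\{U_k\}_k$ in $\KUI$ and extract a convergent subsequence. Exactly as in the proof of Theorem \ref{hypexistence}, the uniform bounds in $L^\infty_{\rloc}(I,V)$ on $U_k=(u_k,v_k)$, together with the bounds on $\partial_t U_k$ coming from \eqref{hypfunceq}, yield weak equicontinuity of $t\mapsto\langle\phi,U_k(t)\rangle_{V',V}$ for each fixed $\phi\in V'$; by a diagonal argument over a countable dense family $\{\phi_j\}\subset V'$ and the metrizability of bounded balls of $V_\rw$, a subsequence $\{U_{k'}\}$ converges in $\Cloc(I,V_\rw)$ to some $U$ which is a weak solution on $\mathring{I}$, i.e. satisfies items \eqref{hypwsi}--\eqref{hypwsiv} of Definition \ref{hypweaksol} on the interior. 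From this convergence, $U_{k'}(t_0)\rightharpoonup U(t_0)$ in $V$; since $K$ is strongly compact and $U_{k'}(t_0)\in K$, a further subsequence gives $U_{k'}(t_0)\to U_0$ strongly in $V$ with $U_0\in K$, and uniqueness of the weak limit forces $U(t_0)=U_0\in K$, so $U\in\Pi_{t_0}^{-1}K$.

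The remaining and hardest step is to verify the strong right-continuity at $t_0$ (item \eqref{hypwsv}), which is precisely what may be lost in the weak limit. Here I would pass to the $\liminf$ in the energy inequality \eqref{hypenergyineq} written at $t'=t_0$ for each $U_{k'}$: the functional $E$ is a sum of convex, strongly continuous terms, hence weakly lower semicontinuous, while the strong convergence $U_{k'}(t_0)\to U(t_0)$ gives $E(U_{k'}(t_0))\to E(U(t_0))$, and $v_{k'}(s)\rightharpoonup v(s)$ in $L^2(\Omega)$ together with dominated convergence handles the forcing integral $\int_{t_0}^t(f(s),v_{k'}(s))_{L^2}\,\rd s$. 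This yields the energy inequality for $U$ at $t_0$; taking $\limsup_{t\to t_0^+}$ then gives $\limsup_{t\to t_0^+}E(U(t))\le E(U(t_0))$, while weak continuity of $U$ at $t_0$ and weak lower semicontinuity give $E(U(t_0))\le\liminf_{t\to t_0^+}E(U(t))$, so $E(U(t))\to E(U(t_0))$ as $t\to t_0^+$.

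Finally I would upgrade this to strong convergence in $V$. Because each of the three terms $\tfrac12\|u(t)\|_{H_0^1}^2$, $\tfrac1p|u(t)|_{L^p}^p$, $\tfrac12|v(t)|_{L^2}^2$ is individually weakly lower semicontinuous and their sum converges to the sum of the limits, a squeezing argument (using the boundedness of $U(t)$ near $t_0$ to pass to subsequential limits) forces each term to converge separately, i.e. $\|u(t)\|_{H_0^1}\to\|u(t_0)\|_{H_0^1}$, $|u(t)|_{L^p}\to|u(t_0)|_{L^p}$ and $|v(t)|_{L^2}\to|v(t_0)|_{L^2}$. In the Hilbert spaces $H_0^1(\Omega)$ and $L^2(\Omega)$, weak convergence plus convergence of norms gives strong convergence; in $L^p(\Omega)$ the same conclusion follows from uniform convexity (the Radon--Riesz property), which is the one ingredient genuinely beyond the Navier--Stokes case. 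Combining these, $U(t)\to U(t_0)$ in $V$ as $t\to t_0^+$, so $U\in\KUI$, completing the proof of sequential compactness and hence of compactness.
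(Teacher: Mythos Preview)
Your proposal is correct and follows the same outline as the paper's proof: reduce to sequential compactness via metrizability on balls of $V_\rw$, extract a subsequence converging in $\Cloc(I,V_\rw)$ to a weak solution on $\mathring I$, use the strong compactness of $K$ to recover $U(t_0)\in K$ and the energy inequality at $t'=t_0$ for the limit, and then deduce strong right-continuity at $t_0$.

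Where you actually go beyond the paper is in the final step. The paper simply writes ``as in the case of the Navier--Stokes equations\ldots\ we obtain that $U$ is strongly continuous at $t_0$,'' but this analogy hides a genuine wrinkle: for Navier--Stokes the energy is $\tfrac12|\bu|^2$, so $|\bu(t)|\to|\bu(t_0)|$ plus weak convergence immediately gives strong convergence in the Hilbert space $H$. Here $E(U)$ mixes an $L^p$ term with $H^1_0$ and $L^2$ terms, and $E$ is not the square of the $V$-norm. Your squeezing argument (each summand is weakly lower semicontinuous, their sum converges, hence each summand converges) together with the Radon--Riesz property of $L^p(\Omega)$ is exactly what is needed to pass from $E(U(t))\to E(U(t_0))$ to $U(t)\to U(t_0)$ in $V$, and the paper leaves this implicit.
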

\begin{proof}
Let $R>0$ be such that $K\subset B_{V}(R)$ and let $\{J_n\}_n$ be a sequence of compact subsets of $I$ such that
\[
I=\bigcup_n J_n.
\]
Then, from the energy inequality \eqref{hypenergyineq} with $t'=t_0$, one obtains that, for every $n\in \mathds{N}$, there exists a positive real number $R_n\geq R$ such that
\[
\Pi_{J_n} U \in \mU_{J_n}(R_n),\quad \forall U \in \KUI,
\]
which implies that
\[
\KUI \subset \bigcap_n \Pi_{J_n}^{-1}\mU_{J_n}(R_n).
\]
Since $\mU_{J_n}(R_n)$ is a subset of $\Cloc(I,B_{V}(R_n)_\rw)$, which is a metrizable space, it follows that $\KUI$ is also metrizable. Thus, it is enough to prove that $\KUI$ is a sequentially compact space.

Consider then a sequence $\{U_k\}_k$ in $\KUI$. Since $U_k(t_0)\in K$ and $K$ is a compact set in $V$, there exists $U_0\in V$ and a subsequence $\{k_j\}_j$ such that
\be\label{hyppropeq6}
U_{k_j}(t_0) \rightarrow U_0 \quad \text{in }V.
\ee

Following classical arguments used for the existence of weak solutions (see \cite[Chap. 1, Sec.1]{Lions}), we obtain a~priori estimates that allow us to pass to the limit on each compact set $J_n$. Then, using a diagonalization process, we obtain a further subsequence (which we still denote by $\{U_{k_j}\}_j$) and a function $U$ defined on the interval $I$ such that $\{U_{k_j}\}_j$ converges to $U$ in $\Cloc(I,V_\rw)$ and $U$ is a weak solution on the interior of $I$. Thanks to \eqref{hyppropeq6} we have at the initial time that $U(t_0) = U_0 \in K$, so that $U\in \Pi_{t_0}^{-1}K$. Then, as in the case of the Navier-Stokes equations (see the proof of Proposition \ref{prop-KUI-compact}), using the energy inequality and the fact that the convergence \eqref{hyppropeq6} at the initial time is in the strong topology, we obtain that $U$ is strongly continuous at the initial time $t_0$, so that $U\in \mU_I$. Therefore, $U\in \KUI$, proving that $\KUI$ is compact.
\end{proof}

Thus, applying Theorem \ref{existencestatsol} with $X = V_\rw$ and $\mU$ as the set of weak solutions $\mU_I$, we obtain the following result on the existence of a trajectory statistical solution for the nonlinear wave equation with respect to a given initial data.

\begin{thm}
Let $I\subset\mathds{R}$ be an interval closed and bounded on the left with left end point $t_0$ and let $\mU_I$ be the set of weak solutions of problem \eqref{nwevectoreq}-\eqref{hypvectorboundcond} on $I$. If $\mu_0$ is a Borel probability measure on $V$ then there exists a $\mU_I$-trajectory statistical solution $\rho$ on $\Cloc(I,V_\rw)$ such that $\Pi_{t_0}\rho=\mu_0$.
\end{thm}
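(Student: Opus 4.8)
The plan is to apply Theorem~\ref{existencestatsol} directly, taking $X=V_\rw$, $\mX=\Cloc(I,V_\rw)$, and $\mU=\mU_I$, with $\fK'(V_\rw)$ chosen as the family of strongly compact subsets of $V$. The whole argument then reduces to verifying that this choice satisfies the three hypotheses \eqref{eqH1b}, \eqref{eqH2bii}, and \eqref{eqH2biii}, since all the analytic content has already been established in Theorem~\ref{hypexistence} and Proposition~\ref{hypKUIcompact}. This parallels exactly the treatment of the Navier--Stokes equations in Theorem~\ref{thm-existence-trajss-NSE}.

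First I would record that hypothesis \eqref{eqH1b}, namely $\Pi_{t_0}\mU_I=V_\rw$, is precisely the content of the existence Theorem~\ref{hypexistence}: for every $U_0\in V$ there is a weak solution $U\in\mU_I$ with $\Pi_{t_0}U=U_0$. Next, for \eqref{eqH2bii} I would invoke the fact that $V$ is a separable Banach space, hence a Polish space, so that every Borel probability measure on $V$ is automatically tight and inner regular with respect to the family $\fK'(V_\rw)$ of strongly compact subsets of $V$ (see Sections~\ref{subsecelementsofmeastheory} and \ref{subsecinjectborel}); here it is important to note that, since for a separable Banach space the strong and weak Borel $\sigma$-algebras coincide, the given $\mu_0$ is at once a Borel probability measure on $V$ and on $V_\rw$, so no distinction need be drawn. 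Finally, hypothesis \eqref{eqH2biii} is exactly Proposition~\ref{hypKUIcompact}, which asserts that $\KUI$ is compact in $\mX$ for every $K\in\fK'(V_\rw)$.

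With the three hypotheses in place, Theorem~\ref{existencestatsol} yields, for the given $\mu_0$, a $\mU_I$-trajectory statistical solution $\rho$ on $\Cloc(I,V_\rw)$ satisfying $\Pi_{t_0}\rho=\mu_0$, which is the desired conclusion. The genuinely substantial step in this chain is the compactness asserted in Proposition~\ref{hypKUIcompact}, and within it the recovery of strong continuity at $t_0$ for the limiting trajectory: the a~priori bounds \eqref{eqestnormU}--\eqref{eqestnormv}, together with Gr\"onwall's inequality, furnish weak-star compactness on each $J_n$ and, by diagonalization, a limit that solves the equation on the interior of $I$, but the delicate point is to combine the strong convergence $U_{k_j}(t_0)\to U_0$ in $V$ with the energy inequality \eqref{hypenergyineq} at $t'=t_0$ to deduce $\lim_{t\to t_0^+}U(t)=U(t_0)$ in $V$, i.e.\ item \eqref{hypwsv} of Definition~\ref{hypweaksol}. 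That obstacle is, however, already resolved in Proposition~\ref{hypKUIcompact}, so at this stage it only needs to be cited rather than reproved.
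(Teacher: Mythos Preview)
Your proposal is correct and follows essentially the same approach as the paper: the paper likewise derives this theorem as an immediate application of Theorem~\ref{existencestatsol} with $X=V_\rw$, $\mU=\mU_I$, and $\fK'(V_\rw)$ the family of strongly compact subsets of $V$, citing Theorem~\ref{hypexistence} for \eqref{eqH1b}, the separable Banach space argument for \eqref{eqH2bii}, and Proposition~\ref{hypKUIcompact} for \eqref{eqH2biii}. Your additional remarks about the role of strong continuity at $t_0$ in Proposition~\ref{hypKUIcompact} are accurate but, as you note, not needed at this stage.
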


In the next result we prove the existence of a statistical solution of problem \eqref{nwevectoreq}-\eqref{hypvectorboundcond} in the sense of Definition \ref{def-statsolphasespace} with respect to a given initial data.

\begin{thm}
Let $I\subset\mathds{R}$ be an interval closed and bounded on the left with left end point $t_0$ and let $\mU_I$ be the set of weak solutions of problem \eqref{nwevectoreq}-\eqref{hypvectorboundcond} on $I$. Consider a Borel probability measure $\mu_0$ on $V$ satisfying
\be\label{eqfiniteenergynwe}
\int_{V} E(U)\;\rd \mu_0(U) < \infty,
\ee
with $E$ as defined in \eqref{defJ}.
Then there exists a projected statistical solution $\{\rho_t\}_{t\in I}$ of \eqref{hypeq}, associated with a $\mU_I$-trajectory statistical solution, such that
\renewcommand{\theenumi}{\roman{enumi}}
\begin{enumerate}
  \item \label{meaninitialconditionnwe} The initial condition $\rho_{t_0}=\mu_0$ holds;
  \item \label{continuitystatinfonwe} The function
\be
t\mapsto \int_{V} \varphi(U)\;\rd\rho_t(U)
\ee
is continuous on $I$, for every bounded and weakly-continuous real-valued function $\varphi$ on $V$, and is measurable on $I$, for every bounded and continuous real-valued function $\varphi$ on $V$;
  \item \label{meaneqnwecondition} For any cylindrical test function $\Phi$ in $V'$, it follows that
\begin{multline}
  \label{meaneqnwe}
\int_{V}\Phi(U)\;\rd\rho_t(U)=\\ \int_{V}\Phi(U)\;\rd\rho_{t'}(U)+\int_{t'} ^t\int_{V}\langle G - AU - N(U) ,\Phi'(U)\rangle_{V',V}\;\rd\rho_s(U)\rd s,
\end{multline}
for all $t,t'\in I$;
  \item\label{existssivnwe} The mean strengthened energy inequality
  \be\label{meanstrengthenedenergyineqnwe}
  \frac{\rd}{\rd t}\int_{V} \psi(E(U)) \;\rd\rho_t(U) \leq \int_{V} \psi'(E(U))\langle \check{G}(t),U(t)\rangle_{V',V} \;\rd\rho_t(U)
  \ee
  is satisfied in the distribution sense on $I$, for every nonnegative, nondecreasing and continuously-differentiable real-valued function $\psi$ with bounded derivative, where $\check{G} = (f,0)$;
  \item \label{meancontoriginnwe} At the initial time, the limit
  \be\label{eqmeancontoriginnwe} \lim_{t\rightarrow t_0^+} \int_{V} \psi(E(U)) \;\rd\rho_t(U) = \int_{V} \psi(E(U)) \;\rd\mu_0(U)
  \ee
  holds for every function $\psi$ as in \eqref{existssivnwe}.
\end{enumerate}
\end{thm}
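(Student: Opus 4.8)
The strategy is to verify that the nonlinear wave equation fits the abstract framework of Theorems \ref{thm-existence-projectedss} and \ref{thmextensionhdoubleprimephasespace} and Proposition \ref{prop-energy-ineq}, exactly as was done for the Navier-Stokes equations in Theorem \ref{thm-existence-ss-NSE}. The proof follows along the very same lines, so the bulk of the work is identifying the abstract data $X,Z,Y,F$ and checking the hypotheses. The natural choices are $X = V_\rw$, $Z = Y = V$, $\mU = \mU_I$, together with the right-hand side $F(t,U) = G - AU - N(U)$, where $A$, $N$, and $G$ are as in \eqref{nwevectoreq}. The hypotheses \eqref{eqH1b}, \eqref{eqH2bii}, and \eqref{eqH2biii} have already been established in Theorem \ref{hypexistence} and Proposition \ref{hypKUIcompact}, so that Theorem \ref{existencestatsol} applies to yield the underlying $\mU_I$-trajectory statistical solution $\rho$ with $\Pi_{t_0}\rho = \mu_0$, immediately giving item \eqref{meaninitialconditionnwe}.

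First I would verify the measurability and integrability conditions \eqref{Hfbzinfbx}, \eqref{HuteqinmU}, and \eqref{HFgamma} of Theorem \ref{thm-existence-projectedss}. Condition \eqref{Hfbzinfbx} follows since $V$ is a separable Banach space, so $\fB_V = \fB_{V_\rw}$ (Section \ref{subsecinjectborel}). For \eqref{HuteqinmU}, I would note that the linear operator $A:V\to V'$ and the Nemytskii-type map $U\mapsto N(U)$ are continuous from $V$ into $V'$ (the latter using the growth condition implicit in $b(u)=|u|^r u$ mapping $\tilde V$ into $\tilde V'$), so that $(t,U)\mapsto F(t,U)$ is $(\fL_I\otimes\fB_V,\fB_{V'})$-measurable; the functional equation \eqref{hypfunceq} holding in the distribution sense on $I$ is precisely \eqref{evoleqweak} with $w=F(t,U(t))$, establishing $U_t = F(t,U)$ in the weak sense. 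For the integrability bound \eqref{HFgamma}, in the form \eqref{Fgammanorm} of Remark \ref{rmkconditionwithnormyprime}, I would use the energy inequality \eqref{hypenergyineq} with $t'=t_0$ together with Gr\"onwall's inequality (as already carried out in \eqref{eqestnormU}--\eqref{eqestnormv}) to bound $\int_{t_0}^t \|F(s,U(s))\|_{V'}\,\rd s$ by a measurable function $\gamma(t,U(t_0))$ of the initial datum; the finite-energy hypothesis \eqref{eqfiniteenergynwe} then yields \eqref{gammaL1norm}. Applying Theorem \ref{thm-existence-projectedss} gives \eqref{meaneqnwecondition} and the first (weakly-continuous) part of \eqref{continuitystatinfonwe}, and the second (strongly-continuous, hence only measurable) part follows by approximating a bounded continuous $\varphi$ on $V$ through Galerkin-type projections, as in the Navier-Stokes proof.

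For the mean strengthened energy inequality \eqref{existssivnwe}, I would invoke Proposition \ref{prop-energy-ineq} with $\alpha(t,U) = \psi(E(U))$ and $\beta(t,U) = -\psi'(E(U))\langle \check G(t),U(t)\rangle_{V',V}$. The individual inequality in the form \eqref{energyineqiii} must be deduced from the energy inequality \eqref{hypenergyineq} by a strengthening analogous to Proposition \ref{prop-strenghtened-energy-ineq}: applying the nondecreasing $C^1$ function $\psi$ with bounded derivative to $E(U(t))$ and using that $\psi'\geq 0$. The integrability conditions \eqref{propenergyineqi}--\eqref{propenergyineqii} follow from \eqref{eqfiniteenergynwe} and the energy estimate \eqref{eqestnormU}. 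Finally, \eqref{meancontoriginnwe} is obtained exactly as in item \eqref{meancontorigin} of Theorem \ref{thm-existence-ss-NSE}: the bound $\psi(E(U(t)))\leq \psi(0)+\psi'(\xi)E(U(t))$ combined with \eqref{eqestnormU} provides a $\rho$-integrable majorant independent of $t$, while the strong continuity at $t_0$ (item \eqref{hypwsv} of Definition \ref{hypweaksol}) gives $\psi(E(U(t)))\to\psi(E(U(t_0)))$ $\rho$-almost everywhere, so the Lebesgue Dominated Convergence Theorem yields the limit.

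**Main obstacle.** The step requiring the most care is the strengthened pointwise energy inequality underlying \eqref{existssivnwe}, i.e.\ upgrading the integrated energy inequality \eqref{hypenergyineq} to the distributional derivative form $\frac{\rd}{\rd t}\psi(E(U(t)))\leq \psi'(E(U(t)))\langle\check G(t),U(t)\rangle_{V',V}$. Because the energy $E$ now contains the nonsmooth term $\tfrac1p|u|_{L^p}^p$ rather than a pure Hilbert-space norm, the mollification-and-limit argument sketched for Proposition \ref{prop-strenghtened-energy-ineq} is more delicate; one must be careful that composing with $\psi$ preserves the inequality in the distribution sense, which is where the monotonicity ($\psi$ nondecreasing) and the sign of $\psi'$ are essential. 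Everything else is a direct transcription of the Navier-Stokes argument.
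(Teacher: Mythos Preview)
Your proposal is correct and follows essentially the same approach as the paper: the paper's own proof simply states that the argument proceeds as in Theorem \ref{thm-existence-ss-NSE} with the choices $X=V_\rw$, $Z=Y=V$, $F(t,U)=G(t)-AU-N(U)$, $\alpha(t,U)=\psi(E(U))$, and $\beta(t,U)=-\psi'(E(U))\langle \check{G}(t),U\rangle_{V',V}$, which is exactly the identification you make. Your write-up is in fact more detailed than the paper's, and your flagged ``main obstacle'' (the strengthened individual energy inequality for $\psi\circ E$) is a legitimate technical point that the paper likewise leaves implicit under the phrase ``by arguments similar to the ones used in Theorem \ref{thm-existence-ss-NSE}.''
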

\begin{proof}
The proof follows by arguments similar to the ones used in Theorem \ref{thm-existence-ss-NSE}, considering $X=V_\rw$, $Z=Y=V$ and the functions $F:I\times V\rightarrow V'$, $\alpha:I\times V_\rw \rightarrow \mathds{R}$ and $\beta:I\times V \rightarrow\mathds{R}$ defined respectively as
\[F(t,U)=G(t)-AU(t)-N(U(t)),
\]
\[\alpha(t,U)=\psi(E(U(t))),
\]
and
\[\beta(t,U)= -\psi'(E(U(t)))\langle \check{G}(t),U(t)\rangle_{V',V},
\]
for every $(t,U)$ in the corresponding domains.
\end{proof}

\section*{Acknowledgments}
The authors would like to thank Professors Dinam\'erico Pombo, for discussions about general topology, Nilson Bernardes Jr., for discussions related to the results considered in Section \ref{subsecinjectborel}, Dario Darji for discussions pertaining to the Pettis integral, and Fabio Ramos, for bringing to our attention the works of Topsoe. The last author, R. Rosa, is also greatly indebted to Professors Roger Temam and Ciprian Foias, for their continued mentoring and support and for being inspirations for his work. The three authors also acknowledge the financial support of CNPq-Brazil.

\end{document}